\newtheorem{theorem}{Theorem}
\newtheorem{lemma}[theorem]{Lemma}
\newtheorem{corollary}[theorem]{Corollary}
\newtheorem{proposition}[theorem]{Proposition}
\newtheorem{conjecture}[theorem]{Conjecture}
\numberwithin{theorem}{section}
\theoremstyle{definition}
\newtheorem{example}[theorem]{Example}
\newtheorem{definition}[theorem]{Definition}
\theoremstyle{remark}
\newtheorem{remark}[theorem]{Remark}
\newtheorem{observation}[theorem]{Observation}
\newcommand{\I}{\mathcal{I}}
\newcommand{\R}{\mathbb{R}}
\newcommand{\conv}{\textup{conv}}
\renewcommand{\span}{\textup{span}}
\newcommand{\im}{\textup{image }}
\newcommand{\hf}{f^h}
\newcommand{\hH}{\hat{H}}
\newcommand{\hx}{\hat{x}}
\newcommand{\hy}{\hat{y}}
\renewcommand{\P}{\mathcal{P}}
\newcommand{\blue}[1]{{\color{black} #1}}
\title{Aggregations of quadratic inequalities and  hidden hyperplane convexity}
\author{Grigoriy Blekherman}\address[Grigoriy Blekherman]{School of Mathematics, Georgia Institute of Technology, Atlanta, GA, USA}\email{greg@math.gatech.edu}
\author{Santanu S. Dey}\address[Santanu S. Dey]{School of Industrial and Systems Engineering, Georgia Institute of Technology, Atlanta, GA, USA}\email{santanu.dey@isye.gatech.edu}
\author{Shengding Sun}\address[Shengding Sun]{School of Mathematics, Georgia Institute of Technology, Atlanta, GA, USA}\email{ssun313@gatech.edu}
\date{\today}
\thanks{Grigoriy Blekherman and Shengding Sun were partially supported by NSF grant DMS-1901950.}
\begin{document}
	\maketitle

	\begin{abstract}
		
		We study properties of the convex hull of a set $S$ described by quadratic inequalities. A simple way of generating inequalities valid on $S$ is to take nonnegative linear combinations of the defining inequalities of $S$. We call such inequalities \emph{aggregations}. Special aggregations naturally contain the convex hull of $S$, and we give sufficient conditions for intersection of such aggregations to define the convex hull. We introduce the notion of hidden hyperplane convexity (HHC), which is related to the classical notion of hidden convexity of quadratic maps. We show that if the quadratic map associated with $S$ satisfies HHC, then the convex hull of $S$ is defined by special aggregations. 
		To the best of our knowledge, this result generalizes all known results regarding aggregations defining convex hulls. Using this sufficient condition, we are able to recognize previously unknown classes of sets where aggregations lead to convex hull. We show that the condition known as positive definite linear combination for every triple of inequalities, together with hidden hyperplane convexity is sufficient for finitely many aggregations to define the convex hull, answering a question raised in~\cite{dey2021obtaining}. All the above results are for sets defined using open quadratic inequalities. For closed quadratic inequalities, we prove a new result regarding aggregations giving the convex hull, without topological assumptions on $S$, which were needed in~\cite{ModaresiV2015,dey2021obtaining}. 
		
		
	\end{abstract}

	\section{Introduction}
	The well-known Farkas lemma in linear programming states that any implied linear inequality for a non-empty set defined by finitely many linear inequalities, can be obtained by taking a nonnegative weighted combination of the original inequalities. We call the procedure of obtaining implied inequalities for a given set by rescaling the defining constraints by nonnegative weights and then adding the scaled constraints together as \emph{aggregation}. Aggregations have also been studied in the context of integer linear programming (for example,~\cite{bodur2018aggregation}) and mixed-integer nonlinear programming (for example,~\cite{gleixner2020generalized}) to obtain better cutting-planes or improved dual bounds. In this paper, we extend the study of aggregation~\cite{yildiran2009convex,Burer2016,ModaresiV2015,dey2021obtaining} in the context of quadratic constraints. While sets defined by linear inequalities are always convex, sets defined by quadratic inequalities are usually not, and we address the question of when the convex hull can be found via aggregation.
	
	Let $S\subseteq \mathbb{R}^n$  be a set defined by finitely many quadratic constraints. Since we are interested in finding the convex  hull of $S$, it makes sense to consider only ``good aggregations'', which have at most one negative eigenvalue, so that the set defined by the aggregated constraint has at most two connected components that are both convex, and furthermore contains the convex hull in one of its connected components. It is known that the convex hull of $S$ is always described by intersection of these ``good aggregations'' in the case where $S$ is defined by two quadratic constraints \cite{yildiran2009convex}. In fact, the convex hull of a set defined by two quadratic inequalities can be obtained as the intersection of two good aggregations. 
	Henceforth, for simplicity, if it is clear from context we will drop the term ``good" and refer to good aggregation as aggregation. 
	
	The paper~\cite{dey2021obtaining} extended the result of~\cite{yildiran2009convex} to the case of a set $S$ defined using three quadratic constraints, by showing that under an additional condition called \emph{positive definite linear combination} (PDLC), the convex hull of $S$ is obtained as the intersection of good aggregations. They also show via examples that if PDLC does not hold, then the convex hull of $S$ may not be given by good aggregations. 
	
	A key ingredient of the result in~\cite{yildiran2009convex} is the S-lemma~\cite{polik2007survey}. The main use of the PDLC condition in \cite{dey2021obtaining} is also to prove a version of the homogeneous S-lemma for three  quadratics under PDLC. The result is an application of Calabi's convexity theorem \cite{calabi1982linear,polyak1998} which states that under PDLC the image of $\mathbb{R}^n$ by three quadratic forms is a closed convex cone in $\R^3$. Convexity of the image of quadratic maps is a classical mathematical problem in convex geometry and real algebraic geometry, dating back to Dines' theorem \cite{dines} and Brickman's theorem \cite{brickman}. Yakubovich's S-lemma (also called S-procedure) \cite{yakubovich1977} connects this problem to the realm of polynomial optimization. Since then the notion of hidden convexity has become a powerful tool to study quadratic programming \cite{polyak1998,hiriart2002backnforth}. See \cite{sheriff2013convexity} for a mathematical treatment of convexity of quadratic maps and \cite{polik2007survey} for a survey of the S-lemma. 
	
	The main goals of this paper are three-fold:
	\begin{itemize}
		\vspace{-.2cm}
		\item General sufficient conditions for aggregations to yield convex hull:  We establish a new sufficient condition for aggregations to define the convex hull, which we call \emph{hidden hyperplane convexity (HHC).} To the best of our knowledge, hidden hyperplane convexity gives the most general result on convex hull of a region defined by quadratic inequalities being given by aggregations. In particular, we simultaneously generalize the results of \cite{yildiran2009convex} and \cite{dey2021obtaining}, which deal with two and three quadratic inequalities respectively. Furthermore, we give new examples of sets described by more than $3$ quadratic inequalities where convex hull is given by aggregations. We show that hidden hyperplane convexity is a stronger requirement than hidden convexity, and in order for the convex hull of a set defined by quadratic inequalities to be given by aggregations, hidden convexity is not sufficient while HHC is not necessary.\\

		
		\item Finiteness of aggregations: While \cite{yildiran2009convex} shows that only two good aggregations suffice to define the convex hull in the case of sets described by two quadratic inequalities, the paper~\cite{dey2021obtaining} only shows that the intersection of good aggregations yields the convex hull --leaving the question of whether only a finite number of good aggregations are sufficient to obtain the convex hull for three quadratic constraints satisfying PDLC, as an open problem. We answer this question in the affirmative in this paper, and we show that six aggregations suffice to describe the convex hull for three quadratic constraints satisfying PDLC. Furthermore, we establish a more general sufficient condition for finiteness of aggregations in Theorem~\ref{thm:finite}. \\  

		\item Closed quadratic inequalities: All of the above results are for the case of open quadratic inequalities, whereas typically in mathematical programming we are interested in sets defined by closed quadratic inequalities. The situation with closed inequalities is much more delicate, as we illustrate in Example \ref{example:closed}. Much of the difficulty comes from the fact that sets defined by closed inequalities can have low-dimensional connected components. Previously known results make topological assumptions to avoid this situation. 
		In particular, it was shown in~\cite{ModaresiV2015} that if the set defined by closed inequalities has no lower-dimensional connected components, then the closures of convex hulls of sets defined by closed or open inequalities are the same; this allows us to transfer results from the open case to the closed case, under a topological assumption. Unfortunately, this assumption is hard to check computationally, and it does not hold in some interesting cases. 
		We show that if hidden hyperplane convexity holds, and the zero matrix is not a non-trivial aggregation, then the interior of the convex hull of the set given by closed quadratic inequalities is equal to the interior of the intersection of aggregations (Theorem \ref{prop:agg_interior}). While these are restrictive conditions, they do not make topological assumptions on the set defined by closed inequalities. 
	\end{itemize}
	
	The rest of the papers is organized as follows: In Section~\ref{sec:2} we present all our main results. In particular, in Section~\ref{sec:pre} we establish notation and preliminary results followed by Section~\ref{sec:HHC0}-Section~\ref{sec:closed0} where all the results are stated and explained. Section~\ref{sec:conclusion} presents conclusions and open questions. 
	Proofs of the results presented in Section~\ref{sec:2} are given in Sections \ref{sec:HHC}-\ref{sec:closed}.
	
	\section{Main Results}\label{sec:2}
	\subsection{Notations and preliminaries}\label{sec:pre}
	
	


	Given a positive integer $n$, we let $[n]$ denote the set $\{1, \dots, n\}$. Given a set $U \subseteq \mathbb{R}^n$, we use $\textup{dim}(U)$, $\conv(U)$, $\textup{int}(U)$, $\overline{U}$, and $\partial U$ to represent the dimension of $U$, the convex hull of $U$, the standard topological interior of $U$, the standard topological closure of $U$, and the boundary of the set $U$ respectively. Given a linear subspace $L$ of $\mathbb{R}^n$, we denote its orthogonal complement by $L^{\perp}$. For a square matrix $M$, we use $\textup{det}(M)$ to denote the determinant of $M$. We use $I_n$ to denote the $n\times n$ identity matrix and $e_i$ for the  $i$-th standard basis vector.
	
	Our main goal is to study sets defined by multiple open quadratic constraints:
	\begin{eqnarray}\label{eq:sdefn}
		S:= \{ x \in \mathbb{R}^n: x^{\top}A_i x +2b_i^{\top}x + c_i < 0 , i \in [m] \},
	\end{eqnarray}
	where $m \geq 2$ and $n \geq 3$. We also use the following notation:
	
	\begin{enumerate}
		\item Let $f_i(x)=x^{\top}A_i x +2b_i^{\top}x + c_i $ be quadratic functions defining (\ref{eq:sdefn}) and $Q_i=\begin{bmatrix}
			A_i & b_i\\b_i^{\top} & c_i
		\end{bmatrix}$ the corresponding matrices. We define \emph{homogenization} $f_i^h$ of $f_i$ to be the quadratic form given by $f^h_i(x,x_{n+1})=(x,x_{n+1})^\top Q_i (x,x_{n+1})$. \\
		
		\item The homogenized set $S^h$: $$S^h := \left\{(x, x_{n +1}) \in \mathbb{R}^{n}\times \mathbb{R}^{1}: x^{\top}A_ix + 2(b_i^{\top}x)x_{n+1}+ c_i x_{n+1}^2 < 0, i \in [m]\right\}.$$
		\smallskip

		\item The aggregation of constraints $S_{\lambda}$ and its homogenization $(S_{\lambda})^h$. For $\lambda \in \mathbb{R}^m_+$, we let $$Q_\lambda=\sum_{i=1}^m \lambda_iQ_i \hspace{.4cm} \text{and} \hspace{.4cm} F_\lambda=\sum_{i=1}^m \lambda_if_i$$ be the aggregated matrices and quadratic functions. Additionally we define:

		\begin{equation*}
			\begin{aligned}
				S_{\lambda} := \{ x \in \mathbb{R}^n  &  :  F_\lambda < 0 \}, \\
				(S_{\lambda})^h  := \{ (x,x_{n+1}) \in \mathbb{R}^{n+1} & :  F_\lambda^h(x,x_{n+1}) < 0 \}.
			\end{aligned}
		\end{equation*}
		Observe that $S\subseteq S_{\lambda},S^h\subseteq (S_{\lambda})^h$ for any nonzero $\lambda \in\R^m_+$. \\ 

		\item Let $$\Omega = \{\lambda \in \mathbb{R}^m_+\setminus\{0\} : \conv(S)\subseteq S_{\lambda} \textup{ and }Q_{\lambda} \textup{ has at most one negative eigenvalue.}\}$$
		
		Informally, $\Omega$ is the set of ``good" aggregations where $S_{\lambda}$ consists of one or two convex connected components, and $\conv(S)$ lies entirely in one of them. We will formally state and prove this equivalence in Lemma~\ref{lem:Omega} in Section~\ref{sec:more_quadratics}. \\ 
		
		\item Positive definite linear combination (PDLC): Given a set of  symmetric matrices $Q_1$, \dots $Q_m$, we say they satisfy PDLC if $\sum_{i = 1}^m \theta_i Q_i\succ 0$ holds for some $\theta\in\R^m$. \\
	\end{enumerate}
	
	The cases of $m=2$ and $m=3$ are studied in \cite{yildiran2009convex} and \cite{dey2021obtaining} respectively. If $S\ne\emptyset$ and $\conv(S)\ne\R^n$, then in the case of two quadratic inequalities the convex hull is always given by aggregations in $\Omega$, and in the case of three quadratic inequalities we need the additional PDLC condition. Notice that in the case of two quadratic inequalities, by taking $Q_3=-I$ the latter result implies the former one\footnote{If any aggregation uses a non-zero weight on the quadratic constraint corresponding to $-I$, then we can obtain a tighter aggregated constraint by setting the weight on this constraint to zero.}. The author of \cite{yildiran2009convex} also proved that for two quadratic inequalities, aggregations also give certificates when $S=\emptyset$ or $\conv(S)=\R^n$.  Moreover, \cite{yildiran2009convex} also showed at most two good aggregations suffice to define the convex hull.

	
	

	
	
	
	\subsection{Hidden hyperplane convexity}\label{sec:HHC0}
	We call a map  $\varphi: \mathbb{R}^n \rightarrow \mathbb{R}^m$ a quadratic map, if there exist $m$ symmetric matrices $Q_1, \dots Q_m$ such that:
	$$\varphi(x) = \left(x^{\top}Q_1x, \dots, x^{\top}Q_m x\right) \ \textup{for all } x \in \mathbb{R}^n.$$ 
	A quadratic map $\varphi:\mathbb{R}^n \to \mathbb{R}^m$ satisfies \emph{hidden convexity} if $\textup{image} \left( \varphi \right) = \{\varphi(x):x\in \R^n\}\subseteq \mathbb{R}^m$ is convex. We say that $n\times n$ symmetric matrices $Q_1,\ldots,Q_m$ satisfy hidden convexity if the map $x\mapsto (x^{\top}Q_1 x,\ldots,x^{\top}Q_m x)$ satisfies hidden convexity. 
	
	
	We now introduce a new notion of \emph{hidden hyperplane convexity} of quadratic maps, which will be our key assumption in proving that the convex hull of a set defined by quadratic inequalities is given by aggregations of these inequalities. We say $H \subseteq \R^n$ is a linear hyperplane if $H$ is a linear subspace of $\R^n$ with $\textup{dim}(H) = n -1$.
	
	\begin{definition}[Hidden hyperplane convexity (HHC)]
		A quadratic map $\varphi:\mathbb{R}^n\to \mathbb{R}^m$ satisfies hidden hyperplane convexity (HHC) if for all linear hyperplanes $H\subseteq \mathbb{R}^n$, 
		$\textup{image} \left( \varphi|_H \right) = \{\varphi(x):x\in H\}\subseteq \mathbb{R}^m$ is a convex set. Let $Q_1,\ldots,Q_m$ be $n\times n$ symmetric matrices. We say $Q_1,\ldots,Q_m$ satisfy HHC if the map $x\mapsto (x^{\top}Q_1 x,\ldots,x^{\top}Q_m x)$ satisfies HHC. 
	\end{definition}
	
	We present some properties of hidden hyperplane convexity. 
	
	\blue{
		\begin{remark}
			If the matrices $Q_i$ are linearly independent, then we must have $n\geq m$ for hidden convexity, and $n-1 \geq m$ for hidden hyperplane convexity. For hidden convexity, suppose $n<m$, and consider the span of the image of the quadratic map $\varphi$. Since the image is convex, the span has the same dimension as the image, and it is at most $n$. This means for all $x\in \R^n$ we have there exist $\lambda_i$, not all zero, such that $\sum \lambda_ix^{\top}Q_i x=\sum x^{\top}\lambda_iQ_i x=0$, and therefore $Q_i$ are linearly dependent. Contradiction. 
			
			For hidden hyperplane convexity, suppose $n-1<m$, and consider the span of the image of $\varphi$ restricted to a hyperplane. For a general hyperplane $H$ the restrictions of $Q_i$ to $H$ will be linearly independent and then the argument is same as for hyperplane convexity.

			

		More generally, we must have $\dim (\operatorname{span} \{Q_1,\dots,Q_m\}) \leq n$ (resp. $n-1$) for hidden convexity (resp. hyperplane hidden convexity) to hold. The proofs are the same as the ones outlined above.
\end{remark}}
We now observe that hidden hyperplane convexity implies hidden convexity.



\begin{observation}
	Hidden hyperplane convexity implies the usual hidden convexity as long as $n\ge 3$. Given $x,y\in \mathbb{R}^n$ we may pick some hyperplane $H$ containing both $x$ and $y$, and the segment between $\varphi(x)$ and $\varphi(y)$ in $\mathbb{R}^m$ is then contained in $\textup{image}(\varphi|_H)\subseteq \im \varphi$.
\end{observation}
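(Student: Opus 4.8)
The plan is to verify convexity of $\im \varphi$ through the standard two-point criterion: it suffices to show that for every pair of points $p,q\in\im\varphi$ the entire segment $[p,q]$ lies in $\im\varphi$. Writing $p=\varphi(x)$ and $q=\varphi(y)$ for some $x,y\in\R^n$, I would reduce the whole problem to producing a single linear hyperplane $H$ that contains both $x$ and $y$. Once such an $H$ is available, HHC guarantees that $\im(\varphi|_H)$ is convex, and since this convex subset of $\R^m$ contains $p=\varphi(x)$ and $q=\varphi(y)$, it contains $[p,q]$, which in turn is contained in $\im\varphi$. That chain of inclusions $[p,q]\subseteq\im(\varphi|_H)\subseteq\im\varphi$ is exactly the displayed line in the statement, and running it over all $p,q$ gives convexity of $\im\varphi$.

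The only step that needs an actual argument is the existence of the hyperplane. Set $L=\span\{x,y\}$, so $\dim L\le 2$. Because $n\ge 3$ we have $\dim L\le 2\le n-1$, so $L$ can be enlarged to an $(n-1)$-dimensional linear subspace $H$ (extend a basis of $L$ to a basis of any $(n-1)$-dimensional subspace of $\R^n$); by definition $H$ is a linear hyperplane, and it contains both $x$ and $y$. The hypothesis of HHC is literally a statement about $\{\varphi(x):x\in H\}\subseteq\R^m$ for each linear hyperplane $H$, so it applies to this $H$ directly.

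I do not expect any genuine obstacle here: the content is entirely the dimension count, and the hypothesis $n\ge 3$ is precisely what allows a codimension-one linear subspace (which, being linear, automatically passes through $0$) to accommodate two prescribed vectors. For completeness one may record the degenerate cases, $x$ or $y$ equal to $0$, or $x$ and $y$ parallel, but these only make $\dim L$ smaller and cause no difficulty. The same bookkeeping also makes transparent why the implication can fail when $n=2$: there a linear hyperplane is a line through the origin, and two arbitrary preimage vectors need not lie on a common such line.
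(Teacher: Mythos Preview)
Your proposal is correct and follows exactly the argument sketched in the observation itself: pick preimages $x,y$, use $n\ge 3$ to fit them into a linear hyperplane $H$, and apply HHC to $H$. The paper gives no further detail beyond what is stated in the observation, so your expansion of the dimension count $\dim\span\{x,y\}\le 2\le n-1$ is just a careful unpacking of the same idea.
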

On the other hand, HHC is a strictly stronger condition than hidden convexity as the next example illustrates.


\begin{example}[Hidden convexity does not imply hidden hyperplane convexity]\label{example:separable}
	Consider a diagonal quadratic map $\varphi: \mathbb{R}^n \rightarrow \mathbb{R}^m$, 
	$\varphi(x) = \left(x^{\top}D_1x, \dots, x^{\top}D_m x\right),$
	where $D_1, \dots, D_m$ are diagonal matrices; such a map is also sometimes referred to as a separable quadratic map. Any diagonal quadratic map $\varphi$ is known to satisfy hidden convexity (see Proposition 3.7 in \cite{polyak1998}), and we include a quick proof here. Given $x,y \in \mathbb{R}^n$ and $\lambda \in [0, 1]$, let $z \in \mathbb{R}^n$ be defined as 
	$$z_j = \sqrt{\lambda x_j^2 + (1 - \lambda)y_j^2}, \ j \in [n].$$ Then it is straightforward to verify that
	$$\lambda\varphi(x) + (1 - \lambda)\varphi(y) = \varphi(z),$$
	that is, $\textup{image} \left( \varphi \right)$ is convex.
	
	On the other hand, we show that a diagonal quadratic map may not satisfy hidden hyperplane convexity: 
	Let $\varphi: \mathbb{R}^4 \rightarrow \mathbb{R}^3$ be given by $f_1=x_1^2$, $f_2=x_2^2$ and $f_3=x_3^2$. The image of $\mathbb{R}^4$ is the non-negative orthant in $\mathbb{R}^3$. Now let's consider restrictions of $\varphi$ to a \blue{linear} hyperplane $H$. \blue{It is clear that in this specific example, $\{\varphi(x):x\in H\}=\{\varphi(x):x\in \pi(H) \}$, where $\pi:\R^4\to \R^3,\pi(x_1,x_2,x_3,x_4)=(x_1,x_2,x_3,0)$ is the  linear projection that forgets the last coordinate.} If $H$ does not contain the vector $(0,0,0,1)$, \blue{then $\pi(H)=\R^3$, and $\{\varphi(x):x\in H\}$} is the non-negative orthant in $\mathbb{R}^3$, and hidden hyperplane convexity on $H$ holds. If $H$ does contain $(0,0,0,1)$, then \blue{the image of $H$ under $\varphi$ may not be convex. For instance, let $H=\textup{span}\{(1,1,0,0),(1,0,1,0),(0,0,0,1)\}=\{(s+t,s,t,u)\subseteq \R^4: s,t,u\in\R\}$. Then $A=\textup{image }\varphi|_H=\{((s+t)^2,s^2,t^2)\subseteq \R^3: s,t\in\R \}$ is not convex, since $(4,1,1),(0,1,1)\in A$ but $(2,1,1)\notin A$. }
	Thus we see that $\varphi$ satisfies hidden convexity, but does not satisfy hidden hyperplane convexity. \blue{This example is interesting in the sense that for a dense subset of linear hyperplanes, the image of this quadratic map restricted to these hyperplanes is convex, and yet this convexity does not hold for all linear hyperplanes.}

\end{example}

We now show that hidden hyperplane convexity is preserved under the following two different operations. 

\begin{lemma}\label{lem:HHC_preserved}
	Suppose that $Q_1,\ldots, Q_m$ satisfy HHC. Then the following matrices also satisfy HHC: 
	
	\begin{enumerate}
		\item $P^{\top}Q_1 P,\ldots,P^{\top}Q_m P$ where $P$ is any invertible matrix. 
		
		\item $Q'_1,\ldots,Q'_k$ where $\span(Q'_1,\ldots,Q'_k)\subseteq \span(Q_1,\ldots,Q_m)$. (Equivalently, there exists a $k\times m$ matrix $\Lambda$ such that $Q'_i=\sum_{j=1}^m \Lambda_{ij}Q_j$ for all $i\in[k]$.)
	\end{enumerate}
\end{lemma}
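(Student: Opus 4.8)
The plan is to prove the two preservation properties separately, since each reduces to a fairly direct manipulation of the definition of HHC.

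\medskip

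\textbf{Part (1): congruence by an invertible matrix $P$.} Fix a linear hyperplane $H\subseteq\mathbb{R}^n$. I want to show $\operatorname{image}(\psi|_H)$ is convex, where $\psi(x)=(x^\top P^\top Q_1 P x,\ldots,x^\top P^\top Q_m P x)$. The key observation is that $\psi(x)=\varphi(Px)$, so $\operatorname{image}(\psi|_H)=\{\varphi(Px):x\in H\}=\{\varphi(y):y\in PH\}=\operatorname{image}(\varphi|_{PH})$. Since $P$ is invertible, $PH=\{Px:x\in H\}$ is again a linear subspace of dimension $n-1$, i.e.\ a linear hyperplane. By the HHC hypothesis applied to the hyperplane $PH$, the set $\operatorname{image}(\varphi|_{PH})$ is convex, and hence so is $\operatorname{image}(\psi|_H)$. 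Since $H$ was arbitrary, $\psi$ satisfies HHC.

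\medskip

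\textbf{Part (2): restriction to a subspace of the span.} Here the matrices $Q'_1,\ldots,Q'_k$ satisfy $Q'_i=\sum_{j=1}^m \Lambda_{ij}Q_j$ for a $k\times m$ matrix $\Lambda$; equivalently, the quadratic map $\varphi'\colon\mathbb{R}^n\to\mathbb{R}^k$ associated to the $Q'_i$ factors as $\varphi'=\Lambda\circ\varphi$, where $\Lambda\colon\mathbb{R}^m\to\mathbb{R}^k$ is viewed as a linear map. Fix a linear hyperplane $H\subseteq\mathbb{R}^n$. Then $\operatorname{image}(\varphi'|_H)=\Lambda\bigl(\operatorname{image}(\varphi|_H)\bigr)$, which is the image of a convex set (by the HHC hypothesis on the $Q_i$) under a linear map, hence convex. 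Since $H$ was arbitrary, $\varphi'$ satisfies HHC. The parenthetical equivalence in the statement — that $\operatorname{span}(Q'_1,\ldots,Q'_k)\subseteq\operatorname{span}(Q_1,\ldots,Q_m)$ is the same as the existence of such a $\Lambda$ — is just the standard linear-algebra fact that membership in a span is witnessed by a choice of coefficients, applied to each $Q'_i$ in turn.

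\medskip

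I do not anticipate a genuine obstacle here: both parts are ``the image of a convex set under an affine/linear operation is convex,'' combined with the observation that an invertible change of variables permutes linear hyperplanes among themselves. The only point that requires a moment's care is making sure, in Part (1), that $PH$ is still a \emph{linear} hyperplane (it passes through the origin because $P\zero=\zero$, and it has dimension $n-1$ because $P$ is invertible), so that the HHC hypothesis genuinely applies to it; and in Part (2), that the factorization $\varphi'=\Lambda\circ\varphi$ holds pointwise, which is immediate from bilinearity: $x^\top Q'_i x=\sum_j\Lambda_{ij}\,x^\top Q_j x$.
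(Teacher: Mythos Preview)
Your proof is correct and follows essentially the same approach as the paper: in Part~(1) you reduce to $\operatorname{image}(\varphi|_{PH})$ for the hyperplane $PH$, and in Part~(2) you observe that $\operatorname{image}(\varphi'|_H)=\Lambda(\operatorname{image}(\varphi|_H))$ is a linear image of a convex set. (The paper writes the new hyperplane as $P^{-1}H$ rather than $PH$, but either way it is a linear hyperplane and the argument goes through identically.)
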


\begin{proof}
	Let $H$ be any hyperplane in $\R^n$. For the first statement, we have $$U:= \{(x^{\top}P^{\top}Q_1 Px,\ldots,x^{\top}P^{\top}Q_m Px):x\in H\}=\\ \{(x^{\top}Q_1 x,\ldots,x^{\top}Q_m x):x\in H'\},$$ where $H'=\blue{\{Px:x\in H\}=PH}$ is also a hyperplane in $\R^n$. Thus, by HHC of $Q_1,\ldots, Q_m$ the set $U$ is also convex. 
	
	For the second statement, since $\span(Q'_1,\ldots,Q'_k)\subseteq \span(Q_1,\ldots,Q_m)$, there exists a $k\times m$ matrix $\Lambda$ such that $Q'_i=\sum_{j=1}^m \Lambda_{ij}Q_j$ for all $i\in[k]$. Then we have $\{(x^{\top}Q'_1 x,\ldots,x^{\top}Q'_k x):x\in H\}=\Lambda\{(x^{\top}Q_1 x,\ldots,x^{\top}Q_m x):x\in H\}$, which is convex since convexity is preserved under linear transformations. 
\end{proof}
The above result is important, especially (2), since it shows that hidden \blue{hyperplane} convexity is a property of linear subspaces of the space of symmetric matrices rather than a property that holds for some arbitrary subset of quadratic maps.

Our next observation is that hidden hyperplane convexity can be formulated with matrices. Let $\varphi:\mathbb{R}^n\to \mathbb{R}^m$ be a quadratic map. Let $H$ be any hyperplane of $\R^n$ and the columns of the matrix $W_H \in \mathbb{R}^{n\times (n-1)}$ be any basis for $H$. Then note that:
\begin{eqnarray*}
	\textup{image}(\varphi|_H) &=& \{ (x^{\top}Q_1x, \dots, x^{\top}Q_mx\blue{)}: x\in H\} \\
	&=& \{ (y^{\top}W_H^{\top}Q_1 W_Hy, \dots, y^{\top}W_H^{\top}Q_m W_Hy): y\in \blue{\mathbb{R}^{n-1}}\} \\
	&=& \textup{image}(\varphi(H)),
\end{eqnarray*}
where $\varphi(H): \blue{\mathbb{R}^{n-1} \rightarrow \mathbb{R}^m}$ is the \blue{quadratic} map: $y \rightarrow  (y^{\top}W_H^{\top}\blue{Q_1} W_Hy, \dots, y^{\top}W_H^{\top}\blue{Q_m} W_Hy)$. On the other hand, the columns of any full rank $n\times (n-1)$ matrix form a basis for some linear hyperplane. Thus, we arrive at the following equivalence.
\begin{observation}\label{obs:red} $Q_1,\ldots,Q_m$ satisfy HHC if and only if for all full-rank matrix $W\in\R^{n\times (n-1)}$, \\$W^{\top}Q_1 W,\ldots,W^{\top}Q_m W$ satisfy hidden convexity. 
\end{observation}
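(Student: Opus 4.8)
The plan is to observe that the statement is essentially a reformulation of the definition of HHC by a linear change of coordinates on each hyperplane, so the proof reduces to making precise the dictionary between linear hyperplanes of $\R^n$ and full-rank $n\times(n-1)$ matrices (which is already implicit in the computation displayed just before the statement).

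First I would record that dictionary. Fix any linear hyperplane $H\subseteq\R^n$ and let the columns $w_1,\dots,w_{n-1}$ of a matrix $W\in\R^{n\times(n-1)}$ form a basis of $H$; then $W$ has full rank and $y\mapsto Wy$ is a linear isomorphism from $\R^{n-1}$ onto $H$. Substituting $x=Wy$ gives
\[
\textup{image}(\varphi|_H)=\{(x^{\top}Q_1x,\dots,x^{\top}Q_mx):x\in H\}=\{(y^{\top}(W^{\top}Q_1W)y,\dots,y^{\top}(W^{\top}Q_mW)y):y\in\R^{n-1}\},
\]
which is exactly the image of the quadratic map associated with $W^{\top}Q_1W,\dots,W^{\top}Q_mW$. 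Hence $\textup{image}(\varphi|_H)$ is convex if and only if $W^{\top}Q_1W,\dots,W^{\top}Q_mW$ satisfy hidden convexity. Conversely, any full-rank $W\in\R^{n\times(n-1)}$ has an $(n-1)$-dimensional column space, so $H:=\operatorname{span}\{w_1,\dots,w_{n-1}\}$ is a linear hyperplane to which the same identity applies.

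With this dictionary in hand both implications are immediate. For the forward direction I would assume $Q_1,\dots,Q_m$ satisfy HHC, take an arbitrary full-rank $W\in\R^{n\times(n-1)}$, note its column space $H$ is a linear hyperplane, conclude $\textup{image}(\varphi|_H)$ is convex by HHC, and then invoke the dictionary to get hidden convexity of $W^{\top}Q_1W,\dots,W^{\top}Q_mW$. For the converse I would assume the hidden-convexity condition holds for every full-rank $W$, take an arbitrary linear hyperplane $H$, choose any basis of $H$ as the columns of such a $W$, and read off from the dictionary that $\textup{image}(\varphi|_H)$ is convex, so HHC holds.

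I do not expect a genuine obstacle here; the one point worth a sentence is independence of the chosen basis. Two bases of $H$ differ by an invertible $(n-1)\times(n-1)$ matrix $P$, i.e. $W'=WP$, and since $z=Py$ ranges over all of $\R^{n-1}$ we have $\{(y^{\top}P^{\top}W^{\top}Q_iWPy):y\in\R^{n-1}\}=\{(z^{\top}W^{\top}Q_iWz):z\in\R^{n-1}\}$; so "hidden convexity of $W^{\top}Q_1W,\dots,W^{\top}Q_mW$" depends only on $H$ and coincides with convexity of $\textup{image}(\varphi|_H)$ regardless of the basis, which is what makes the equivalence clean.
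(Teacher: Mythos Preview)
Your proposal is correct and follows essentially the same approach as the paper: both establish the equivalence via the identity $\textup{image}(\varphi|_H)=\{(y^{\top}W^{\top}Q_1Wy,\dots,y^{\top}W^{\top}Q_mWy):y\in\R^{n-1}\}$ for any basis matrix $W$ of $H$, together with the observation that full-rank $n\times(n-1)$ matrices correspond exactly to bases of linear hyperplanes. Your extra remark on basis-independence is a harmless clarification the paper leaves implicit.
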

We obtain the following corollary of the above observation, using the classical hidden convexity theorems of Dines and Calabi.

\begin{corollary}[$m=2$, or $m=3$ with PDLC implies HHC]\label{cor:m23}
	\begin{enumerate}
		\item Let $Q_1,Q_2$ be symmetric matrices of dimension $n\ge 2$. Then $Q_1,Q_2$ satisfy HHC. 
		
		\item Let $Q_1,Q_2,Q_3$ be symmetric matrices of dimension $n\ge 4$. If $Q_1,Q_2,Q_3$ satisfy PDLC, then $Q_1,Q_2,Q_3$ satisfy hidden hyperplane convexity. 
	\end{enumerate}
\end{corollary}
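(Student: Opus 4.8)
The plan is to reduce both statements to the classical hidden convexity theorems of Dines and Calabi by way of Observation~\ref{obs:red}. Recall that Observation~\ref{obs:red} says that $Q_1,\ldots,Q_m$ satisfy HHC if and only if for every full-rank $W\in\R^{n\times(n-1)}$ the restricted matrices $W^{\top}Q_1 W,\ldots,W^{\top}Q_m W$ (symmetric $(n-1)\times(n-1)$ matrices) satisfy ordinary hidden convexity. So in each case I would fix an arbitrary full-rank $W\in\R^{n\times(n-1)}$, set $R_i:=W^{\top}Q_i W$, and check that the hypotheses of the relevant classical theorem hold for $R_1,\ldots,R_m$ viewed as quadratic forms on $\R^{n-1}$; since $W$ is arbitrary, Observation~\ref{obs:red} then delivers HHC.

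For part (1): with $m=2$ and $n\ge 2$ we have $n-1\ge 1$, and Dines' theorem~\cite{dines} asserts that the joint image $\{(y^{\top}R_1 y,\,y^{\top}R_2 y):y\in\R^{n-1}\}$ of any two quadratic forms is convex, with no further hypotheses. Hence $R_1,R_2$ satisfy hidden convexity for every such $W$, and part (1) follows immediately.

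For part (2): with $m=3$ and $n\ge 4$ we have $n-1\ge 3$, which is the dimension required by Calabi's convexity theorem~\cite{calabi1982linear,polyak1998}. The one substantive point is that PDLC is inherited by the restrictions: if $\theta\in\R^3$ satisfies $\sum_i\theta_i Q_i\succ 0$, then
\[
\sum_{i=1}^3 \theta_i R_i \;=\; W^{\top}\Bigl(\sum_{i=1}^3\theta_i Q_i\Bigr)W \;\succ\; 0,
\]
because $W$ has full column rank, so $Wy\ne 0$ for every $y\ne 0$ and thus $y^{\top}W^{\top}M W y=(Wy)^{\top}M(Wy)>0$ whenever $M\succ 0$. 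Therefore $R_1,R_2,R_3$ are three quadratic forms on $\R^{n-1}$ with $n-1\ge 3$ satisfying PDLC, and Calabi's theorem yields that their joint image is a closed convex cone in $\R^3$; in particular it is convex, so $R_1,R_2,R_3$ satisfy hidden convexity. Applying Observation~\ref{obs:red} gives HHC for $Q_1,Q_2,Q_3$.

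I do not expect a serious obstacle here: once Observation~\ref{obs:red} is in place the argument is essentially bookkeeping plus two black-box theorems. The only item that genuinely needs care is the dimension count in part (2): Calabi's theorem really does require at least three variables (for two variables PDLC does not force convexity of the image of three forms --- e.g.\ $x^2,y^2,xy$ on $\R^2$ satisfies PDLC but has non-convex image), which is exactly why the hypothesis is $n\ge 4$ rather than $n\ge 3$; in part (1) Dines' theorem carries no positive-definiteness hypothesis, which is what makes the two-quadratic case of HHC unconditional.
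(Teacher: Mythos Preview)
Your proposal is correct and follows essentially the same approach as the paper's proof: reduce via Observation~\ref{obs:red} to hidden convexity of the restricted matrices $W^{\top}Q_iW$, then invoke Dines for part~(1) and Calabi for part~(2), checking that PDLC passes to the restriction because $W$ has full column rank. Your added remark on why $n\ge 4$ is needed (so that $n-1\ge 3$ for Calabi) is a nice clarification the paper leaves implicit.
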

\begin{proof} 
	\begin{enumerate}
		\item By Observation~\ref{obs:red}, it is sufficient to show that $W^{\top}Q_1 W,W^{\top}Q_2 W$ satisfy hidden convexity for any full-rank matrix $W\in\R^{n\times (n-1)}$. This follows from the classic theorem of Dines~\cite{dines}. 
		
		\item By Observation~\ref{obs:red}, it is sufficient to show that $W^{\top}Q_1 W,W^{\top}Q_2 W,W^{\top}Q_3 W$ satisfy hidden convexity for any full-rank matrix $W\in\R^{n\times (n-1)}$. Since $Q_1,Q_2,Q_3$ satisfy PDLC, there exists $\theta\in\R^3$ such that $\sum_{i = 1}^3 \theta_i Q_i \succ 0$. This implies that
		$$ \sum_{i = 1}^3 \theta_i (W^{\top} Q_i W) \succeq 0.$$
		Moreover since $W$ is full-rank, we have that $W y = 0$ iff $y = 0$. Thus, $\sum_{i = 1}^3 \theta_i (W^{\top} Q_i W) \succ 0$, proving that $W^{\top}Q_1 W,W^{\top}Q_2 W,W^{\top}Q_3 W$ satisfy PDLC. Therefore they satisfy hidden convexity due to a theorem of Calabi~\cite{calabi1982linear}. 
	\end{enumerate}
\end{proof}

In the following theorem we show a non-trivial example of hidden hyperplane convexity with an arbitrary number of quadratic functions. This shows that, while hidden hyperplane convexity is a strong assumption, it can lead to interesting examples of sets defined by quadratic inequalities, where the convex hull is given by aggregations.

\begin{theorem}[Non-trivial example of HHC with more constraints]\label{thm:HHTnontriv}
	Fix integers $n>m+1,m\ge \blue{1}$. Let $\varphi = (f_0,\ldots, f_m)$ where $f_0,\ldots, f_m:\R^n \to \R$ are quadratic forms on $\R^n$ such that $f_0$ is positive definite, and there exists linear form $\ell:\R^n\to\R$ such that for all $1\le i\le m$, $f_i(x)=\ell(x)\ell_i(x)$ for some linear form $\ell_i:\R^n\to\R$. Then $\varphi: \R^n \rightarrow \mathbb{R}^m$ satisfies HHC.
\end{theorem}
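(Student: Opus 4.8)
The plan is to verify HHC directly from the definition, so fix an arbitrary linear hyperplane $H\subseteq\R^n$; we must show $\textup{image}(\varphi|_H)$ is convex. If $\ell|_H\equiv 0$ (equivalently $H=\ker\ell$), then $f_i|_H\equiv 0$ for every $i\in[m]$, so $\textup{image}(\varphi|_H)=[0,\infty)\times\{0\}^m$ since $f_0|_H$ is positive definite --- hence convex. So assume $\ell|_H\not\equiv 0$. Because $f_0|_H$ is positive definite on $H\cong\R^{n-1}$, choose a linear isomorphism $\R^{n-1}\xrightarrow{\ \sim\ }H$ carrying $f_0|_H$ to the standard squared norm (diagonalizing the positive definite form), followed by a rotation making $\ell|_H$ a scalar multiple of a coordinate; since pulling back a product of two linear forms along a linear map is again such a product, after absorbing scalars into the $\ell_i$ the claim reduces to showing that the image of
$$\Phi\colon\R^{n-1}\to\R^{m+1},\qquad \Phi(y_1,y')=\bigl(\,y_1^2+\|y'\|^2,\ b_1y_1^2+y_1M_1(y'),\ \ldots,\ b_my_1^2+y_1M_m(y')\,\bigr)$$
is convex, where $b\in\R^m$ and $M=(M_1,\ldots,M_m)\colon\R^{n-2}\to\R^m$ is linear (here $f_i|_H=\ell|_H\cdot\ell_i|_H$, then split off the coordinate $y_1$).

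Next, dehomogenize: every coordinate of $\Phi$ is a quadratic form and the first is the full squared norm, so $\Phi(v)=\|v\|^2\bigl(1,\psi(v/\|v\|)\bigr)$ for $v\neq 0$ and $\Phi(0)=0$, where $\psi\colon S^{n-2}\to\R^m$ is the restriction to the unit sphere of $v\mapsto\bigl(b_iy_1^2+y_1M_i(y')\bigr)_{i\in[m]}$. Hence $\textup{image}(\Phi)$ is exactly the cone generated by $\{1\}\times\psi(S^{n-2})$, which lies in an affine hyperplane avoiding the origin; by the standard fact that such a cone is convex if and only if its base is, it suffices to prove that $\psi(S^{n-2})$ is convex.

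Writing $v=(t,y')$ with $t=y_1$, we have $\psi(t,y')=t^2b+t\,M(y')$, and I will show $\psi(S^{n-2})$ equals
$$\mathcal C:=\bigl\{\,sb+u\ :\ s\in[0,1],\ u\in\textup{image}(M),\ N(u)\le s-s^2\,\bigr\},$$
where $N$ is the positive definite quadratic form on $\textup{image}(M)$ with $N(Mz)=\|\Pi z\|^2$, $\Pi$ the orthogonal projection onto $(\ker M)^{\perp}$. The inclusion $\psi(S^{n-2})\subseteq\mathcal C$ follows by taking $s=t^2$, $u=tM(y')$ and using $t^2+\|y'\|^2=1$ and $\|\Pi y'\|\le\|y'\|$. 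For $\mathcal C\subseteq\psi(S^{n-2})$: given $(s,u)$ with $s>0$, set $t=\sqrt s$ and $y'=t^{-1}M^{+}u+z_0$ with $z_0\in\ker M$ chosen so that $t^2+\|y'\|^2=1$, which is possible exactly when $N(u)\le s-s^2$, unless $\ker M=0$; but $\ker M=0$ can occur only if $n=m+2$, in which case $n-2=\operatorname{rank}(M)=m$ forces $\textup{image}(M)=\R^m$, and the remaining interior points are recovered by an intermediate-value argument --- replace $(s,u)$ by $\bigl(s',\,u+(s-s')b\bigr)$ and let $s'$ increase from $s$ to $1$, so that $N\bigl(u+(s-s')b\bigr)-\bigl(s'-(s')^2\bigr)$ passes from negative to nonnegative and produces a boundary pair with $N(u')=s'-(s')^2$ representing the same point. (In this degenerate case one can instead describe $\psi(S^{n-2})$ explicitly, after a change of coordinates, as the sublevel set of a function of the form $w\mapsto c\|w\|-(\text{affine in }w)$ with $c>0$, which is visibly convex.) Finally $\mathcal C$ is convex, being the image under the linear map $(s,u)\mapsto sb+u$ of the set $\{(s,u)\in\R\times\textup{image}(M):N(u)+s^2-s\le 0\}$ --- a sublevel set of the jointly convex function $(s,u)\mapsto N(u)+s^2-s$ (and that inequality already forces $s\in[0,1]$). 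Tracing back the reductions, this proves the theorem.

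I expect the main obstacle to be the identification $\psi(S^{n-2})=\mathcal C$, and within it the degenerate case $\ker M=0$ (that is, $n=m+2$ with $M$ invertible), where the naive parametrization reaches only the boundary of $\mathcal C$ and one must fill in the interior via the intermediate-value re-parametrization (or the explicit inequality description). The hypothesis $n>m+1$ enters precisely here: it guarantees $\dim(H\cap\ker\ell)=n-2\ge m$, so whenever $\ker M=0$ the map $M$ is automatically onto $\R^m$ --- which is what makes the re-parametrization legitimate; when $n=m+1$ this can fail and $\psi(S^{n-2})$ need not be convex.
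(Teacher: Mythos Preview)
Your argument is correct, but it follows a substantially different route from the paper's.

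The paper factors the map through a single ``universal'' quadratic map. Concretely, on a $k$-dimensional space one considers $x\mapsto(\|x\|^2,x_1x_1,x_1x_2,\ldots,x_1x_k)$ and shows its image is exactly the boundary of a second-order cone in $\R^{k+1}$ (Proposition~\ref{prop:bd_soc}). Any map of the form $(\,\|x\|^2,\,x_1\ell_1(x),\ldots,x_1\ell_m(x)\,)$ is then a linear projection of this universal image, and the projection has nontrivial kernel precisely because the $\ell_j$ cannot span (here the dimension count $n-1>m$ enters). A short lemma (Lemma~\ref{lem:proj_bdcvx}) then says that pushing the boundary of a full-dimensional closed pointed convex set through a linear map with nontrivial kernel recovers the image of the entire set, hence is convex. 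After this, the theorem follows by one more change of basis (Proposition~\ref{prop:cvx_quadforms}) and restriction to $H$.

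Your approach instead dehomogenizes and explicitly parametrizes the spherical image $\psi(S^{n-2})$ as the set $\mathcal C$, which you then recognize as a linear image of a sublevel set of a convex quadratic. The verification that $\psi(S^{n-2})=\mathcal C$ is where your argument has to work hardest: when $\ker M\neq 0$ you can absorb slack into a kernel vector, but when $\ker M=0$ (which, as you note, forces $n=m+2$ and $M$ bijective) you need the intermediate-value re-parametrization along the fiber $\{(s',u'):s'b+u'=sb+u\}$. That step is correct, though the parenthetical alternative description (``sublevel set of $c\|w\|-(\text{affine})$'') is stated too loosely to stand on its own.

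The trade-off: the paper's approach is shorter and structurally uniform --- the ``nontrivial kernel'' condition swallows your degenerate case, because the relevant kernel is $\mathcal L^\perp$ (dimension $\ge n-1-m\ge 1$) rather than your $\ker M\cong\mathcal L^\perp\cap\{y_1=0\}$, which can collapse to zero. Splitting off the $y_1$ coordinate early is what creates the case split. On the other hand, your route gives an explicit closed-form description of the image, which the paper's argument does not directly provide.
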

A proof of Theorem~\ref{thm:HHTnontriv} is presented in Section~\ref{sec:HHC}.

\subsection{Hidden hyperplane convexity and obtaining convex hull from aggregations}\label{sec:convex0}

Let $f_1,\ldots,f_m$ be $m$ (inhomogeneous) quadratic functions $f_i(x)=x^{\top}A_ix+2b_i^{\top}x+c_i$, and let $\hf_i$ be their homogenizations $\hf_i(x,x_{n+1})=x^{\top}A_ix+2(b_i^{\top}x)x_{n+1}+c_ix_{n+1}^2 = (x,x_{n +1})^{\top} Q_i(x, x_{n +1}) $. We denote by $\hf=(\hf_1,\ldots,\hf_m)$ the associated homogeneous quadratic map from $\R^{n+1}$ to $\R^m$.  

Our main result of this section states that the convex hull of the set $S$ defined by $f_i$ is given by aggregations if the associated quadratic map has hidden hyperplane convexity.

	
	\begin{theorem}\label{thm:more_quadratics}
		Let $n \geq3$ and $f_i:\mathbb{R}^n \rightarrow \mathbb{R}$ be the functions $f_i(x)=x^{\top}A_ix+2b_i^{\top}x+c_i,i\in [m]$. Let $S=\{x\in\R^n:f_i(x)<0,i\in [m]\}$. Suppose that the associated quadratic map $\hf$ satisfies the hidden hyperplane convexity. If $S\ne\emptyset$ and $\conv(S)\ne\R^n$, then $$\conv(S)=\bigcap_{\lambda\in \Omega} S_{\lambda}.$$
	\end{theorem}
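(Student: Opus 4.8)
The plan is to prove the two inclusions separately. The inclusion $\conv(S)\subseteq\bigcap_{\lambda\in\Omega}S_\lambda$ is immediate, because every $\lambda\in\Omega$ satisfies $\conv(S)\subseteq S_\lambda$ by the very definition of $\Omega$; all the content is in the reverse inclusion. For that, I would fix an arbitrary point $\bar x\in\R^n\setminus\conv(S)$ and exhibit a \emph{single} $\lambda\in\Omega$ with $F_\lambda(\bar x)\ge 0$, i.e.\ $\bar x\notin S_\lambda$. As a by-product this also shows $\Omega\neq\emptyset$, which is needed since $\conv(S)\neq\R^n$ rules out the trivial empty intersection.

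First I would separate. Since $S$ is open and nonempty, $\conv(S)$ is a nonempty open convex set different from $\R^n$, so there is $a\in\R^n\setminus\{0\}$ and $\beta:=a^{\top}\bar x$ with $a^{\top}x<\beta$ for all $x\in S$ (pass a supporting hyperplane through $\bar x$ when $\bar x\in\partial\conv(S)$, or translate a strictly separating hyperplane onto $\bar x$ otherwise; openness of $\conv(S)$ makes the inequality strict). Next I would pass to $\R^{n+1}$ and set $H:=\{(x,x_{n+1})\in\R^{n+1}:a^{\top}x=\beta x_{n+1}\}$, a linear hyperplane with $(\bar x,1)\in H$. The key claim is $S^h\cap H=\emptyset$: if $(x,x_{n+1})\in S^h$ with $x_{n+1}\neq 0$ then $x/x_{n+1}\in S$, so $a^{\top}(x/x_{n+1})<\beta$, which contradicts membership in $H$; and if $(d,0)\in S^h$ then $d^{\top}A_id<0$ for all $i$ with $d\neq 0$, so $f_i(\bar x+td)\to-\infty$ as $t\to+\infty$, hence $\bar x+td\in S$ for large $t$, forcing $a^{\top}(\bar x+td)<\beta$ and thus $a^{\top}d<0$, so $(d,0)\notin H$. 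Equivalently, the restricted image $\hf(H)=\{(\hf_1(z),\dots,\hf_m(z)):z\in H\}\subseteq\R^m$ misses the open negative orthant $\{p\in\R^m:p_i<0\text{ for all }i\}$.

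Now I would invoke HHC: $\hf(H)$ is convex, and since the $\hf_i$ are quadratic forms and $H$ is a linear subspace, $\hf(H)$ is in fact a convex cone containing the origin. Separating this convex cone from the disjoint open convex cone $\{p:p_i<0\ \forall i\}$ yields $\lambda\in\R^m\setminus\{0\}$ with $\ip{\lambda}{v}\ge 0$ for all $v\in\hf(H)$; testing against points of the negative orthant forces $\lambda\ge 0$. Thus $\lambda\in\R^m_+\setminus\{0\}$ and $F^h_\lambda(z)\ge 0$ for all $z\in H$; since $(\bar x,1)\in H$ this gives $F_\lambda(\bar x)=F^h_\lambda(\bar x,1)\ge 0$, so $\bar x\notin S_\lambda$. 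It remains to check $\lambda\in\Omega$. Because $F^h_\lambda|_H\succeq 0$ and $H$ has codimension $1$ in $\R^{n+1}$, Cauchy interlacing shows $Q_\lambda$ has at most one negative eigenvalue, and in fact exactly one (otherwise $Q_\lambda\succeq 0$ would force $(S_\lambda)^h=\emptyset\supseteq S^h$, contradicting $S\neq\emptyset$). Hence $(S_\lambda)^h=\{z:z^{\top}Q_\lambda z<0\}$ is a union of two disjoint open convex cones $C^h$ and $-C^h$; since $F^h_\lambda\ge 0$ on $H$, the set $(S_\lambda)^h$ is disjoint from $H$, so each of $C^h,-C^h$ lies in one of the two open halfspaces bounded by $H$. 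Every $(x,1)$ with $x\in S\subseteq S_\lambda$ lies in the halfspace $\{a^{\top}x<\beta x_{n+1}\}$, hence all such points lie in a single component, say $C^h$; convexity of $C^h$ then gives $\{(x,1):x\in\conv(S)\}\subseteq C^h\subseteq(S_\lambda)^h$, i.e.\ $\conv(S)\subseteq S_\lambda$. Therefore $\lambda\in\Omega$ and $\bar x\notin S_\lambda$, which proves $\bigcap_{\lambda\in\Omega}S_\lambda\subseteq\conv(S)$.

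The step I expect to be the main obstacle is verifying $\conv(S)\subseteq S_\lambda$ for the $\lambda$ produced: a priori $S_\lambda$ may consist of two convex connected components and $S$ could meet both, which would block $\lambda\in\Omega$. The homogenization is what resolves this, since in $\R^{n+1}$ the hyperplane $H$ cleanly separates the two sheets of $(S_\lambda)^h$, and the strict sign condition $a^{\top}x<\beta$ on $S$ pins $S$ — and hence $\conv(S)$ — into one sheet. The recession subcase $x_{n+1}=0$ in the proof of $S^h\cap H=\emptyset$ is the other delicate point, and it is precisely there that strictness of the separating inequality (rather than a non-strict one) is essential.
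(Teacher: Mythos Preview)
Your proof is correct and follows essentially the same route as the paper's: separate $\bar x$ from $\conv(S)$, lift the separating hyperplane to a linear hyperplane $H\subseteq\R^{n+1}$, show $S^h\cap H=\emptyset$, apply HHC to separate $\hf(H)$ from the negative orthant, and then verify the resulting $\lambda$ lies in $\Omega$. The only cosmetic differences are that you prove $S^h\cap H=\emptyset$ directly (including the recession case $x_{n+1}=0$) where the paper quotes it as Lemma~\ref{lem:homog_separation} from \cite{dey2021obtaining}, and you invoke Cauchy interlacing for the eigenvalue count where the paper appeals to the equivalent statement in Theorem~\ref{thm:SCC}; your observation that $\hf(H)$ is already a cone also lets you skip the paper's explicit argument that $\mu=0$.
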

	
	Our proof follows the same road-map as the proof in \cite{dey2021obtaining} for the case of three quadratics satisfying PDLC. One of the main ingredients of their proof is the homogeneous S-lemma, and we prove a similar result using the hidden hyperplane convexity assumption. See Section~\ref{sec:more_quadratics} for a proof of Theorem~\ref{thm:more_quadratics}.
	
	
	Corollary~\ref{cor:m23} states that HHC is always satisfied if $m=2$ or $m=3$ with PDLC. Thus Theorem~\ref{thm:more_quadratics} together with Corollary~\ref{cor:m23} recovers the main results of~\cite{yildiran2009convex} and~\cite{dey2021obtaining}. In fact, by using Lemma~\ref{lem:HHC_preserved} it is straightforward to see that we can obtain the following slightly more general result than presented in~\cite{yildiran2009convex} and~\cite{dey2021obtaining}. {
	
	\begin{theorem}\label{thm:dep}
		Suppose that $Q_1, \dots, Q_m$ satisfy the following: \begin{itemize}
			\item There exists two indices $i_1, i_2 \in [m]$ such that $Q_1, \dots, Q_m$ belong to the span of $Q_{i_1}, Q_{i_2}$, or, 
			\item There exists three indices $i_1, i_2, i_3 \in [m]$ such that $Q_1, \dots, Q_m$ belong to the span of $Q_{i_1}, Q_{i_2}, Q_{i_3}$ and $Q_{i_1}, Q_{i_2}, Q_{i_3}$ satisfy PDLC. 
		\end{itemize} If $\emptyset \subsetneq \textup{conv}(S) \subsetneq \mathbb{R}^n$, then $\conv(S)$ is given by aggregations, i.e., 
		\blue{$\conv(S) = \bigcap_{\lambda \in \Omega}S_{\lambda}.$}
	\end{theorem}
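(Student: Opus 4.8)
The plan is to derive Theorem~\ref{thm:dep} directly from Theorem~\ref{thm:more_quadratics}; the only work is to check that the associated homogeneous quadratic map $\hf=(\hf_1,\ldots,\hf_m)$ satisfies hidden hyperplane convexity. Recall that the $Q_i$ are $(n+1)\times(n+1)$ symmetric matrices and that, since $n\ge 3$, we have $n+1\ge 4$.

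First I would produce a small ``generating family'' of matrices satisfying HHC, handling the two cases of the hypothesis separately. In the first case, the matrices $Q_{i_1},Q_{i_2}$ have dimension $n+1\ge 2$, so by Corollary~\ref{cor:m23}(1) they satisfy HHC. In the second case, $Q_{i_1},Q_{i_2},Q_{i_3}$ have dimension $n+1\ge 4$ and satisfy PDLC by assumption, so Corollary~\ref{cor:m23}(2) applies and they satisfy HHC. The only point needing care here is bookkeeping on dimensions: the PDLC and HHC statements must be read for these specific $(n+1)\times(n+1)$ matrices, and it is precisely the standing assumption $n\ge 3$ that makes the Calabi-based part of Corollary~\ref{cor:m23} available in the three-matrix case.

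Next I would propagate HHC from this generating family to all of $Q_1,\ldots,Q_m$ using Lemma~\ref{lem:HHC_preserved}(2). In either case the hypothesis states that each $Q_i$ lies in the span of the generating family, i.e. $\span(Q_1,\ldots,Q_m)\subseteq\span(Q_{i_1},Q_{i_2})$ or $\span(Q_1,\ldots,Q_m)\subseteq\span(Q_{i_1},Q_{i_2},Q_{i_3})$. Since the generating family satisfies HHC, Lemma~\ref{lem:HHC_preserved}(2) gives that $Q_1,\ldots,Q_m$ satisfy HHC, i.e. the map $\hf$ satisfies HHC.

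Finally I would invoke Theorem~\ref{thm:more_quadratics}. The assumption $\emptyset\subsetneq\conv(S)\subsetneq\mathbb{R}^n$ is exactly $S\ne\emptyset$ (as $\conv(S)=\emptyset$ iff $S=\emptyset$) together with $\conv(S)\ne\mathbb{R}^n$, so the hypotheses of Theorem~\ref{thm:more_quadratics} are met and we conclude $\conv(S)=\bigcap_{\lambda\in\Omega}S_\lambda$, the desired description by good aggregations. I do not expect any genuine obstacle: the mathematical content is entirely contained in Corollary~\ref{cor:m23}, Lemma~\ref{lem:HHC_preserved}, and Theorem~\ref{thm:more_quadratics}, and the proof is a short chain of reductions, with the only subtlety being the dimension count after homogenization noted above.
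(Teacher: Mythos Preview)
Your proposal is correct and follows exactly the approach the paper indicates: use Corollary~\ref{cor:m23} to get HHC for the generating family $\{Q_{i_1},Q_{i_2}\}$ or $\{Q_{i_1},Q_{i_2},Q_{i_3}\}$, propagate HHC to $Q_1,\ldots,Q_m$ via Lemma~\ref{lem:HHC_preserved}(2), and then apply Theorem~\ref{thm:more_quadratics}. The only cosmetic gap is that Theorem~\ref{thm:more_quadratics} yields $\bigcap_{\lambda\in\Omega}S_\lambda$ while the statement reads $\bigcap_{\lambda\in\Omega_1}S_\lambda$, but these coincide since discarding aggregations with $S_\lambda=\R^n$ does not change the intersection.
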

	
	We next evaluate the ``tightness" of Theorem~\ref{thm:more_quadratics} vis-\'a-vis the hidden hyperplane convexity condition. First via the following example, we show that the weaker condition of hidden convexity is not sufficient for convex hull to be given by aggregations. 
	
	\begin{example}[Hidden convexity is not sufficient]\label{example:hidden_convexity_bad}
		This example is in part inspired by Example 3.4 in \cite{polyak1998}. Let $n\ge 3$ and consider the following three quadratic functions on $\R^n$: $$f_1(x)=x_1^2-x_2^2, \,\,\, f_2(x)=x_1x_2,\,\,\,f_3(x)=-1-(x_1^2-x_2^2)-x_1x_2+\sum_{i=3}^n x_i^2.$$ 
		It is straightforward to verify that $f_1,f_2,f_3$ satisfy hidden convexity, as the image of the associated homogeneous quadratic map is $\R^3$. Observe that PDLC does not hold as the coefficients of $x_1^2$ and $x_2^2$ either have different signs, or are both zero in any linear combination of $f_i$. 
		
		We now show that any good aggregation must be a scalar multiple of $f_1+f_2+f_3$, and the set defined by all good aggregations is $\{x:\sum_{i=3}^n x_i^2<1\}$, which has no restrictions on $x_1$ and $x_2$. Observe that $(-0.1,0.9,0,\ldots,0),(0.5,-0.7,0,\ldots,0)\in S$ and hence their midpoint  $x^*=(0.2,0.1,0,\ldots,0)$ lies in $\conv(S)$. Let $\lambda\ge 0$ be any good aggregation, i.e., $Q_{\lambda}$ has at most one negative eigenvalue and $\conv(S)\subseteq S_{\lambda}$. In particular $\sum_{i=1}^3\lambda_i f_i(x^*)<0$ as $x^*\in\conv(S)$. Since $f_1(x^*)>0,f_2(x^*)>0,f_3(x^*)<0$ we must have $\lambda_3>0$, \blue{which means the bottom right diagonal element of $Q_{\lambda}$ is negative.} Since $Q_{\lambda}$ has at most one negative eigenvalue \blue{and the last row and column are always zero except for the diagonal element (there are no linear terms), the leading $n\times n$ principal submatrix of $Q_{\lambda}$ must be PSD, which means} $\lambda_1=\lambda_2=\lambda_3$. 
		
		On the other hand, the actual convex hull is given by $\{x:x_2^2<1-\sum_{i=3}^n x_i^2,(2x_1+x_2)^2<1-\sum_{i=3}^n x_i^2\}$. Geometrically for any fixed $x_3,\ldots,x_n$ such that $\sum_{i=3}^n x_i^2<1$, the set of feasible $(x_1,x_2)$ lies inside an open parallelogram, with vertices $(-a,a),(0,a),(a,-a),(0,-a)$ where $a=\sqrt{1-\sum_{i=3}^n x_i^2}$. 
	\end{example}
	
	We next ask whether hidden hyperplane convexity is a necessary condition for obtaining the convex hull of a set defined by quadratic inequalities using aggregations. As shown in Example~\ref{example:separable} diagonal quadratic functions may not satisfy HHC. However, in the next result we show that if $S$ is defined by diagonal quadratic inequalities then $\textup{conv}(S)$ is always given by aggregations. See \cite{burer2020exact} for a study of semidefinite relaxations of related sets. 
	
	\begin{theorem}\label{thm:diagonal} (HHC not necessary; Separable quadratic maps) Let $n \geq 2$ and $f_i:\mathbb{R}^n \rightarrow \mathbb{R}$ be the functions $f_i(x)=x^{\top}A_ix+2b_i^{\top}x+c_i,i\in [m]$. Let $S=\{x\in\R^n:f_i(x)<0,i\in [m]\}$. Assume $Q_1, \dots, Q_m$ are diagonal. Then:
		\begin{enumerate}
			\item $S = \emptyset$ if and only if there exists nonzero $\lambda \geq 0$ such that $\sum_{i = 1}^m \lambda_i Q_i \succeq 0$.
			\item If $S\ne\emptyset$, then $\textup{conv}(S) \neq \mathbb{R}^n$ if and only if there exists $\lambda \geq 0$ such that the leading $n \times n $ principal submatrix of $\sum_{i = 1}^m \lambda_i Q_i$ is nonzero and positive semidefinite, i.e. there exists $\lambda \geq 0$ such that the set $\{x: \sum_{i = 1}^m \lambda_i f_i(x) < 0 \}$ is convex \blue{and not $\R^n$}. 
			\item If $\emptyset \subsetneq \textup{conv}(S) \subsetneq \mathbb{R}^n$, then $conv(S)$ is described by finitely many aggregations, where the leading $n \times n$ principal submatrix of each aggregation is positive semidefinite.
		\end{enumerate}
	\end{theorem}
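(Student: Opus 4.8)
The plan is to exploit that, for diagonal $Q_i$, everything reduces to the nonnegative orthant. Since each $Q_i$ is diagonal we have $b_i=0$ and $A_i$ diagonal; with $a_{ij}:=(Q_i)_{jj}$ for $j\in[n]$ this reads $f_i(x)=\sum_{j=1}^n a_{ij}x_j^2+c_i$. Put $\tilde A:=(a_{ij})\in\R^{m\times n}$, $c:=(c_i)\in\R^m$, and for $\lambda\in\R^m$ let $A_\lambda:=\sum_i\lambda_iA_i$ be the diagonal matrix with entries $(\tilde A^\top\lambda)_j$. Coordinatewise squaring identifies $S$ with (the coordinatewise square roots of) the polyhedron $T:=\{u\in\R^n:u\ge0,\ \tilde Au+c<0\}$, in the sense that $x\in S$ iff $(x_1^2,\dots,x_n^2)\in T$, and $(\sqrt{u_1},\dots,\sqrt{u_n})\in S$ whenever $u\in T$; in particular $S=\emptyset$ iff $T=\emptyset$. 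Two facts get used repeatedly: (i) a diagonal $Q_\lambda=\sum_i\lambda_iQ_i$ is PSD iff $\tilde A^\top\lambda\ge0$ and $c^\top\lambda\ge0$, and $A_\lambda\succeq0$ iff $\tilde A^\top\lambda\ge0$; (ii) $S$, $\conv(S)$ and every $S_\lambda$ are invariant under coordinate sign flips, and $F_\lambda(x)=\sum_j(\tilde A^\top\lambda)_jx_j^2+c^\top\lambda$ has no linear term, so $F_\lambda(x)=F_\lambda(|x|)$ where $|x|:=(|x_1|,\dots,|x_n|)$. Part (1) then follows: $T=\emptyset$ — infeasibility of the strict system $\tilde Au<-c$, $u\ge0$ — is, by Motzkin's transposition theorem, equivalent to the existence of $\lambda\in\R^m_{\ge0}\setminus\{0\}$ with $\tilde A^\top\lambda\ge0$ and $c^\top\lambda\ge0$, i.e.\ with $Q_\lambda\succeq0$ by (i); conversely, if $Q_\lambda\succeq0$ with $\lambda\ge0$ nonzero then $F_\lambda(x)=(x,1)^\top Q_\lambda(x,1)\ge0$ for all $x$, so $S=\emptyset$.

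The engine I would use for (2) and (3) is an S-lemma substitute obtained from convex duality. Fix $w\in\R^n_{\ge0}$ and suppose $T\ne\emptyset$ and $\sigma:=\sup_{u\in T}\sum_j w_j\sqrt{u_j}<\infty$; then also $\sigma=\sup_{x\in S}w^\top x$, because $w^\top x\le\sum_j w_j\sqrt{x_j^2}$ for $x\in S$ while each $u\in T$ is attained at $(\sqrt{u_1},\dots,\sqrt{u_n})\in S$. The program "maximize the concave function $\sum_j w_j\sqrt{u_j}$ over $T$" (equivalently over $\overline T$) satisfies Slater — any point of $T$ moves to one where all constraints hold strictly — so strong duality holds; dualizing only the constraints $\tilde Au+c\le0$ (keeping $u\ge0$) and maximizing over each $u_j\ge0$ separately gives the dual program
\[ \min\Bigl\{\textstyle\sum_j\tfrac{w_j^2}{4(\tilde A^\top\lambda)_j}-c^\top\lambda\ :\ \lambda\in\R^m_{\ge0},\ \tilde A^\top\lambda\ge0\Bigr\}\qquad(\text{convention }0/0:=0), \]
where feasibility forces $(\tilde A^\top\lambda)_j>0$ wherever $w_j>0$, and the optimum is attained at some $\lambda^*$. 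Hence $A_{\lambda^*}\succeq0$, with $A_{\lambda^*}\ne0$ whenever $w\ne0$, and $c^\top\lambda^*=\sum_j\tfrac{w_j^2}{4(\tilde A^\top\lambda^*)_j}-\sigma$; applying AM--GM termwise (using $w,\tilde A^\top\lambda^*\ge0$) then gives, for every $x\in\R^n$,
\[ F_{\lambda^*}(x)=\sum_j\Bigl((\tilde A^\top\lambda^*)_jx_j^2+\tfrac{w_j^2}{4(\tilde A^\top\lambda^*)_j}\Bigr)-\sigma\ \ge\ \sum_j w_j|x_j|-\sigma\ \ge\ w^\top x-\sigma. \]

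Given this tool, part (2) follows quickly: if $A_\lambda\succeq0$ is nonzero, then $S_\lambda=\{x:x^\top A_\lambda x+c^\top\lambda<0\}$ is convex (no linear term) and not all of $\R^n$ (as $x^\top A_\lambda x$ is unbounded above), and $S\subseteq S_\lambda$, so $\conv(S)\subseteq S_\lambda\ne\R^n$; conversely, if $\conv(S)\ne\R^n$, this nonempty proper convex set lies in a closed halfspace $\{v^\top x\le d\}$ with $v\ne0$, and with $w:=|v|\ne0$ we get $\sigma:=\sup_{x\in S}w^\top x=\sup_{x\in S}v^\top x\le d<\infty$, so the tool yields $\lambda^*\ge0$ with $A_{\lambda^*}\succeq0$ nonzero (for which, moreover, $S_{\lambda^*}$ is a proper convex set containing $\conv(S)$). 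For part (3), I would set $C:=\{\lambda\in\R^m_{\ge0}\setminus\{0\}:\tilde A^\top\lambda\ge0\}$ and prove $\conv(S)=\bigcap_{\lambda\in C}S_\lambda$. The inclusion $\subseteq$ is clear since each such $S_\lambda$ is convex and contains $S$. For $\supseteq$: given $x^*\notin\conv(S)$, separate $x^*$ from the nonempty convex set $\conv(S)$ to get $v\ne0$ with $\sup_{x\in S}v^\top x\le v^\top x^*$; with $w:=|v|\ne0$ we have $\sigma:=\sup_{x\in S}w^\top x=\sup_{x\in S}v^\top x\le v^\top x^*\le\sum_j w_j|x^*_j|<\infty$, so the tool gives $\lambda^*\in C$ with $F_{\lambda^*}(y)\ge w^\top y-\sigma$ for all $y$; evaluating at $y=|x^*|$ and using $F_{\lambda^*}(|x^*|)=F_{\lambda^*}(x^*)$ gives $F_{\lambda^*}(x^*)\ge\sum_j w_j|x^*_j|-\sigma\ge0$, i.e.\ $x^*\notin S_{\lambda^*}$. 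Finally, $\{\lambda\in\R^m_{\ge0}:\tilde A^\top\lambda\ge0\}$ is a pointed polyhedral cone, so it has finitely many extreme rays $\lambda^{(1)},\dots,\lambda^{(N)}\in C$; since $F_\lambda$ is linear in $\lambda$, any $\mu=\sum_k\alpha_k\lambda^{(k)}\in C$ ($\alpha\ge0$, not all zero) satisfies $S_\mu\supseteq\bigcap_k S_{\lambda^{(k)}}$, whence $\conv(S)=\bigcap_{\lambda\in C}S_\lambda=\bigcap_{k=1}^N S_{\lambda^{(k)}}$. Each $A_{\lambda^{(k)}}\succeq0$; deleting any $\lambda^{(k)}$ with $A_{\lambda^{(k)}}=0$ (then $S_{\lambda^{(k)}}=\R^n$, since $S\ne\emptyset$) and noting the remaining list is nonempty (else $\conv(S)=\R^n$), one obtains $\conv(S)$ as an intersection of finitely many aggregations, each with PSD leading $n\times n$ block.

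The hardest part will be the inclusion $\bigcap_{\lambda\in C}S_\lambda\subseteq\conv(S)$ in part (3): manufacturing, for each point outside $\conv(S)$, a separating aggregation whose $n\times n$ block is positive semidefinite — an S-lemma-type certificate. Because diagonal maps fail HHC (Example~\ref{example:separable}), Theorem~\ref{thm:more_quadratics} is unavailable and the certificate must be built by hand; the convex-duality-plus-AM--GM argument above is the substitute, and its delicate points are checking Slater so that the dual optimum is attained, recognizing the dual's feasible region as exactly $\{\tilde A^\top\lambda\ge0\}$ (so the certificate automatically has $A_{\lambda^*}\succeq0$), and excluding $\lambda^*=0$. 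Once this S-lemma substitute is in place, part (1), part (2) and the finiteness reduction in part (3) are routine.
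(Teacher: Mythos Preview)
Your proof is correct, but it takes a genuinely different route from the paper's. Both approaches begin the same way: use the coordinatewise squaring map to identify $S$ with the open polyhedron $T=\{u\ge 0:\tilde Au+c<0\}$ and invoke Motzkin's transposition theorem for part~(1). After that they diverge.

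The paper introduces the ``downward closure'' $\mathcal P'=\{y:\exists u\in T,\ y<u\}$ and proves directly (using only convexity of $t\mapsto t^2$) that $\conv(S)=\{x:x\circ x\in\mathcal P'\}$. It then shows $\mathcal P'$ is an open polyhedron whose facet inequalities, via LP duality on the projection $\{(u,y):\tilde Au\le -c,\ y\le u\}\to\{y\}$, are precisely of the form $(\lambda^\top\tilde A)\,y<-\lambda^\top c$ with $\lambda\ge 0$ and $\lambda^\top\tilde A\ge 0$; each such facet is the desired aggregation. Finiteness is immediate from the facet description of $\mathcal P'$. Your approach instead manufactures, for each separating direction $v$, an aggregation certificate by nonlinear convex duality: maximize $\sum_j|v_j|\sqrt{u_j}$ over $\overline T$, use Slater to get an attained dual optimum $\lambda^*$ with $\tilde A^\top\lambda^*\ge 0$, and then convert the dual identity into the quadratic inequality $F_{\lambda^*}(x)\ge |v|^\top|x|-\sigma$ via termwise AM--GM. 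Finiteness then comes from the extreme rays of the pointed polyhedral cone $\{\lambda\ge 0:\tilde A^\top\lambda\ge 0\}$. In short, the paper does \emph{linear} duality on a derived polyhedron and reads off facets; you do \emph{nonlinear} duality directly on $T$ and read off multipliers. Your route avoids introducing and analyzing $\mathcal P'$ and produces the separating aggregation in one stroke; the paper's route is more elementary (only LP duality) and gives an explicit combinatorial description of the aggregations that actually occur, namely the facets of $\mathcal P'$, rather than all extreme rays of the $\lambda$-cone.
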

	A proof of Theorem~\ref{thm:diagonal} is provided in Section~\ref{sec:diag}.
	
	In Theorem \ref{thm:more_quadratics} we assume $S\ne\emptyset$ and $\conv(S)\ne\R^n$, and it is natural to ask what happens if either assumption fails. We show that non-emptiness of $S$ can be certified using aggregations under the weaker assumption of hidden convexity (without requiring HHC). The situation for $\conv(S)\ne\R^n$ is more nuanced, and aggregation certificates suffice except for one case. 
	
	
	
	\begin{proposition}[Hidden convexity certifies non-emptiness of $S$]\label{prop:empty}
		Let $f_1,\ldots,f_m$ be quadratic functions where the image of the associated homogeneous quadratic map $\hf=(\hf_1,\ldots,\hf_m):\R^{n+1}\to\R^m$ is convex. Then $S=\emptyset$ if and only if $Q_{\lambda}\succeq 0$ for some nonzero $\lambda \in \mathbb{R}^m_{\geq 0}$. 
	\end{proposition}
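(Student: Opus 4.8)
The plan is to prove both directions of the equivalence $S = \emptyset \iff Q_\lambda \succeq 0$ for some nonzero $\lambda \in \mathbb{R}^m_{\geq 0}$, using the convexity of the image of the homogenized map $\hf$ together with a separating hyperplane argument.

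First, the easy direction. Suppose $Q_\lambda \succeq 0$ for some nonzero $\lambda \geq 0$. Then for every $x \in \mathbb{R}^n$ we have $F_\lambda(x) = \hf_\lambda(x,1) = (x,1)^\top Q_\lambda (x,1) \geq 0$, so $F_\lambda(x) \geq 0$ everywhere. Since $S \subseteq S_\lambda = \{x : F_\lambda(x) < 0\}$ and $S_\lambda = \emptyset$, we conclude $S = \emptyset$.

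For the converse, assume $S = \emptyset$. The goal is to produce the desired $\lambda$. I would work with the homogenized set: note that $S = \emptyset$ does not immediately give $S^h = \emptyset$ (points with $x_{n+1} = 0$ could still lie in $S^h$), so I should be careful. The key claim is: $S = \emptyset$ implies the point $-\ones = (-1,\ldots,-1) \in \mathbb{R}^m$ does not lie in the interior of $\operatorname{image}(\hf)$; more precisely, I want to show that the open negative orthant $-\mathbb{R}^m_{>0}$ is disjoint from $\operatorname{image}(\hf)$, OR find a suitable open convex set disjoint from $\operatorname{image}(\hf)$. Indeed, if some $\hf(x, x_{n+1})$ lay in $-\mathbb{R}^m_{>0}$, i.e., $\hf_i(x,x_{n+1}) < 0$ for all $i$, then $x_{n+1} \neq 0$ (since if $x_{n+1} = 0$ and $x \neq 0$... actually this case needs thought), and rescaling by $1/x_{n+1}$ gives a point of $S$. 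To handle the $x_{n+1} = 0$ issue: if $\hf_i(x, 0) = x^\top A_i x < 0$ for all $i$ with $x \neq 0$, then $tx$ for large $t$, combined with a perturbation, would land in $S$ — one shows that $\{x : x^\top A_i x < 0\,\forall i\} \neq \emptyset$ implies $S \neq \emptyset$ by a scaling and continuity argument (for large scaling the quadratic terms dominate). So $S = \emptyset$ forces $\operatorname{image}(\hf) \cap (-\mathbb{R}^m_{>0}) = \emptyset$.

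Now I invoke convexity: $\operatorname{image}(\hf)$ is a convex set disjoint from the open convex set $-\mathbb{R}^m_{>0}$, so by the separating hyperplane theorem there is a nonzero $\lambda \in \mathbb{R}^m$ with $\langle \lambda, \hf(z)\rangle \geq \langle \lambda, w\rangle$ for all $z \in \mathbb{R}^{n+1}$ and all $w \in -\mathbb{R}^m_{>0}$. Taking $w \to 0$ along the orthant forces $\langle \lambda, \hf(z)\rangle \geq 0$ for all $z$, i.e. $z^\top Q_\lambda z \geq 0$ for all $z$, which is $Q_\lambda \succeq 0$. To see $\lambda \geq 0$: $\sup_{w \in -\mathbb{R}^m_{>0}} \langle \lambda, w \rangle$ must be finite (bounded above by, say, $\langle \lambda, \hf(0)\rangle = 0$), and this supremum is $+\infty$ unless $\lambda \geq 0$; hence $\lambda \in \mathbb{R}^m_{\geq 0}$, and it is nonzero by construction. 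The main obstacle I anticipate is the careful treatment of the homogenization boundary case $x_{n+1} = 0$ — ensuring that $\operatorname{image}(\hf) \cap (-\mathbb{R}^m_{>0}) = \emptyset$ genuinely follows from $S = \emptyset$ rather than just $S^h = \emptyset$; the scaling/dominance argument for points $x$ with all $x^\top A_i x < 0$ is the delicate step, though it is standard.
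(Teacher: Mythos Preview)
Your proof is correct and follows essentially the same route as the paper: establish $\operatorname{image}(\hf)\cap(-\R^m_{>0})=\emptyset$ (i.e.\ $S^h=\emptyset$) and then separate the convex image from the open negative orthant to extract a nonzero $\lambda\ge 0$ with $Q_\lambda\succeq 0$. The only difference is in the $x_{n+1}=0$ case: the paper observes that $S^h$, if nonempty, is open and full-dimensional and hence cannot be contained in the hyperplane $\{x_{n+1}=0\}$, whereas you argue by scaling that $x^\top A_i x<0$ for all $i$ forces $tx\in S$ for large $t$ (and indeed no perturbation is needed, since $f_i(tx)=t^2 x^\top A_i x + O(t)$).
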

	A proof of Proposition~\ref{prop:empty} is given in Section~\ref{sec:emptyrn}.

	
	
	
	
	Note that if there exists \blue{a nonzero} $\lambda\ge 0$ such that $\sum_{i = 1}^m \lambda_i A_i\succeq 0$ and furthermore $\sum_{i=1}^m \lambda_i f_i(x)$ is not a negative constant \blue{function}, then $\conv(S)\ne\R^n$. We now show the partial converse that if no nonzero $\lambda\ge 0$ satisfies $\sum_{i = 1}^m \lambda_i A_i\succeq 0$ then $\conv(S)=\R^n$ if we assume hidden convexity on a particular hyperplane. The unresolved case is where $\sum_{i=1}^m \lambda_i f_i(x)$ is a negative constant for all nonzero $\lambda\ge 0$ satisfying $\sum_{i = 1}^m \lambda_i A_i\succeq 0$. 
	
	\begin{proposition}\label{prop:Rn}
		Let $\mathcal{E}=\{(x,x_{n+1})\in\R^{n+1}:x_{n+1}=0 \}$ and assume $\im\hf|_{\mathcal{E}}$ is convex. Assume there does not exist nonzero $\lambda\in\R^m_+$ such that $\sum_{i =1}^m \lambda_i A_i\succeq 0$. Then $\conv(S)=\R^n$.  
	\end{proposition}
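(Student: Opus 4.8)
The plan is to reduce everything to producing a single direction $v\in\R^n$ along which all the $f_i$ eventually become negative, and then use midpoints to sweep out all of $\R^n$.

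First I would unwind the hypotheses. Identifying $\mathcal{E}$ with $\R^n$ via $(x,0)\mapsto x$, we have $\hf_i(x,0)=x^{\top}A_i x$, so $\im\hf|_{\mathcal{E}}$ is exactly the joint numerical range $C:=\{(x^{\top}A_1x,\dots,x^{\top}A_mx):x\in\R^n\}$, which is a convex cone in $\R^m$ by assumption. Since $\sum_i\lambda_i A_i\succeq 0$ is equivalent to $\ip{\lambda}{y}\ge 0$ for all $y\in C$, the standing assumption says precisely that the dual cone $C^{*}:=\{\,y\in\R^m:\ip{y}{c}\ge 0\ \forall c\in C\,\}$ satisfies $C^{*}\cap\R^m_+=\{0\}$.

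Next I would run a separating-hyperplane argument to show that $-C$ meets the open orthant $\textup{int}\,\R^m_+$. If it did not, then the convex cone $-C$ would be disjoint from the nonempty open convex set $\textup{int}\,\R^m_+$, so there would exist a nonzero $y\in\R^m$ and $\alpha\in\R$ with $\ip{y}{u}\le\alpha\le\ip{y}{w}$ for all $u\in -C$ and $w\in\textup{int}\,\R^m_+$; exploiting that both $-C$ and $\textup{int}\,\R^m_+$ are cones forces $\alpha=0$, $\ip{y}{c}\ge 0$ for all $c\in C$ (so $y\in C^{*}$), and $\ip{y}{w}\ge 0$ for all $w\in\R^m_+$ (so $y\in\R^m_+$), contradicting $C^{*}\cap\R^m_+=\{0\}$. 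Hence there is $v\in\R^n$ with $v^{\top}A_iv<0$ for every $i\in[m]$ (and in particular $v\ne 0$). This is the technical heart of the argument, and the place where convexity of $\im\hf|_{\mathcal{E}}$ is genuinely needed: without it, separation would only place a strictly negative vector in $\overline{\conv(C)}$, which need not be the image of any single vector — so this step is really a statement of homogeneous S-lemma type for the forms $x^{\top}A_ix$, and is the main (though not deep) obstacle.

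Finally I would sweep out $\R^n$. Fix any $x_0\in\R^n$. For each $i$, the map $t\mapsto f_i(x_0+tv)$ is a quadratic polynomial in $t$ with leading coefficient $v^{\top}A_iv<0$, hence tends to $-\infty$ as $|t|\to\infty$; choosing $T>0$ large enough to work for all $i\in[m]$ simultaneously, both $x_0+Tv$ and $x_0-Tv$ lie in $S$, so $x_0=\frac12\big((x_0+Tv)+(x_0-Tv)\big)\in\conv(S)$. Since $x_0$ was arbitrary, $\conv(S)=\R^n$ (and in particular $S\ne\emptyset$), which completes the proof.
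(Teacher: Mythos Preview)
Your proof is correct and follows essentially the same approach as the paper: both first produce a direction $v$ with $v^{\top}A_iv<0$ for all $i$ via a separation/S-lemma argument on the convex image $\im\hf|_{\mathcal{E}}$, and then use that $f_i(x_0\pm tv)\to-\infty$ to write every $x_0$ as a midpoint of two points in $S$. The only cosmetic difference is that the paper obtains $v$ by invoking Proposition~\ref{prop:empty} applied to the homogeneous forms $g_i(x)=x^{\top}A_ix$, whereas you carry out the equivalent separating-hyperplane argument inline.
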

	
	It is clear that this assumption is weaker than hidden hyperplane convexity, which requires $\im\hf|_H$ to be convex for any hyperplane $H\subseteq\R^{n+1}$.  A proof of Proposition~\ref{prop:Rn} is presented in Section~\ref{sec:emptyrn}.

	\subsection{Convex hull of sets defined by linear and sphere constraints}
	

	
	In Theorem~\ref{thm:HHTnontriv} we prove hidden hyperplane convexity of a special class of quadratic maps. This results  leads to the following theorem on sets defined by linear 
	and sphere inequalities. 
	
	\begin{theorem}\label{thm:cheese}
		Let $f_i(x)=x^{\top}A_i x+2b_i^{\top}x+c_i,1\le i\le m$ be quadratic functions on $\R^n$, where $A_i$ is either $I_n$ for $i \in P \subseteq [m]$,$-I_n$ for $i \in N \subseteq [m]$ or $0$ for $i \in Z \subseteq [m]$. Let $S=\{x\in\R^n: f_i(x)<0,i\in [m]\}$. 
		
		
		Then:
		\begin{itemize}
			\item $S=\emptyset$ if and only there exists some nonzero $\lambda\ge 0$ such that $Q_{\lambda}\succeq 0$ (which can be checked using an SDP.)
			\item $\conv(S)\ne\R^n$ if and only if either $P\ne\emptyset$ or there exists $i\in Z$ such that $b_i\ne 0$ or $b_i=0$ and $c_i\ge 0$. 
			\item If $\emptyset \subsetneq \textup{conv}(S) \subsetneq \mathbb{R}^n$, and either $m\le n-1$ or $m\le n$ and PDLC condition holds, 
			then $\conv(S)$ can be described by at most $|P||N|+|P|+|Z|$ aggregations.
		\end{itemize}
	\end{theorem}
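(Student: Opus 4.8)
The plan is to reduce all three bullets to two ingredients: that the homogeneous quadratic map $\hf=(\hf_1,\dots,\hf_m)$ satisfies HHC, and the general results already proved. Granting HHC, the first bullet is immediate from Proposition~\ref{prop:empty} (HHC implies hidden convexity of $\hf$ since the ambient dimension $n+1\ge 3$, so that proposition applies, and checking whether some nonzero $\lambda\ge 0$ has $Q_\lambda\succeq 0$ is a semidefinite feasibility problem). The third bullet follows by combining Theorem~\ref{thm:more_quadratics} — which under HHC yields $\conv(S)=\bigcap_{\lambda\in\Omega}S_\lambda$ — with the explicit count carried out below. The second bullet uses no hidden convexity at all. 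So the genuine work is (a) verifying HHC and (b) the count.

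\textbf{Verifying HHC.} The plan is to fit the homogenized matrices into Theorem~\ref{thm:HHTnontriv}. The structural observation is that every $Q_i$ lies in the $(n+2)$-dimensional subspace $\span(I_{n+1})+\mathcal L$ of $\mathrm{Sym}_{n+1}$, where $\mathcal L$ is the $(n+1)$-dimensional space of matrices representing forms $x_{n+1}\,\ell(x,x_{n+1})$ with $\ell$ linear: for $i\in P$ one has $Q_i=I_{n+1}+R_i$, for $i\in N$ one has $Q_i=-I_{n+1}+R_i$ with $R_i\in\mathcal L$, and for $i\in Z$ one has $Q_i\in\mathcal L$. Since $\mathcal L$ is a complement of $\span(I_{n+1})$ inside $\span(I_{n+1})+\mathcal L$, projecting $\span\{Q_i\}$ onto $\mathcal L$ along $\span(I_{n+1})$ has image of some dimension $k\le\dim\span\{Q_i\}$. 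If $\dim\span\{Q_i\}\le n-1$ then $k\le n-1$ and I would take the family $\{I_{n+1}\}\cup\{x_{n+1}\ell_1,\dots,x_{n+1}\ell_k\}$, where $x_{n+1}\ell_1,\dots,x_{n+1}\ell_k$ span that image. If $\dim\span\{Q_i\}=n$ then PDLC supplies a positive-definite $Q_\theta\in\span\{Q_i\}$; writing $Q_\theta=\alpha I_{n+1}+R$ with $R\in\mathcal L$ and $\alpha>0$, the space $\mathcal L$ is also a complement of $\span(Q_\theta)$, and projecting $\span\{Q_i\}$ onto $\mathcal L$ along $\span(Q_\theta)$ has image of dimension exactly $\dim\span\{Q_i\}-1=n-1$ (since $Q_\theta\in\span\{Q_i\}$), so I would take $\{Q_\theta\}\cup\{x_{n+1}\ell_1,\dots,x_{n+1}\ell_{n-1}\}$. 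In either case the span of the chosen family contains $\span\{Q_i\}$, and the family is one positive-definite form together with $k\le n-1$ forms sharing the common linear factor $\ell=x_{n+1}$; since the ambient dimension $n+1$ exceeds $k+1$ (and $k\ge 2$ whenever $\dim\span\{Q_i\}\ge 3$), Theorem~\ref{thm:HHTnontriv} gives HHC of the family, hence of $Q_1,\dots,Q_m$ by Lemma~\ref{lem:HHC_preserved}(2). If $\dim\span\{Q_i\}\le 2$ I would instead quote Corollary~\ref{cor:m23}(1).

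\textbf{The second bullet.} For ``$\Leftarrow$'': if $P\ne\emptyset$ then $S$ is contained in a bounded ball; if some $i\in Z$ has $b_i\ne 0$ then $S$ lies in a proper open halfspace; if some $i\in Z$ has $b_i=0$, $c_i\ge 0$ then $S=\emptyset$; in each case $\conv(S)\ne\R^n$. For ``$\Rightarrow$'' I would argue the contrapositive: if $P=\emptyset$ and every $i\in Z$ has $b_i=0$, $c_i<0$, delete these (vacuous) constraints, so that $S$ is the complement in $\R^n$ of finitely many closed balls; through any point one can draw a line meeting the union of these balls in a bounded set, exhibiting that point as the midpoint of two points of $S$, so $\conv(S)=\R^n$.

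\textbf{The count in the third bullet, and the main obstacle.} Here $\conv(S)=\bigcap_{\lambda\in\Omega}S_\lambda$ by Theorem~\ref{thm:more_quadratics}, and the claim I would prove is $\conv(S)=\bigcap_{\lambda\in\Lambda^*}S_\lambda$ for the explicit family $\Lambda^*=\{e_p:p\in P\}\cup\{e_i:i\in Z\}\cup\{e_p+e_q:p\in P,\,q\in N\}$, which has exactly $|P|+|Z|+|P||N|$ members. One inclusion reduces to checking $\Lambda^*\subseteq\Omega$: for such $\lambda$ the leading $n\times n$ block of $Q_\lambda$ is $I_n$ or $0$, so by Cauchy interlacing $Q_\lambda$ has at most one negative eigenvalue, and $S_\lambda$ is a ball or halfspace containing $S$, hence $\conv(S)$. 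For the reverse inclusion it suffices to show that each $F_\mu$, $\mu\in\Omega$, is a nonnegative combination of $\{F_\lambda:\lambda\in\Lambda^*\}$ with at least one positive coefficient — then $F_\lambda(x)<0$ for all $\lambda\in\Lambda^*$ forces $F_\mu(x)<0$. This is where the $\pm I_n/0$ structure of the $A_i$ is used, via a transportation argument: membership $\mu\in\Omega$ forces $\alpha_\mu:=\sum_{i\in P}\mu_i-\sum_{i\in N}\mu_i\ge 0$ (interlacing again), which is exactly the condition for a feasible flow $\theta_{pq}\ge 0$ with $\sum_{p}\theta_{pq}=1$ for every $q\in N$ and $\sum_{q}\theta_{pq}\mu_q\le\mu_p$ for every $p\in P$ to exist; substituting $f_q=(f_p+f_q)-f_p$ along this flow rewrites
\[
F_\mu=\sum_{p\in P}\Big(\mu_p-\sum_{q\in N}\theta_{pq}\mu_q\Big)f_p+\sum_{p\in P,\,q\in N}\theta_{pq}\mu_q\,(f_p+f_q)+\sum_{i\in Z}\mu_i f_i,
\]
a nonnegative combination of $F_{e_p}$, $F_{e_p+e_q}$, $F_{e_i}$, and it is not the zero combination since $F_\mu\not\equiv 0$ (otherwise $S\subseteq S_\mu=\emptyset$, contradicting $\conv(S)\ne\emptyset$). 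I expect the fiddliest step to be the dimension bookkeeping in the HHC verification, where the hypothesis is split into $\dim\span\{Q_i\}\le n-1$ versus the PDLC case precisely to force $k\le n-1$; the flow decomposition, which is the genuinely new ingredient here, is short once the right finite family $\Lambda^*$ has been identified.
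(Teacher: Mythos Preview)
Your proposal is correct and follows essentially the same route as the paper. The HHC verification is the paper's Proposition~\ref{prop:spheres}: the paper first uniformizes to the PDLC case by appending the trivial constraint $Q_0=-I_{n+1}$ when $\dim\span\{Q_i\}\le n-1$, then constructs $Q'_0\succ 0$ and $Q'_i$ with zero leading block exactly as you do by projecting onto $\mathcal L$; your count argument is the paper's Proposition~\ref{prop:finiteness_sphere}, where the explicit choice $\gamma_{pq}=\lambda_p\lambda_q/\sum_{P}\lambda_i$ is precisely the flow $\theta_{pq}=\lambda_p/\sum_{P}\lambda_i$ you invoke, and your second bullet is Lemma~\ref{lem:Rn_sphere} verbatim.
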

	
	A proof of Theorem~\ref{thm:cheese} is presented in Section~\ref{sec:cheese}. Here is an example where $m=n=3$ and PDLC holds. 
	
	\begin{example}\label{example:3spheres}
		Consider the following three quadratics in $\R^3$. 
		
		$$f_1(x)=x_1^2 + x_2^2 + x_3^2-2x_3-1 ,\,\, f_2(x)=x_1^2 + x_2^2 + x_3^2+2x_3-4  ,\,\,  f_3(x)=-x_1^2-x_2^2-x_3^2+1$$
		
		PDLC is satisfied with $\theta=(-1,-1,-3)$, and in this case $|P|=2,|N|=1,|Z|=0$. Thus by Theorem~\ref{thm:cheese} the convex hull is given by at most four aggregations. In fact, three aggregations suffice for this example. A plot of this region is given in Figure \ref{fig:3sph}.
	\end{example}
	
	\begin{figure}[h] 
		\centering
		\includegraphics[scale=0.75]{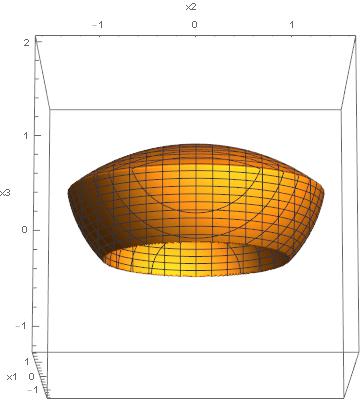}
		\caption{Plot of Example~\ref{example:3spheres}. }
		\label{fig:3sph}
	\end{figure}
	
	Note each $f_i$ defines a region that is either a ball if $A_i=I$, a linear halfspace if $A_i=0$, or the complement of a ball if $A_i=-I$. Thus Theorem~\ref{thm:cheese} applies to sets defined by linear and sphere inequalities. Such sets appear in the context of trust region subproblems, and have been studied in papers  such as~\cite{yang2018quadratic,bienstock2014polynomial,ho2017second}. We make no assumptions on the constraints, as opposed to \cite{yang2018quadratic} which requires the outside-the-ball constraints ($A = -I$) to be non-intersecting. On the other hand we only study $\conv(S)$ instead of the convex hull in lifted space $\{(x,xx^{\top}):x\in S \}$ or algorithms to solve the trust region problem as in~\cite{bienstock2014polynomial}. If $|P|+ |N| = 1$, then the set in Theorem~\ref{thm:cheese} is related to the set studied in~\cite{asterquad}. \blue{We also note that in this case every good aggregation is in fact convex, i.e., each good aggregation defines a single convex set, and the set of good aggregations $\Omega$ is polyhedral. Therefore the results in \cite{wang2022tightness} apply to this case.}

	The following example shows that convex hull may not be given by aggregations when there are $n+1$ (linearly independent) linear and sphere constraints and PDLC does not hold. We do not know whether an example with $n$ linearly independent constraints (without PDLC) exists. 
	
	\begin{example}\label{example:sphere_n+1}
		Let $$f_0=1-\sum_{i=1}^n x_i^2 \hspace{.5cm} \text{and} \hspace{.5cm} f_i=-x_i,i\in[n].$$ We claim that $\conv(S)=\{x>0:\sum_{i=1}^n x_i>1 \}$. It is clear $\varepsilon\bar{1}+\alpha e_i\in S$ for all $\varepsilon>0,\alpha>1,i\in [n]$, and therefore $\{x>0:\sum_{i=1}^n x>1 \}\subseteq\conv(S)$. To show $\conv(S)\subseteq\{x>0:\sum_{i=1}^n x>1 \}$, it suffices to show $\sum_{i=1}^n x_i>1 $ holds for all $x\in S$. Suppose this does not hold, then there exists $y>0$ with $\sum_{i=1}^n y_i\le 1$, then $0\le y_i\le 1$ for all $i$, and $\sum_{i=1}^n y_i^2\le \sum_{i=1}^n y_i\le 1$, which means $f_0(y)\ge 0$ and $y\notin S$. 
		
		We now show that the set defined by all good aggregations is the positive orthant, which is different from the actual convex hull. Let $\lambda\ge 0$ be any good aggregation. Since $Q_{\lambda}$ has at most one negative eigenvalue, we must have $\lambda_0=0$. Thus all good aggregations are nonnegative linear combinations of $f_i$.
	\end{example}



	
	
	
	\subsection{Together hidden hyperplane convexity and PDLC for every triple lead to finite number of good aggregations defining the convex hull}  
	\label{sec:finite0}

	
	In the case where convex hull can be described by aggregations, a natural question is whether finitely many aggregations suffice. When $m = 2$, this is already shown to be true~\cite{yildiran2009convex}. Note that we also verified this for the special cases as stated in Theorem~\ref{thm:cheese} and Theorem~\ref{thm:diagonal}. 
	
	
	This question was raised in \cite{dey2021obtaining} for three quadratics under PDLC condition. Here we give an affirmative answer for three quadratics, and consider the question in the more general setting of $m$ quadratics under hidden hyperplane convexity assumption, where every triple of quadratics satisfies PDLC condition. 
	
	\begin{theorem}\label{thm:finite}
		Let $n \geq3$ and $f_i:\mathbb{R}^n \rightarrow \mathbb{R}$ be the functions $f_i(x)=x^{\top}A_ix+2b_i^{\top}x+c_i,i\in [m]$. Let $S=\{x\in\R^n:f_i(x)<0,i\in [m]\}$. Assume $S \neq \emptyset$ and $\textup{conv}(S) \neq \mathbb{R}^n$ and HHC holds for the associated homogeneous quadratic map $\hf$, so that Theorem \ref{thm:more_quadratics} holds and $$\conv(S)=\bigcap_{\lambda\in\Omega_1}S_{\lambda},$$ where $\Omega_1=\Omega\setminus\{\lambda\in\Omega:S_{\lambda}=\R^n \}$. 
		Furthermore assume for all distinct $i,j,k\in [m]$ there exist scalars $p_{ijk},q_{ijk},r_{ijk}\in\R$ such that $p_{ijk} Q_i+q_{ijk}Q_j+r_{ijk}Q_k\succ 0$. Then there exist $\lambda^{(1)},\ldots,\lambda^{(r)}\in\Omega_2$ such that $$\conv(S)=\bigcap_{i=1}^r S_{\lambda^{(i)}},$$ where  $\Omega_2=\{\lambda\in\Omega_1: |\{i:\lambda_i>0\}|\le 2 \}$ and $r \leq m^2 - m$. 
		
		Moreover, given any $u, v \in [m]$, $u \neq v$, there are at most two $\lambda^{(i)}$s with support $u$, $v$. Furthermore, these $\lambda^{(i)}$s can be written as $\alpha'e_u+(1-\alpha')e_v,\alpha''e_u+(1-\alpha'')e_v$, where $\alpha',\alpha''$ are roots of $\det (\alpha Q_u + (1-\alpha)Q_v)=0$. 
	\end{theorem}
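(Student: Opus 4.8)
The statement to be proved is Theorem~\ref{thm:finite}, so I will outline a proof that (a) the convex hull is described by aggregations supported on at most two of the $m$ quadratics, (b) for each pair $u,v$ at most two such aggregations are needed, and (c) these can be taken from the roots of $\det(\alpha Q_u+(1-\alpha)Q_v)=0$, giving the bound $r\le m^2-m$.

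First I would use the PDLC-for-every-triple hypothesis to pass from $\Omega_1$ to aggregations of small support. Take any $\lambda\in\Omega_1$; I want to show $\conv(S)$ is already cut out by the two-supported aggregations, i.e. that for every $x^*\notin\conv(S)$ there is $\mu\in\Omega_2$ with $F_\mu(x^*)\ge 0$. Since $x^*\notin\conv(S)=\bigcap_{\lambda\in\Omega_1}S_\lambda$, some $\lambda\in\Omega_1$ has $F_\lambda(x^*)\ge 0$. If $\lambda$ already has support $\le 2$ we are done; otherwise I reduce the support. The key reduction: if $\lambda$ has support on indices $i,j,k,\dots$, restrict attention to the triple $\{i,j,k\}$; by hypothesis $p Q_i+qQ_j+rQ_k\succ 0$, which — via Corollary~\ref{cor:m23}(2) and Lemma~\ref{lem:HHC_preserved}(2) (the span of $\{Q_i,Q_j,Q_k\}$ lies inside the HHC span) — means the three-quadratic theory of~\cite{dey2021obtaining} applies to the subsystem, so $\conv(S_{\{i,j,k\}})$ (the convex hull of the set defined by $f_i,f_j,f_k$ alone, which contains $\conv S$) is given by its good aggregations, and crucially those can be taken supported on $\le 2$ indices among $\{i,j,k\}$. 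I'd need to argue carefully that a good aggregation $\lambda$ of the full system restricted to a triple's worth of positive weight can be "split" or replaced by a two-supported good aggregation of the full system that still separates $x^*$; the cleanest route is to first zero out all but three coordinates of $\lambda$ (keeping those where $f_i(x^*)\ge 0$ is "most responsible"), check this stays in $\Omega$, then invoke the $m=3$ finiteness result on that triple.

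The heart of the matter — and the part I expect to be the main obstacle — is the $m=3$ case itself: showing that three quadratics under PDLC need only finitely many, in fact two-supported, good aggregations, with the two-supported ones governed by $\det(\alpha Q_u+(1-\alpha)Q_v)=0$. Here I would follow the roadmap of~\cite{dey2021obtaining}: their proof produces, for each boundary point of $\conv(S)$, a supporting good aggregation, and I'd analyze the structure of $\Omega$ as a subset of the simplex. The set of $\lambda$ for which $Q_\lambda$ has exactly one zero eigenvalue (the "tight" aggregations, whose boundary touches $\partial\conv(S)$ in the right way) is a real algebraic variety $\{\det Q_\lambda=0\}$; one shows every facet-defining aggregation lies on this variety and in fact on the part of it where the support is minimal. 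For a pencil $\alpha Q_u+(1-\alpha)Q_v$, the equation $\det(\alpha Q_u+(1-\alpha)Q_v)=0$ is a polynomial in $\alpha$, but under PDLC the relevant sign-changes pin down at most two admissible roots $\alpha',\alpha''$ — this is essentially the $m=2$ analysis of~\cite{yildiran2009convex} applied to the pencil, since any two matrices satisfy HHC by Corollary~\ref{cor:m23}(1). The delicate point is ruling out that a genuinely three-supported aggregation is ever \emph{needed} — i.e., that whenever $\lambda$ with full support on $\{i,j,k\}$ gives a face of $\conv S$, that same face is already exposed by a two-supported aggregation; I'd handle this by a limiting/perturbation argument, moving along the boundary of the simplex, using that $S_\lambda$ depends continuously on $\lambda$ and that a strictly-three-supported good aggregation can be perturbed within $\Omega$ toward the boundary of the simplex while only shrinking $S_\lambda$, until the support drops.

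Finally, the counting. Once every needed aggregation is two-supported and, for a fixed pair $(u,v)$, comes from the at-most-two admissible roots of $\det(\alpha Q_u+(1-\alpha)Q_v)=0$, we get at most $2$ per \emph{ordered} pair or, accounting for the $\binom{m}{2}$ unordered pairs with $2$ each, $r\le 2\binom{m}{2}=m^2-m$; I'd present it as $m^2-m$ to match the statement (ordered pairs, one root slot each, or equivalently $2\binom m2$). I would also verify the explicit form $\alpha' e_u+(1-\alpha')e_v$ is legitimate, i.e. that these are genuinely in $\Omega_1$ (nonzero, nonnegative after possibly swapping which root sits in $(0,1)$, and $S_{\lambda}\ne\R^n$), which follows because a root of $\det(\alpha Q_u+(1-\alpha)Q_v)=0$ in the relevant range makes $Q_\lambda$ positive semidefinite with a one-dimensional kernel — precisely the "good" boundary aggregations. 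The routine verifications (membership in $\Omega_1$, the $\Omega_2$ support condition, continuity of $S_\lambda$) I would relegate to short lemmas; the conceptual work is the support-reduction via the triple-PDLC hypothesis and the pencil analysis.
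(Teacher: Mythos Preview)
Your outline --- reduce to two-supported aggregations, then analyze each pencil --- is correct, but the support-reduction step has a genuine gap. Neither of your proposed routes works. Zeroing out all but three coordinates of $\lambda$ yields a $\lambda'$ with no control on $\nu(Q_{\lambda'})$ and no inclusion $S_{\lambda'}\subseteq S_\lambda$, so you lose both membership in $\Omega$ and separation of $x^*$. Passing to the subsystem $S_{\{i,j,k\}}\supseteq S$ fails because $x^*$ may well lie in $\conv(S_{\{i,j,k\}})$, so the three-quadratic theory applied there gives you no aggregation excluding $x^*$. Your closing perturbation instinct is the right one, but the mechanism you are missing is this: for $\lambda\in\Omega_1$ with support containing $i,j,k$, let $\theta=p_{ijk}e_i+q_{ijk}e_j+r_{ijk}e_k$ be the triple-PDLC direction, so $Q_\theta\succ 0$. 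Then for $\alpha\ge 0$ one has $Q_{\lambda+\alpha\theta}\succeq Q_\lambda$, hence $S_{\lambda+\alpha\theta}\subseteq S_\lambda$, and by Weyl's inequality $Q_{\lambda+\alpha\theta}$ still has at most one negative eigenvalue; a short check gives $\lambda+\alpha\theta\in\Omega_1$. Since $p_{ijk},q_{ijk},r_{ijk}$ cannot all be nonnegative (else $S=\emptyset$), some coordinate of $\lambda+\alpha\theta$ reaches zero at a finite $\alpha_0>0$, strictly shrinking the support. Iterating drops the support to at most two and shows $\bigcap_{\lambda\in\Omega_2}S_\lambda=\bigcap_{\lambda\in\Omega_1}S_\lambda$ directly, with no pointwise $x^*$ argument needed.

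Your pencil analysis also needs correction: at a root $\alpha$ of $\det(\alpha Q_u+(1-\alpha)Q_v)=0$ the matrix is singular with exactly one negative eigenvalue, not positive semidefinite. The relevant input from \cite{yildiran2009convex} is that $\{\alpha\in[0,1]:\nu(\alpha Q_u+(1-\alpha)Q_v)=1\}$ is a union of at most two closed intervals whose endpoints are such roots. The nontrivial step is then to show that if one point of an interval lies in $\Omega_{uv}$ then the whole interval does (this uses Yildiran's common separating hyperplane for the interval); once that is established, the two \emph{outermost} endpoints $\alpha',\alpha''$ dominate the rest, since every $\lambda\in\Omega_{uv}$ is a nonnegative combination of $\alpha'e_u+(1-\alpha')e_v$ and $\alpha''e_u+(1-\alpha'')e_v$, giving $S_{\alpha'}\cap S_{\alpha''}\subseteq S_\lambda$. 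This yields the bound $2\binom{m}{2}=m^2-m$.
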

	A proof of Theorem~\ref{thm:finite} is presented in Section~\ref{sec:finite}.
	The key ideas to prove Theorem~\ref{thm:finite} are the following:
	\begin{itemize}
		\item Given an aggregation $S_{\lambda}$, one can obtain an improved aggregation $S_{\tilde{\lambda}}$, i.e. $S_{\tilde{\lambda}} \subseteq S_{\lambda}$, such that $\tilde{\lambda} \in \Omega_1$. This is obtained as $\tilde{\lambda} = \lambda + \theta$ where $Q_{\theta}\succeq 0$. (Proposition~\ref{prop:improve})
		\item The idea is to repeatedly improve along such positive definite linear combinations so as to reduce the support of aggregations that are required to obtain the convex hull to at most $2$. (Proposition~\ref{prop:2aggregation})
		\item Now among aggregations that have support of fixed two indices, say $i$ and $j$, it is shown that at most two aggregations are sufficient. (Proposition~\ref{prop:pairwise_endpoints})
	\end{itemize}
	
	\begin{remark}
		As discussed in \cite{dey2019convexifications}, the closure of each component defined by a good aggregation  is second-order cone representable (SOCr). Thus finiteness of good aggregations implies that the closure of convex hull is SOCr, since it is given by intersection of finitely many components which are all SOCr. 
	\end{remark}
	
	For the case of $m =3$, note that PDLC implies hidden hyperplane convexity, so PDLC is sufficient to guarantee that no more than $3^2  - 3 = 6$ aggregations are sufficient to obtain the convex hull, answering a question raised in \cite{dey2021obtaining}. 
	
	\begin{corollary}\label{cor:6_agg}
		Let $f_1,f_2,f_3$ be three quadratic functions such that there exist $\theta\in\R^3$ such that $\sum_{i=1}^3 \theta_i Q_i\succ 0$. Suppose $S\ne\emptyset$ and $\conv(S)\ne\R^n$. Then there exists $\Omega'\subseteq \Omega,|\Omega|\le 6$ such that $\conv(S)=\bigcap_{\lambda\in\Omega'}S_{\lambda}$. 
	\end{corollary}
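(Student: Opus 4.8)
The plan is to obtain Corollary~\ref{cor:6_agg} as the $m=3$ specialization of Theorem~\ref{thm:finite}; the entire task reduces to checking that the hypotheses of that theorem follow from the single assumption $\sum_{i=1}^3\theta_iQ_i\succ 0$. The two topological hypotheses, $S\ne\emptyset$ and $\conv(S)\ne\R^n$, are assumed outright in the corollary, so nothing needs to be done there.

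The substantive step is to verify hidden hyperplane convexity of the homogeneous quadratic map $\hf=(\hf_1,\hf_2,\hf_3):\R^{n+1}\to\R^3$. I would argue this exactly as in the proof of Corollary~\ref{cor:m23}(2), but applied to the $(n+1)\times(n+1)$ homogenization matrices $Q_i=\begin{bmatrix}A_i&b_i\\b_i^{\top}&c_i\end{bmatrix}$ rather than to the $A_i$: by Observation~\ref{obs:red} it suffices to show that $W^{\top}Q_1W,\,W^{\top}Q_2W,\,W^{\top}Q_3W$ satisfy hidden convexity for every full-rank $W\in\R^{(n+1)\times n}$. Since $W$ is injective, $\sum_{i=1}^3\theta_i(W^{\top}Q_iW)\succ 0$, so these three forms satisfy PDLC; as they are quadratic forms in $n\ge 3$ variables, Calabi's theorem~\cite{calabi1982linear} applies and yields the desired hidden convexity. (Equivalently, one invokes Corollary~\ref{cor:m23}(2) with ambient dimension $n+1\ge 4$.) The remaining hypothesis of Theorem~\ref{thm:finite} — that every triple of distinct indices admits a positive definite combination — is vacuous beyond the single triple $\{1,2,3\}$ when $m=3$, and for that triple one takes $(p_{123},q_{123},r_{123})=(\theta_1,\theta_2,\theta_3)$.

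With all hypotheses verified, Theorem~\ref{thm:finite} produces $\lambda^{(1)},\ldots,\lambda^{(r)}\in\Omega_2\subseteq\Omega$ with $\conv(S)=\bigcap_{i=1}^r S_{\lambda^{(i)}}$ and $r\le m^2-m=6$; setting $\Omega'=\{\lambda^{(1)},\ldots,\lambda^{(r)}\}$ gives $\Omega'\subseteq\Omega$ with $|\Omega'|\le 6$ and completes the argument.

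The only place I would tread carefully is the dimension bookkeeping in the HHC step: Theorem~\ref{thm:finite} asks for HHC of the \emph{homogenized} map on $\R^{n+1}$, whereas Corollary~\ref{cor:m23}(2) as stated is phrased for non-homogenized matrices and requires ambient dimension at least $4$, so one should confirm that the relevant restrictions $W^{\top}Q_iW$ are genuine quadratic forms in at least three variables (automatic from $n\ge 3$) before appealing to Calabi's theorem. Everything else is a direct read-off from Theorem~\ref{thm:finite}, so I do not anticipate any further obstacle.
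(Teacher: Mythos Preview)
Your proposal is correct and follows exactly the route the paper intends: invoke Corollary~\ref{cor:m23}(2) (PDLC $\Rightarrow$ HHC for the $(n{+}1)\times(n{+}1)$ matrices $Q_i$, which is legitimate since $n\ge 3$ gives ambient dimension $n{+}1\ge 4$), note that the ``every triple PDLC'' hypothesis is vacuous beyond the single triple $\{1,2,3\}$, and read off the bound $m^2-m=6$ from Theorem~\ref{thm:finite}. The paper's own derivation is the one-sentence remark immediately preceding the corollary, and your write-up is simply a more careful unpacking of that sentence.
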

	The following example for $m=3$ case requires $4$ aggregations to describe the convex hull.
	
	\begin{example}\label{example:4}
		Consider the set $S$ described by the following functions:
		\begin{equation*} 
			\begin{aligned}
				f_1(x)&=-x_1^2+1+\sum_{i=2}^n x_i^2\\
				f_2(x)&=x_1^2+5x_1-4+\sum_{i=2}^n x_i^2\\
				f_3(x)&=-x_1-\sum_{i=2}^n x_i^2
			\end{aligned}
		\end{equation*}
		so that $S=\{x\in\R^n:f_i(x)<0,i\in[3]\}$. We have $-7f_1(x)-3f_2(x)-15f_3(x)=4x_1^2+5\sum_{i=2}^n x_i^2+5$, corresponding to a positive definite matrix. Thus PDLC holds and therefore $\conv(S)$ is given by at most 6 aggregations. In fact, $\conv(S)$ is described by 4 aggregations: $$\conv(S)=\{x\in\R^n:f_1(x)<0,f_2(x)<0,f_1(x)+f_3(x)<0,f_2(x)+f_3(x)<0\}.$$ 
		
		\blue{A plot of Example \ref{example:4} when $n=2$ is given by Figure \ref{fig:4agg}. The set $S$ is represented by the black shaded region, which has two connected components. The two aggregations $f_1(x)+f_3(x)<0,f_2(x)+f_3(x)<0$ give us the two vertical lines that join the left and right tips of both components. }
		
	\end{example}
	
	\begin{figure}[h]
		\centering
		\includegraphics[scale=0.75]{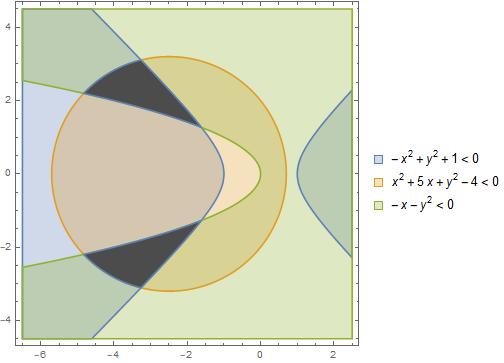}
		\caption{Plot of Example~\ref{example:4} with $n=2$. The black region represents $S$. }
		\label{fig:4agg}
	\end{figure}
	
	The bound of $m^2  - m$ can be improved in the special case where the quadratics $Q_i$ that defined $S$ span a linear space of dimension at most three. We have already shown in Theorem~\ref{thm:dep} that PDLC (or less, when the dimension of the span of the associated quadratic map is $2$) 
	is sufficient for the convex hull to be given by aggregations. We separate our discussion into two cases based on the dimension of the span of $Q_1,\dots,Q_m$.
	
	\paragraph{\emph{Span of $Q_1,\ldots,Q_m$ is two dimensional}}	
	
	Consider the conical hull of $Q_1,\ldots,Q_m$, which is a closed polyhedral cone of dimension two. If this cone is a linear subspace or a linear halfspace, i.e., there exists nonzero $\lambda\ge 0$ where $\sum_{i=1}^m \lambda_i Q_i=0$, then $S=\emptyset$. If $S\ne\emptyset$, then the conical hull of $Q_1,\ldots,Q_m$ is a closed pointed polyhedral cone of dimension two, which has exactly two extreme rays. Say the two extreme rays are generated by $Q_i,Q_j$ respectively, then all other $Q_k$ can be written as nonnegative combinations of $Q_i$ and $Q_j$. In this case $\conv(S)=\conv(S_i\cap S_j)$, reducing to the two quadratics case which is described in Theorem 1 of \cite{yildiran2009convex}. 
	
	\paragraph{\emph{Span of $Q_1,\ldots,Q_m$ is three dimensional}}	
	We make the following observations: if $S\ne\emptyset$, then the conical hull of $Q_1,\ldots, Q_m$, denoted by $\mathcal{C}$, must be a pointed closed polyhedral cone of dimension three, and we may assume without loss of generality that every $Q_i$ spans an extreme ray of $\mathcal{C}$. Then $\mathcal{C}$ has exactly $m$ facets, and each facet is generated by exactly two $Q_i$'s. 
	Given an aggregation that lies in the intersection of $\Omega_1$ and interior of $\mathcal{C}$, by Proposition \ref{prop:improve}, we may improve it using a positive definite combination until it touches the boundary of $\mathcal{C}$. Proposition \ref{prop:pairwise_endpoints} implies that intersection of all aggregations in $\Omega$ on a facet can be described by two endpoints. Since there are exactly $m$ facets, it follows that $2m$ aggregations suffice to describe the convex hull for the case of dependent quadratics. Therefore we obtain the following result.
	
	\begin{proposition}\label{prop:2m_agg}
		Let $n\ge 3$ and $f_1,\ldots,f_m$ be quadratic functions such that span of $Q_1,\ldots,Q_m$ is three dimensional, and there exists $\theta\in\R^m$ such that $\sum_{i=1}^m \theta_i Q_i\succ 0$. Suppose $S\ne\emptyset$ and $\conv(S)\ne\R^n$, then there exists $\Omega'\subseteq \Omega,|\Omega|\le 2m$ such that $\conv(S)=\bigcap_{\lambda\in\Omega'}S_{\lambda}$. 
	\end{proposition}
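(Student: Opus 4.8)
The plan is to first secure hidden hyperplane convexity for free (so that $\conv(S)$ equals an intersection of good aggregations by Theorem~\ref{thm:more_quadratics}), and then to exploit that the conical hull $\mathcal{C}:=\textup{cone}(Q_1,\dots,Q_m)$ is a three-dimensional pointed polyhedral cone, hence has only $m$ facets. For the first part: choose three matrices $Q_{i_1},Q_{i_2},Q_{i_3}$ forming a basis of $\span(Q_1,\dots,Q_m)$; expressing the positive definite combination $\sum_i\theta_iQ_i$ in this basis shows $Q_{i_1},Q_{i_2},Q_{i_3}$ satisfy PDLC, so (as the matrices have dimension $n+1\ge 4$) Corollary~\ref{cor:m23}(2) gives that they satisfy HHC, and Lemma~\ref{lem:HHC_preserved}(2) propagates HHC to all of $Q_1,\dots,Q_m$, hence to the homogeneous map $\hf$. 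By Theorem~\ref{thm:more_quadratics}, $\conv(S)=\bigcap_{\lambda\in\Omega}S_\lambda$; dropping the aggregations with $S_\lambda=\R^n$ (which are redundant) we may write $\conv(S)=\bigcap_{\lambda\in\Omega_1}S_\lambda$ with $\Omega_1=\Omega\setminus\{\lambda:S_\lambda=\R^n\}$ as in Theorem~\ref{thm:finite}.

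Next I would record the combinatorics of $\mathcal{C}$. Since $S\ne\emptyset$, no nonzero $\lambda\ge 0$ can have $Q_\lambda=0$ (otherwise $S\subseteq S_\lambda=\emptyset$), so $\mathcal{C}$ is pointed; as a finitely generated cone it is closed and polyhedral, and it is three-dimensional because its affine hull equals $\span(Q_1,\dots,Q_m)$. Deleting redundant constraints — those $f_i$ for which $Q_i$ is a nonnegative combination of the other $Q_j$'s, which changes neither $S$ nor $\mathcal{C}$ and only lowers $m$ — and identifying positive scalar multiples, I may assume every $Q_i$ spans a distinct extreme ray of $\mathcal{C}$. Then $\mathcal{C}$ has exactly $m$ facets $F_1,\dots,F_m$, each a two-dimensional face spanned by precisely two of the extreme rays.

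The heart of the proof is to show that aggregations with $Q_\lambda$ in the interior of $\mathcal{C}$ are never needed. Given $\lambda\in\Omega_1$ with $Q_\lambda\in\textup{int}(\mathcal{C})$, I would run the improvement of Proposition~\ref{prop:improve} along the positive definite direction, moving along $g(t)=Q_\lambda+tQ_\theta$ and taking the largest step for which $g(t)$ remains in $\mathcal{C}$ (after reparametrizing into nonnegative weights). The point that makes this work is that $S\ne\emptyset$ forces $Q_\theta=\sum_i\theta_iQ_i\notin\mathcal{C}$: otherwise $Q_\theta$ would be a nonzero nonnegative combination $\sum_i\nu_iQ_i$, whence $\sum_i\nu_if_i(x)=(x,1)^\top Q_\theta\,(x,1)>0$ for all $x$ and $S=\emptyset$. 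Thus a functional $\ell$ separating $Q_\theta$ from $\mathcal{C}$ (necessarily $\ell\ge 0$ on $\mathcal{C}$ and $\ell(Q_\theta)<0$) makes $\ell(g(t))\to-\infty$, so $g$ leaves $\mathcal{C}$ at a finite time $t^*>0$, with $g(t^*)\in\partial\mathcal{C}$ and $g(t^*)\ne 0$ — if $g(t^*)=0$ then $Q_\lambda=-t^*Q_\theta\prec 0$ would have $n+1\ge 4$ negative eigenvalues, contradicting $\lambda\in\Omega$. Writing $g(t^*)=Q_\mu$ with $\mu\ge 0$ nonzero, the construction underlying Proposition~\ref{prop:improve} gives $\mu\in\Omega_1$ and $S_\mu\subseteq S_\lambda$, now with $Q_\mu\in\partial\mathcal{C}$. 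Hence $\conv(S)=\bigcap\{S_\mu:\mu\in\Omega_1,\ Q_\mu\in\partial\mathcal{C}\}$. Finally, if $Q_\mu\in F_j$ and $\mu\ge 0$, the face property of $F_j$ forces $\textup{supp}(\mu)$ into the index set of the two extreme rays spanning $F_j$; so this intersection splits into $m$ pieces, one per facet, each an intersection of aggregations supported on a fixed pair of indices, and Proposition~\ref{prop:pairwise_endpoints} shows two aggregations (the endpoints, i.e.\ the roots of $\det(\alpha Q_u+(1-\alpha)Q_v)=0$) suffice for each piece. Collecting these over the $m$ facets produces $\Omega'\subseteq\Omega$ with $|\Omega'|\le 2m$ and $\conv(S)=\bigcap_{\lambda\in\Omega'}S_\lambda$.

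I expect the main obstacle to be the push-to-the-boundary step: checking that the improvement along the positive definite direction has a finite maximal step landing exactly on $\partial\mathcal{C}$ (this is the one place $S\ne\emptyset$ is genuinely used, via $Q_\theta\notin\mathcal{C}$), that it does not terminate at the apex, and — if one does not simply invoke Proposition~\ref{prop:improve} — that $\conv(S)\subseteq S_\mu$ is preserved. The last point should follow from the structure of $\Omega$: $S\subseteq S_\mu\subseteq S_\lambda$, $\conv(S)$ lies in a single convex component of $S_\lambda$ (Lemma~\ref{lem:Omega}), and if $S_\mu$ had two components both inside that component, their convex hull — which is all of $\R^n$ — would force $S_\lambda=\R^n$, contradicting $\lambda\in\Omega_1$; hence $S$ lies in one convex component of $S_\mu$ and $\conv(S)\subseteq S_\mu$. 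The remaining ingredients (pointedness and three-dimensionality of $\mathcal{C}$, that it has $m$ facets each spanned by two extreme rays, and the face-property reduction) are routine polyhedral geometry that I would spell out but do not anticipate difficulty with.
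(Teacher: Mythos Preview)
Your proposal is correct and follows essentially the same route as the paper's argument: establish HHC via a three-matrix basis and PDLC (so Theorem~\ref{thm:more_quadratics} applies), observe that $\mathcal{C}$ is a pointed three-dimensional polyhedral cone with $m$ facets after discarding redundant generators, push any interior good aggregation to the boundary along the positive-definite direction via Proposition~\ref{prop:improve}, and finish with Proposition~\ref{prop:pairwise_endpoints} on each facet. Your write-up is in fact more careful than the paper's sketch on the push-to-boundary step---you explicitly justify why $Q_\theta\notin\mathcal{C}$ (so the ray exits in finite time), why it cannot terminate at the apex, and how to reparametrize $g(t^\ast)=Q_\mu$ with $\mu\ge 0$ so that Proposition~\ref{prop:improve} applies with $\theta'=\mu-\lambda\in\Theta$---details the paper leaves implicit.
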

	
	\subsection{Closed Inequalities}\label{sec:closed0}
	Given quadratic functions $f_1,\ldots,f_m$, let $S=\{x:f_i(x)<0,i\in [m]\}$ be the set defined by open inequalities, and let $T=\{x:f_i(x)\le 0,i\in [m]\}$ be the one with closed inequalities. As usual let $Q_i=\begin{bmatrix}
		A_i & b_i\\ b_i^{\top} & c_i
	\end{bmatrix}$. Note $\conv(S)$ is always open but $\conv(T)$ may not be closed. In this section we study $G$, the interior of $\overline{\conv(T)}$. It is clear $G$ is convex, open and $\conv(S)\subseteq G$. 
	
	In \cite{dey2021obtaining} it was shown that when $G=\conv(S)$ and $\conv(S)$ is given by aggregations, then $\overline{\conv(T)}$ is given by the same aggregations after changing all open inequalities to closed. The original proof is only for the case of three quadratics with PDLC, but the same proof works for arbitrary number of quadratics with hidden hyperplane convexity.
	
	We do not make the assumption that $G =\conv(S)$, and show that \blue{$G$} is still given by aggregations of open inequalities, under \blue{HHC and} an additional technical assumption. 
	We now give an example where $G \not = \textup{conv}(S)$, which illustrates the delicate nature of closed inequalities. 
	
	\begin{example}[$G \not = \textup{conv}(S)$]\label{example:closed}
		Let $n\ge 2$ and consider the following two quadratic functions on $\R^n$:
		\begin{equation*}
			\begin{aligned}
				f_1(x)&=-x_1^2+x_1 \\
				f_2(x)&=-1+\sum_{i=1}^n x_i^2    
			\end{aligned}
		\end{equation*}
		Then $\conv(S)=S=\{x:f_1(x)<0,f_2(x)<0\}=\{x:x_1<0,\|x\|_2<1\}$. Note $T=\overline{S}\cup\{e_1\}$, and $G$ strictly contains $\conv(S)$. It turns out that $G$ is also given by aggregations $G=\{x:f_2(x) < 0,2f_1(x)+f_2(x)<0\}$, which is different from the ones defining $\conv(S)$. 
		
		\blue{A plot of Example \ref{example:closed} when $n=2$ is given in Figure \ref{fig:closed}. In the dimension 2 case, the set $G$ needs to contain the open triangle with vertices $(1,0),(0,1),(0,-1)$. } 
	\end{example}
	
	\begin{figure}[h]
		\centering
		\includegraphics[scale=0.75]{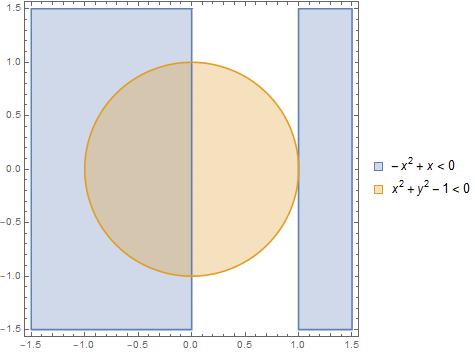}
		\caption{Plot of Example~\ref{example:closed} with $n=2$. }
		\label{fig:closed}
	\end{figure}
	
	We now state our main theorem for closed inequalities.
	
	\begin{theorem}\label{prop:agg_interior}
		Given quadratic functions $f_1,\ldots,f_m$, let $T=\{x:f_i(x)\le 0,i\in [m]\}$ and $G=\textup{int}(\overline{\conv(T)})$. Assume $Q_1,\ldots,Q_m$ satisfy hidden hyperplane convexity $\emptyset\subsetneq G\subsetneq\R^n$, and furthermore $Q_{\lambda}\ne 0$ for all nonzero $\lambda\ge 0$. Then $G=\bigcap_{\lambda\in\Omega_T}S_{\lambda}$, where $S_{\lambda}=\{x:\sum_{i=1}^m\lambda_if_i(x)<0\}$ and $\Omega_T\subseteq \R_+^m\setminus \{0\}$ is the set of $\lambda$ where $Q_{\lambda}=\sum_{i=1}^m\lambda_i Q_i$ has at most one negative eigenvalue and $G\subseteq S_{\lambda}$. 
	\end{theorem}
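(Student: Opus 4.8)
The plan is to reduce the closed-inequality statement to the open-inequality machinery of Theorem~\ref{thm:more_quadratics}, using $G=\operatorname{int}(\overline{\conv(T)})$ as a surrogate for $\conv(S)$ and exploiting the extra hypothesis $Q_\lambda\ne 0$ for all nonzero $\lambda\ge 0$. First I would record the easy inclusion: for every $\lambda\in\Omega_T$ we have $G\subseteq S_\lambda$ by definition, and since $S_\lambda$ is open this gives $G\subseteq\bigcap_{\lambda\in\Omega_T}S_\lambda$. The substance is the reverse inclusion. The natural strategy is to fix a point $y\notin G$ and produce $\lambda\in\Omega_T$ with $F_\lambda(y)\ge 0$, i.e.\ $y\notin S_\lambda$; equivalently, separate $y$ from the convex open set $G$ by an aggregated quadratic.

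The key technical step is to set up an appropriate analog of the homogeneous S-lemma (proved under HHC in Section~\ref{sec:more_quadratics} for the open case) that applies to $G$. Since $G$ is convex, open, nonempty, and a proper subset of $\R^n$, I would work with its homogenization $\hat G = \{(x,t): t>0,\ x/t\in G\}\cup\{(x,0):\ x\in \text{rec.\ directions}\}$, or more precisely with the closed convex cone $K=\overline{\{(x,t)\in\R^{n+1}: t>0,\ x/t\in G\}}$. The crucial observation I would try to establish is that $K$ is contained in the ``forbidden cone'' of the homogeneous quadratic map $\hf$ — that is, the interior directions of $K$ land where all $\hf_i<0$ is approachable, so that HHC of $\hf$ lets us invoke convexity of $\operatorname{image}(\hf|_H)$ for suitable hyperplanes $H$ and run exactly the separation argument from the proof of Theorem~\ref{thm:more_quadratics}. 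The hypothesis $Q_\lambda\ne 0$ for nonzero $\lambda\ge 0$ is what I expect to use to rule out the degenerate aggregations: it guarantees that any separating $\lambda$ we extract has $Q_\lambda$ genuinely defining a nontrivial quadratic region (preventing $S_\lambda=\R^n$ from the zero matrix but still possibly $S_\lambda$ a halfspace), and more importantly it should force the ``good'' aggregation produced to actually satisfy $G\subseteq S_\lambda$ rather than some weaker containment — this is the place where the closed-set subtleties (low-dimensional components of $T$, the point $e_1$ in Example~\ref{example:closed}) get absorbed, because passing to $G=\operatorname{int}(\overline{\conv(T)})$ already discards them and the nonzero-matrix condition prevents a spurious aggregation from reintroducing them.

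Concretely, the steps in order: (i) verify $G$ is convex and open and $\conv(S)\subseteq G$ (stated in the text, but I would make it precise); (ii) show that $G$ equals the interior of $\conv(\hat T')$ projected appropriately, relating $G$ to a set defined by the same quadratic forms so that the homogeneous picture is available; (iii) for $y\notin G$, apply the HHC-based homogeneous S-lemma (the analog of the lemma used for Theorem~\ref{thm:more_quadratics}) to the hyperplane through the relevant directions to obtain nonzero $\lambda\ge 0$ with $F_\lambda^h\ge 0$ on the cone over $G\cup\{y\}$-type configuration, hence $F_\lambda(y)\ge 0$ and $F_\lambda<0$ on $G$; (iv) use $Q_\lambda\ne 0$ together with $\conv(S)\subseteq G\subseteq S_\lambda$ to conclude $Q_\lambda$ has at most one negative eigenvalue (if it had two, $S_\lambda$ would fail to be ``one-sided'' and could not contain the convex set $G$ while excluding $y$ on its boundary — this mirrors Lemma~\ref{lem:Omega}), so $\lambda\in\Omega_T$; (v) conclude $y\notin\bigcap_{\lambda\in\Omega_T}S_\lambda$.

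\textbf{Main obstacle.} I expect the hard part to be step (ii)–(iii): namely, justifying that the HHC hypothesis on $Q_1,\dots,Q_m$ (equivalently on $\hf$) still yields the required convexity of the image of the restricted quadratic map even though we have replaced $\conv(S)$ by the a priori larger set $G$. One must check that the hyperplane to which HHC is applied is the right one and that the separating functional extracted from convexity of $\operatorname{image}(\hf|_H)$ genuinely gives a valid inequality for all of $G$, not merely for $\overline{S}$; handling the recession directions of $G$ (the $t=0$ slice of the homogenization) and ensuring the argument does not secretly need $G=\conv(S)$ is the delicate point. The condition $Q_\lambda\ne 0$ for nonzero $\lambda\ge 0$ is precisely the lever that closes this gap, and making that usage rigorous — rather than hand-waving that "degenerate aggregations are excluded" — is where the real work lies.
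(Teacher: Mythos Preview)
Your skeleton is right (separate $y\notin G$ by a hyperplane, run the HHC argument to obtain $\lambda$, check $\lambda\in\Omega_T$), but you misjudge both where the real work lies and the role of the hypothesis $Q_\lambda\ne 0$.

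First, the HHC step needs no adaptation whatsoever. Since $\conv(S)\subseteq G$, the affine hyperplane $H=\{x:\alpha^\top x=\alpha^\top y\}$ that separates $y$ from $G$ is \emph{a fortiori} valid for $\conv(S)$; so Lemma~\ref{lem:homog_separation} applies verbatim to $S$, giving $S^h\cap\hH=\emptyset$, and the exact argument from the proof of Theorem~\ref{thm:more_quadratics} produces nonzero $\lambda\ge 0$ with $Q_\lambda$ PSD on $\hH$. There is no need to homogenize $G$, analyse recession directions, or build a cone $K$; the homogeneous picture is still that of $S$, not of $G$.

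Second, the assumption $Q_\lambda\ne 0$ is not the bridge between $G$ and $\conv(S)$. Its only role is this: by interlacing, $Q_\lambda$ (being PSD on the hyperplane $\hH$) has at most one negative eigenvalue; if it had none it would be PSD, forcing $T\times\{1\}\subseteq\ker Q_\lambda$, which by $Q_\lambda\ne 0$ is a proper subspace and hence $G=\emptyset$, a contradiction. That is all.

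The step your proposal does not contain is the argument that $G\subseteq S_\lambda$, and this is the genuinely new ingredient relative to the open case. The paper does it by passing to the \emph{closed} aggregation $T_\lambda=\{x:F_\lambda(x)\le 0\}$: trivially $T\subseteq T_\lambda$. Now $\overline{\conv(T)}=\overline{G}$ lies in the closed halfspace $\{x:\alpha^\top x\ge\alpha^\top y\}$, so $T\times\{1\}\subseteq\hH_+$. Since $Q_\lambda$ is PSD on $\hH$, the two closed convex pieces of $T_\lambda$ are separated by $H$, hence $T$ is contained in the single piece $(T_\lambda)_+$ on the $\hH_+$ side. As $(T_\lambda)_+$ is closed and convex, $\overline{\conv(T)}\subseteq(T_\lambda)_+$, and taking interiors gives $G\subseteq(S_\lambda)_+\subseteq S_\lambda$. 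Combined with $F_\lambda(y)\ge 0$ (because $(y,1)\in\hH$), this yields $\lambda\in\Omega_T$ and $y\notin S_\lambda$.

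In short: the ``obstacle'' you flagged dissolves because HHC is still applied through $S$, and the actual crux---showing the aggregation you found is valid for all of $G$---is handled by the simple geometric observation that $T$ lies on one side of the separating hyperplane and hence in one convex piece of the closed aggregation.
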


	\begin{remark}
		The condition that $Q_{\lambda}\ne 0$ for all nonzero $\lambda\ge 0$ is needed in our proof, and it is easy to check computationally using linear programming. This condition is satisfied if we assume PDLC and $S\ne\emptyset$. It allows for situations similar to Example~\ref{example:closed}. 
	\end{remark}
	
	A proof of Proposition~\ref{prop:agg_interior} is presented in Section~\ref{sec:closed}.
	
	\section{Conclusions and open questions}\label{sec:conclusion}

	We showed 
	that for a set described by any number of quadratics, hidden hyperplane convexity is a sufficient condition for convex hull to be given by good aggregations, assuming $S\ne\emptyset$ and $\conv(S)\blue{\ne}\R^n$. Furthermore Theorem \ref{thm:diagonal} and Example \ref{example:hidden_convexity_bad} together show that hidden hyperplane convexity is not necessary while hidden convexity is not sufficient. 
	
	We conjecture that even with hidden hyperplane convexity there exist sets $S$ defined by quadratic inequalities where infinitely many good aggregations are needed to define the convex hull.
	
	\begin{conjecture}\label{conj:4_infinite}
		There exists a set $S$ described by quadratic inequalities satisfying hidden hyperplane convexity, such that $\textup{conv}(S)$ cannot be described by finitely many good aggregations. 
	\end{conjecture}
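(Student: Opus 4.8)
\noindent\emph{A strategy towards Conjecture~\ref{conj:4_infinite}.}
The plan is to build an explicit family of quadratics $f_1,\ldots,f_m$ whose homogenization $\hf$ satisfies HHC, yet for which $\partial\conv(S)$ contains a smooth strictly convex patch $\Sigma$ that lies on no quadric hypersurface of $\R^n$. Granting such a configuration, infiniteness is forced by a pigeonhole argument. Since $\conv(S)$ is open, any finite description $\conv(S)=\bigcap_{j=1}^N S_{\lambda^{(j)}}$ with $\lambda^{(j)}\in\Omega$ gives, for each $p\in\partial\conv(S)$, some $j$ with $F_{\lambda^{(j)}}(p)\ge 0$, while $p\in\overline{\conv(S)}\subseteq\overline{S_{\lambda^{(j)}}}$ gives $F_{\lambda^{(j)}}(p)\le 0$; hence $\partial\conv(S)\subseteq\bigcup_{j=1}^N\{F_{\lambda^{(j)}}=0\}$. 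In particular the relatively open patch $\Sigma$ is covered by the finitely many relatively closed sets $\Sigma\cap\{F_{\lambda^{(j)}}=0\}$, so one of them has nonempty interior in $\Sigma$, i.e.\ some quadric $\{F_{\lambda^{(j_0)}}=0\}$ contains a relatively open subset of $\Sigma$; by analytic continuation along $\Sigma$ this contradicts $\Sigma$ lying on no quadric. The role of HHC here is exactly to guarantee, via Theorem~\ref{thm:more_quadratics}, that $\conv(S)=\bigcap_{\lambda\in\Omega}S_\lambda$ in the first place, so that $\Sigma$ is genuinely exposed as an ``envelope'' of the parameterized family of aggregation quadrics $\{F_\lambda=0:\lambda\in\R^m_+\setminus\{0\}\}$ rather than an artifact of some non-aggregation inequality.

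The key design constraint is dictated by Theorem~\ref{thm:finite}: HHC together with ``every triple satisfies PDLC'' forces finiteness, so any valid example must have HHC hold while PDLC fails for some triple of the $Q_i$ — the cleanest target being $m=3$ with HHC but no positive definite linear combination. The first milestone is therefore to show that such triples exist at all: by Observation~\ref{obs:red} one needs $Q_1,Q_2,Q_3$ of size $n+1$ with no positive definite combination, yet with $W^{\top}Q_1W,W^{\top}Q_2W,W^{\top}Q_3W$ hidden convex for every full-rank $W\in\R^{(n+1)\times n}$. The natural way to arrange this is to make the compressed triple satisfy PDLC on \emph{every} hyperplane (so Calabi's theorem applies hyperplane by hyperplane) even though no global PDLC holds; failing that, one falls back on hidden-convexity certificates available for structured maps (as in Example~\ref{example:separable} and Theorem~\ref{thm:HHTnontriv}). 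With such a triple in hand one then selects the affine data $b_i,c_i$ so that the aggregation quadrics $\{F_\lambda=0\}$ genuinely ``pivot'' as $\lambda$ ranges over the simplex, producing an envelope on $\partial\conv(S)$; running the homogeneous S-lemma argument underlying Theorem~\ref{thm:more_quadratics} should then exhibit $\Sigma$ as a semialgebraic surface of degree $\ge 3$, which one checks is annihilated by no degree-two polynomial.

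An alternative, possibly more robust, route avoids computing $\Sigma$ directly. By the Remark following Theorem~\ref{thm:finite}, if $\conv(S)$ were described by finitely many good aggregations then $\overline{\conv(S)}$ would be second-order-cone representable. Hence it suffices to produce an HHC example for which $\overline{\conv(S)}$ is provably \emph{not} SOCr — for instance by arranging that $\overline{\conv(S)}$, or a low-dimensional section of it, is affinely isomorphic to a convex body known to fail SOCr-representability (such as a suitable $\ell_p$-ball section), exploiting that an infinite intersection of SOCr sets need not be SOCr. Either route reduces Conjecture~\ref{conj:4_infinite} to the same core: engineering the defining quadratics.

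The principal obstacle is that the two requirements pull in opposite directions. HHC is a strong, global-looking ``hidden convexity at every hyperplane'' condition, and all currently known HHC examples owe their HHC to PDLC-type structure or to separability/factorization — precisely the features that, by Theorems~\ref{thm:finite}, \ref{thm:dep} and \ref{thm:cheese}, force finiteness. One must therefore locate a configuration with enough hidden convexity to satisfy HHC but enough curvature freedom (i.e.\ enough failure of PDLC) to generate a non-quadric envelope, and then certify HHC for that concrete configuration, for which we presently have no general tool beyond Dines and Calabi; a bespoke convexity argument, or numerically guided exact computation, will likely be needed. It is this difficulty that leaves the statement as a conjecture.
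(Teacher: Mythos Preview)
The statement is a \emph{conjecture} in the paper, not a theorem: the paper offers no proof, no construction, and no proof sketch beyond the one-line remark that ``even with hidden hyperplane convexity there exist sets $S$ \ldots\ where infinitely many good aggregations are needed.'' There is nothing to compare your proposal against.

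Your write-up is honest about this: you present a \emph{strategy}, not a proof, and you explicitly identify the core missing ingredient---an explicit triple $Q_1,Q_2,Q_3$ satisfying HHC but not PDLC, with affine data chosen so that $\partial\conv(S)$ contains a non-quadric patch---and you correctly note in the last paragraph that this is exactly why the statement remains open. The pigeonhole/Baire argument reducing finiteness to ``$\partial\conv(S)$ is covered by finitely many quadrics'' is sound, and the alternative SOCr-obstruction route is a reasonable heuristic. But neither route is executed: no candidate quadratics are exhibited, no HHC verification is carried out, and no non-quadric envelope is produced. So this is a plausible research plan, not a proof, and it does not resolve the conjecture any more than the paper does.
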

	
	In Theorem \ref{thm:finite} we showed that $m^2-m$ good aggregations describe the convex hull under hidden hyperplane convexity and every triple PDLC assumption. In particular when $m =3$ and PDLC holds, six aggregations suffice, and we gave an example where four good aggregations are needed. We have not discovered an example which requires more than four good aggregations to describe the convex hull, but we conjecture that such examples exist. 
	\begin{conjecture}\label{conj:6}
		There exists a set $S$ described by three quadratic inequalities satisfying PDLC, such that $\textup{conv}(S)$ is described by using exactly six aggregations. 
	\end{conjecture}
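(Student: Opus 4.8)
The plan is to exhibit an explicit triple $f_1,f_2,f_3$ of quadratics on $\mathbb{R}^n$, for a suitably chosen (small) dimension $n$, satisfying PDLC with $\emptyset\subsetneq\conv(S)\subsetneq\mathbb{R}^n$, for which all six candidate aggregations produced by Theorem~\ref{thm:finite} are simultaneously present and irredundant. Recall that by Corollary~\ref{cor:6_agg} at most six aggregations are ever needed, and by the last part of Theorem~\ref{thm:finite} these come in three pairs: for each $\{u,v\}\subseteq\{1,2,3\}$ there are at most two aggregations of the form $\alpha e_u+(1-\alpha)e_v$, with $\alpha$ a root of $\det(\alpha Q_u+(1-\alpha)Q_v)=0$ whose aggregated matrix has at most one negative eigenvalue, whose sublevel set contains $\conv(S)$, and is not all of $\mathbb{R}^n$. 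So the construction must guarantee three things at once: (i) each of the three pencils $\alpha Q_u+(1-\alpha)Q_v$ has at least two roots passing this ``good aggregation in $\Omega_1$'' test; (ii) the resulting six sets $S_{\lambda^{(k)}}$ are pairwise distinct; and (iii) none of them contains the intersection of the other five, i.e.\ $\conv(S)$ has six distinct curved facets, one on $\partial S_{\lambda^{(k)}}$ for each $k$.

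Concretely I would proceed as follows. Since the sphere/linear family of Theorem~\ref{thm:cheese} provably needs at most $|P||N|+|P|+|Z|\le 4$ aggregations when $|P|+|N|+|Z|=3$, the example cannot be built from balls and halfspaces; it must use genuinely indefinite quadratics with distinct ``axes''. A natural starting point is Example~\ref{example:4}, which needs exactly the four aggregations of support $\{1\},\{2\},\{1,3\},\{2,3\}$: there the pencil $\{1,2\}$ contributes only the two degenerate endpoints $\alpha\in\{0,1\}$, while the pencils $\{1,3\}$ and $\{2,3\}$ each contribute a single interior root. The idea is to perturb the $Q_i$ (adding controlled off-diagonal terms, possibly using one or two auxiliary coordinates) so that each degenerate double root of every pencil splits into two genuine interior roots still yielding matrices with exactly one negative eigenvalue and sublevel sets containing $\conv(S)$. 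One would compute $\det(\alpha Q_u+(1-\alpha)Q_v)$ symbolically in the perturbation parameter $\varepsilon$, verify that for small $\varepsilon>0$ each pencil has the desired two simple roots in the relevant interval, and note that PDLC and $\conv(S)\ne\mathbb{R}^n$ are open conditions, hence survive small perturbations once they hold at $\varepsilon=0$.

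To make the irredundancy step tractable I would build in a symmetry: arrange the perturbation so that a cyclic group $\mathbb{Z}/3$ (acting by a linear change of variables together with a cyclic relabeling of $f_1,f_2,f_3$), and perhaps an additional involution, acts on $\conv(S)$, permuting the three pencils and hence the six candidate aggregations in one or two orbits. It then suffices to prove that \emph{one} aggregation $S_{\lambda^{(1)}}$ is necessary, and symmetry transports this to the remaining orbit representative(s). Necessity of a single aggregation is checked as in Examples~\ref{example:hidden_convexity_bad} and~\ref{example:sphere_n+1}: produce a point $p$ with $p\in S_{\lambda^{(\ell)}}$ for all $\ell\ne 1$ but $p\notin S_{\lambda^{(1)}}$ and $p\in\overline{\conv(S)}$ --- for instance a boundary point of $\conv(S)$ lying on $\partial S_{\lambda^{(1)}}$ and in the interior of every other $S_{\lambda^{(\ell)}}$. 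Combining Theorem~\ref{thm:more_quadratics} (PDLC implies HHC, so $\conv(S)=\bigcap_{\lambda\in\Omega_1}S_\lambda$) with the finiteness reduction in Theorem~\ref{thm:finite} (this intersection equals that of the $\le 6$ candidates) then shows $\conv(S)$ is the intersection of exactly these six sets and of no proper subset.

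The hard part will be controlling $\conv(S)$ precisely enough to certify both that each pencil really yields two \emph{admissible} roots --- the conditions ``one negative eigenvalue'' and ``sublevel set $\supseteq\conv(S)$'' couple the three constraints in a way that is not local to a single pencil --- and that the six facets are genuinely distinct and irredundant. For three indefinite quadratics $\conv(S)$ can be intricate, so the symmetry scaffold is essential: it simultaneously pins down the boundary structure and reduces ``six facets needed'' to one explicit witness computation. A fallback, should an explicit symmetric perturbation prove unwieldy, is a genericity argument: show that within a suitable parameter family, the triples for which some pencil has a degenerate relevant root, or for which some candidate aggregation is redundant, form a proper algebraic subset, so a generic member works. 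But extracting a verifiable example from ``generic'' reduces to the same computation and is, in effect, the same obstacle.
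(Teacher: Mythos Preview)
This statement is Conjecture~\ref{conj:6}, which the paper explicitly leaves \emph{open}: the authors write ``We have not discovered an example which requires more than four good aggregations to describe the convex hull, but we conjecture that such examples exist.'' There is no proof in the paper to compare against; you are attempting to settle an open problem, and what you have written is a plan, not a proof.

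That said, your plan has a concrete obstruction. To obtain six \emph{distinct} aggregations from the bound $m^2-m=6$, each of the three pairs $\{u,v\}$ must contribute two aggregations with support \emph{exactly} $\{u,v\}$; equivalently, the two relevant generalized eigenvalues for each pencil must lie in the open interval $(0,1)$. In particular none of $e_1,e_2,e_3$ may lie in $\Omega_1$, which forces each $Q_i$ to have at least two negative eigenvalues (or to define all of $\mathbb{R}^n$). But in Example~\ref{example:4} both $Q_1$ and $Q_2$ have exactly one negative eigenvalue, and $f_1,f_2$ are themselves facet-defining good aggregations. Having exactly one negative eigenvalue and $S_{e_i}\ne\mathbb{R}^n$ are open conditions, so any \emph{small} perturbation of Example~\ref{example:4} still has $e_1,e_2\in\Omega_1$; these singletons are then shared between two pencils and the count of distinct aggregations stays at most four (or five). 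Your description of ``splitting a degenerate double root'' does not match what is actually happening: the endpoints $\alpha\in\{0,1\}$ are not double roots but boundary points of $[0,1]$, and they do not move under perturbation in the way roots of a polynomial do.

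A construction that could work must start from quadratics $Q_1,Q_2,Q_3$ each with $\nu(Q_i)\ge 2$, PDLC holding, $S\ne\emptyset$, $\conv(S)\ne\mathbb{R}^n$, and for each pair the interval(s) in $\Lambda$ lying strictly inside $(0,1)$ with both endpoints irredundant. Your symmetry idea (a cyclic $\mathbb{Z}/3$ action permuting the $Q_i$) is the right scaffold to make this tractable, but you would need to build the example from scratch with $\nu(Q_i)\ge 2$ enforced, not perturb Example~\ref{example:4}. Until an explicit triple is produced and the six-facet irredundancy verified, the conjecture remains open.
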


	We showed that aggregations always certify when $S=\emptyset$, and almost always certify when $\conv(S)=\R^n$, except for one unresolved case, where $\sum_{i = 1}^m \lambda_i f_i$ is a negative constant for all nonzero $\lambda\ge 0$ such that $\sum_{i = 1}^m \lambda_i A_i\succeq 0$. We conjecture that under hidden hyperplane convexity assumption, in this case $\conv(S)$ is also $\R^n$. In other words we have the following conjecture, which states aggregations always provide certificates when $\conv(S)=\R^n$: 
	
	\begin{conjecture}\label{conj:Rn_general}
		Let $f_i(x)=x^{\top}A_ix+2b_i^{\top}x+c_i,i\in [m]$ be quadratic functions where the associated homogeneous quadratic map $\hf=(\hf_1,\ldots,\hf_m):\R^{n+1}\to\R^m$ has hidden hyperplane convexity. Assume $S=\{x:f_i(x)<0,x\in [m] \}$ is nonempty. Then $\conv(S)\ne\R^n$ if and only if there exists nonzero $\lambda\ge 0$ such that $\sum_{i =1}^m \lambda_i A_i\succeq 0$, and $\sum_{i=1}^m \lambda_i f_i$ is not a negative constant.  
	\end{conjecture}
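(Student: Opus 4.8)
The plan is to prove the two implications separately; hidden hyperplane convexity is needed only for the forward direction, and even there the whole difficulty collapses onto a single exceptional configuration. For the easy direction, suppose $\lambda\ge 0$ is nonzero with $A_\lambda:=\sum_i\lambda_iA_i\succeq 0$ and with $A_\lambda\ne 0$ or $b_\lambda:=\sum_i\lambda_ib_i\ne 0$. Then $F_\lambda:=\sum_i\lambda_if_i$ is convex, so $S_\lambda=\{x:F_\lambda(x)<0\}$ is a convex set containing $S$, whence $\conv(S)\subseteq S_\lambda$. If $A_\lambda\ne 0$, choose $v$ with $v^\top A_\lambda v>0$; then $F_\lambda(tv)\to+\infty$ as $t\to\infty$, so $S_\lambda\subsetneq\R^n$. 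If instead $A_\lambda=0$ but $b_\lambda\ne 0$, then $S_\lambda$ is an open halfspace. In either case $\conv(S)\subseteq S_\lambda\subsetneq\R^n$. (This uses neither HHC nor $S\ne\emptyset$.)

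For the forward direction, assume $S\ne\emptyset$ and $\conv(S)\ne\R^n$. Applying HHC to the hyperplane $\mathcal{E}=\{x_{n+1}=0\}\subseteq\R^{n+1}$ shows that the image of $\hf|_{\mathcal{E}}$, namely $\mathcal{I}_A:=\{(x^\top A_1x,\dots,x^\top A_mx):x\in\R^n\}$, is convex; Proposition~\ref{prop:Rn} (in contrapositive form) then yields a nonzero $\lambda\ge 0$ with $A_\lambda\succeq 0$. If some such $\lambda$ additionally has $A_\lambda\ne 0$ or $b_\lambda\ne 0$, we are done. So assume the \emph{degenerate case}: every nonzero $\lambda\ge 0$ with $A_\lambda\succeq 0$ satisfies $A_\lambda=0$ and $b_\lambda=0$, hence $F_\lambda\equiv c_\lambda$ with $c_\lambda<0$ (since $F_\lambda<0$ on the nonempty set $S$). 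It now suffices to show the degenerate case forces $\conv(S)=\R^n$, contradicting the hypothesis.

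Fix a nonzero $\lambda^{(0)}\ge 0$ with $F_{\lambda^{(0)}}\equiv c_{\lambda^{(0)}}<0$ and put $I=\{i:\lambda^{(0)}_i>0\}$. If $|I|=1$, say $I=\{i_0\}$, then $f_{i_0}$ is a negative constant, so the constraint $f_{i_0}<0$ is vacuous and $S$ is cut out by the $m-1$ quadratics $(f_i)_{i\ne i_0}$, whose homogenization still satisfies HHC by Lemma~\ref{lem:HHC_preserved}(2); one induction step on $m$ disposes of this subcase, with the base $m=1$ immediate (if $A_1$ has a negative eigenvalue then $\{w:w^\top A_1w<0\}$ is a nonempty open cone, hence spans $\R^n$, so $\conv(S)=\R^n$; thus $\conv(S)\ne\R^n$ with $S\ne\emptyset$ forces $A_1\succeq 0$ with $F_1$ nonconstant, and $\lambda=e_1$ works). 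What remains --- and this is the crux --- is the degenerate case in which \emph{every} nonzero aggregation $\lambda\ge 0$ with $F_\lambda$ constant has $|\operatorname{supp}\lambda|\ge 2$.

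Here $\mathcal{I}_A$ is contained in the hyperplane $\{y:\ip{\lambda^{(0)}}{y}=0\}$, which meets the open negative orthant only at the origin since $\lambda^{(0)}\ge 0$ is nonzero; consequently there is no direction $w$ with $w^\top A_iw<0$ for all $i$, and the usual recession-direction argument cannot be applied verbatim. The route I would pursue is to prove $\conv(S)=\R^n$ by producing, through a single point $x\in S$, directions $w_1,\dots,w_n$ spanning $\R^n$ with $x+\R w_j\subseteq\conv(S)$ --- which is enough, since then $\conv(S)\supseteq x+\span(w_1,\dots,w_n)=\R^n$. The candidate directions lie in $W:=\{w:w^\top A_iw\le 0,\ i\in[m]\}$: by the degeneracy, $\operatorname{cone}(A_1,\dots,A_m)$ meets the positive semidefinite cone only at $0$, so separating it from the positive definite cone, together with the convexity and conicity of $\mathcal{I}_A$, should produce a nonzero $w\in W$; and for any $w\in W$ the one-variable restrictions $t\mapsto f_i(x+tw)$ have nonpositive leading coefficient and are negative at $t=0$, hence negative for all large $|t|$ --- so that $x+\R w\subseteq\conv(S)$ --- \emph{unless} $w^\top A_iw=0$ while $(A_ix+b_i)^\top w\ne 0$ for some $i$. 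The hard step, which is exactly what the conjecture asserts, is to show that one can always rotate $w$ inside $W$ (and pick $x\in S$ suitably) to kill those residual linear terms while still spanning $\R^n$; equivalently, one must strengthen Proposition~\ref{prop:Rn} so that its lone leftover hypothesis --- every positive semidefinite aggregation $\lambda$ has $A_\lambda=0$, $b_\lambda=0$, $c_\lambda<0$ --- already forces $\conv(S)=\R^n$ under HHC. I expect this to be the main difficulty.
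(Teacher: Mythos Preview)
This statement is presented in the paper as a \emph{conjecture} (Conjecture~\ref{conj:Rn_general}), not a theorem; the paper gives no proof. Your proposal correctly establishes exactly what the paper establishes: the backward implication (discussed in the paragraph preceding Proposition~\ref{prop:Rn}), and the reduction of the forward implication, via Proposition~\ref{prop:Rn} applied on the hyperplane $\mathcal{E}=\{x_{n+1}=0\}$, to the single ``degenerate'' configuration in which every nonzero $\lambda\ge 0$ with $A_\lambda\succeq 0$ satisfies $A_\lambda=0$, $b_\lambda=0$, $c_\lambda<0$. This is precisely the case the paper singles out as unresolved in Section~\ref{sec:conclusion}.

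Your attempt at the degenerate case is not a proof, and you say as much. The spanning-directions strategy you sketch---finding $w_1,\dots,w_n\in W=\{w:w^\top A_iw\le 0\ \forall i\}$ spanning $\R^n$ with $x+\R w_j\subseteq\conv(S)$---runs into a real obstruction: when $w^\top A_iw=0$, the restriction $t\mapsto f_i(x+tw)$ is affine in $t$, and nothing in your argument controls the sign of the linear coefficient $(A_ix+b_i)^\top w$. The assertion that one can ``rotate $w$ inside $W$'' to kill these linear terms while still spanning $\R^n$ is exactly the content of the conjecture, not a step towards it. (Your induction on $m$ for the $|I|=1$ subcase is fine as a reduction, but it only strips off trivially redundant constraints and does not touch the core difficulty.) In short, the proposal is a faithful reduction to the same open problem the paper identifies, not a resolution of it.
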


	
	
	
	\section{HHC for a special class of quadratic forms}\label{sec:HHC}
	
	Our goal in this section is to prove that a special class of quadratic maps satisfies hidden hyperplane convexity. We first study the image of one particular quadratic map. 
	
	\begin{proposition}\label{prop:bd_soc}
		Let $n\ge 2$ and $f_0(x)=\sum_{i=1}^n x_i^2$, $f_i(x)=x_1x_i,1\le i\le n$ be $(n+1)$ quadratic forms on $\R^n$. Then the image of corresponding quadratic map is given by $$\{(f_0(x),\ldots,f_n(x)):x\in\R^n\}=\{(y_0,\ldots,y_n):y_0y_1=\sum_{i=1}^n y_i^2,y_0\ge 0,y_1\ge 0\},$$ which is linearly isomorphic to the boundary of second-order cone (Lorentz cone) in $\R^{n+1}$, given by $\{(z_0,\ldots,z_n):z_0^2=\sum_{i=1}^n z_i^2,z_0\ge 0 
		\}$. 
	\end{proposition}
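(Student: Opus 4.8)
The plan is to establish the two stated equalities: first that the image of the quadratic map $\varphi = (f_0, f_1, \ldots, f_n)$ equals the set $K := \{(y_0,\ldots,y_n) : y_0 y_1 = \sum_{i=1}^n y_i^2,\ y_0 \ge 0,\ y_1 \ge 0\}$, and second that $K$ is linearly isomorphic to the boundary of the Lorentz cone. I would begin with the easy inclusion $\operatorname{image}(\varphi) \subseteq K$: for any $x \in \mathbb{R}^n$, set $y_i = f_i(x)$. Then $y_0 = \sum x_i^2 \ge 0$ and $y_1 = x_1^2 \ge 0$, while $y_0 y_1 = x_1^2 \sum_{i=1}^n x_i^2 = \sum_{i=1}^n (x_1 x_i)^2 = \sum_{i=1}^n y_i^2$, using $y_1 = x_1 x_1 = x_1^2$. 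This is a direct substitution.

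The substantive direction is $K \subseteq \operatorname{image}(\varphi)$: given $(y_0,\ldots,y_n) \in K$, I must exhibit $x \in \mathbb{R}^n$ with $\varphi(x) = y$. I would split into cases on whether $y_1 = 0$. If $y_1 = 0$, then the defining relation forces $\sum_{i=1}^n y_i^2 = 0$, so all $y_i = 0$ for $i \ge 1$; since $y_0 \ge 0$ is unconstrained, take $x = (0, \ldots, 0, \sqrt{y_0})$, which gives $f_0(x) = y_0$ and $f_i(x) = x_1 x_i = 0$ for $i \ge 1$ (note $x_1 = 0$ so indeed $f_1(x) = 0 = y_1$). Wait — I need $f_0(x) = y_0$ and all $x_1 x_i = 0$; putting the mass on coordinate $n$ with $x_1 = 0$ works only if $n \ge 2$, which is our hypothesis. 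If $y_1 > 0$, then necessarily $y_0 > 0$ as well (since $y_0 y_1 = \sum y_i^2 \ge y_1^2 > 0$). Set $x_1 = \sqrt{y_1}$ and $x_i = y_i / x_1 = y_i/\sqrt{y_1}$ for $i = 2, \ldots, n$. Then $f_i(x) = x_1 x_i = y_i$ for $2 \le i \le n$, and $f_1(x) = x_1^2 = y_1$, and $f_0(x) = x_1^2 + \sum_{i=2}^n x_i^2 = y_1 + \sum_{i=2}^n y_i^2/y_1 = (y_1^2 + \sum_{i=2}^n y_i^2)/y_1 = (\sum_{i=1}^n y_i^2)/y_1 = y_0 y_1 / y_1 = y_0$, using the defining relation of $K$ in the last step. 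This verifies $\varphi(x) = y$.

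For the final claim, I would introduce the linear change of coordinates on $\mathbb{R}^{n+1}$ given by $z_0 = (y_0 + y_1)/2$, $z_1 = (y_0 - y_1)/2$, and $z_i = y_i$ for $2 \le i \le n$ — an invertible linear map. Under it, $y_0 \ge 0$ and $y_1 \ge 0$ translates to $z_0 \ge |z_1|$, and $y_0 y_1 = \sum_{i=1}^n y_i^2$ becomes $z_0^2 - z_1^2 = z_1^2 + \sum_{i=2}^n z_i^2$, i.e. $z_0^2 = 2z_1^2 + \sum_{i=2}^n z_i^2$ — hmm, that is not quite the stated normalized form, so I would instead rescale (e.g. replace $z_1$ by $\sqrt{2}\, z_1$, still linear and invertible) to arrive exactly at $z_0^2 = \sum_{i=1}^n z_i^2$; and one checks that on this quadric the condition $z_0 \ge |z_1|$ is automatically implied by $z_0 \ge 0$ together with the equation, so the image is precisely $\{z : z_0^2 = \sum_{i=1}^n z_i^2,\ z_0 \ge 0\}$, the boundary of the Lorentz cone. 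The main obstacle is bookkeeping rather than conceptual: getting the linear isomorphism's normalization exactly right and confirming that the sign constraint $y_1 \ge 0$ is not lost (it is redundant given $y_0 \ge 0$ and the quadratic relation, since $y_1 = (\sum y_i^2)/y_0 \ge 0$ when $y_0 > 0$, and the $y_0 = 0$ case forces everything to zero). Everything else is elementary algebra.
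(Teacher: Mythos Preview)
Your approach is essentially identical to the paper's: same easy inclusion via $f_0 f_1 = \sum f_i^2$, same case split on $y_1 = 0$ versus $y_1 > 0$ with the same explicit preimages, and the same plan of an explicit linear change of variables for the Lorentz-cone isomorphism.

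One arithmetic slip in your bookkeeping for the last part: under your substitution $z_1 = (y_0 - y_1)/2$ you have $y_1 = z_0 - z_1$, not $y_1 = z_1$, so $\sum_{i=1}^n y_i^2 = (z_0 - z_1)^2 + \sum_{i\ge 2} z_i^2$ and the equation does \emph{not} reduce to $z_0^2 = 2z_1^2 + \sum_{i\ge 2} z_i^2$. The paper instead takes $z_0 = y_0/2$, $z_1 = y_1 - y_0/2$, $z_i = y_i$ for $i \ge 2$; then $y_0 y_1 = 2z_0(z_0 + z_1)$ and $y_1^2 = (z_0 + z_1)^2$, and the relation collapses directly to $z_0^2 = \sum_{i=1}^n z_i^2$, with $y_0 \ge 0 \Leftrightarrow z_0 \ge 0$ and (as you correctly observe) $y_1 \ge 0$ redundant. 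This is purely cosmetic---your diagnosis that only bookkeeping remains is right.
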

	
	\begin{proof}
		``$\subseteq$'': This is clear since $f_0(x)f_1(x)=\sum_{i=1}^n f_i(x)^2$ and $f_0,f_1$ are sum of squares. 
		
		``$\supseteq$'': Given $(y_0,\ldots,y_n)\in\R^{n+1}$ satisfying $y_0y_1=\sum_{i=1}^n y_i^2,y_0\ge 0,y_1\ge 0$, we construct $x\in\R^n$ with $y_i=f_i(x)$. We divide this into two cases based on whether $y_1$ is zero or positive: 
		
		\begin{itemize}
			\item If $y_1=0$, then $\sum_{i=1}^n y_i^2=y_0y_1=0$ implies $y_i=0$ for all $1\le i\le m$. In this case we let $x_2=\sqrt{y_0}$ and $x_i=0$ otherwise. 
			
			\item If $y_1>0$, we let $x_1=\sqrt{y_1}$ and $x_i=\frac{y_i}{\sqrt{y_1}}$ for all $2\le i\le n$. 
		\end{itemize}
		
		In both cases one can directly verify $y_i=f_i(x)$ for all $0\le i\le n$. 
		
		To see that the image is linearly isomorphic to boundary of Lorentz cone, let $z_0=\frac{y_0}{2},z_1=y_1-\frac{y_0}{2},z_i=y_i$ for all $2\le i\le n$ and with this substitution one can verify that $\{(y_0,\ldots,y_n):y_0y_1=\sum_{i=1}^n y_i^2,y_0\ge 0,y_1\ge 0\}=\{(z_0,\ldots,z_n):z_0^2=\sum_{i=1}^n z_i^2,z_0\ge 0 
		\}$. 
	\end{proof}
	
	Proposition \ref{prop:bd_soc} shows that the image of this quadratic map is the boundary of a full-dimensional closed pointed convex cone. We now show that the image of such a set under any linear map with nontrivial kernel must be convex. 
	
	\begin{lemma}\label{lem:proj_bdcvx}
		Let $C\subseteq \R^n$ be a full-dimensional closed convex set which does not contain lines. Let $\pi:\R^n\to\R^m$ be any linear mapping with nontrivial kernel. Then $\pi(\partial C)=\pi(C)$, and therefore $\pi(\partial C)$ is convex. 
	\end{lemma}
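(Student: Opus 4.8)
The plan is to first clear away the trivial parts and isolate the one substantive inclusion. Since $\pi$ is linear and $C$ is convex, $\pi(C)$ is convex; hence once $\pi(\partial C)=\pi(C)$ is established, the final assertion is immediate. Also $\partial C\subseteq C$ because $C$ is closed, so $\pi(\partial C)\subseteq\pi(C)$ for free. All the work is in the reverse inclusion $\pi(C)\subseteq\pi(\partial C)$.

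To prove it, I would fix an arbitrary $x\in C$ and a nonzero $d\in\ker\pi$ (available since $\pi$ has nontrivial kernel), and examine the slice $J:=\{t\in\R:\, x+td\in C\}$. This is a closed convex subset of $\R$ containing $0$, hence a closed interval; since $C$ contains no lines, $J\neq\R$, so $J$ has at least one finite endpoint $t_0$ (possibly $J$ is a half-line, or even the single point $\{0\}$, but in every case a finite endpoint exists). Put $x_0:=x+t_0d\in C$. I claim $x_0\in\partial C$: if instead $x_0\in\operatorname{int}(C)$, then a whole ball around $x_0$ lies in $C$, so $x_0\pm\varepsilon d\in C$ for small $\varepsilon>0$, meaning $J$ contains an open interval around $t_0$ — contradicting that $t_0$ is an endpoint of $J$. (Here I use $\partial C=C\setminus\operatorname{int}(C)$, valid because $C$ is closed.) Finally $\pi(x_0)=\pi(x)+t_0\,\pi(d)=\pi(x)$ since $d\in\ker\pi$, so $\pi(x)\in\pi(\partial C)$. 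As $x\in C$ was arbitrary, $\pi(C)\subseteq\pi(\partial C)$, completing the proof.

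I do not expect a real obstacle here; the only points needing a little care are justifying that $J$ is a genuine interval with a finite endpoint — convexity of $C$ gives the interval property and the no-lines hypothesis rules out $J=\R$ — and the boundary argument above. Note that full-dimensionality of $C$ is not actually used (it merely guarantees $\operatorname{int}(C)\neq\emptyset$), and the no-lines hypothesis could be weakened to the statement that $x+\R d\not\subseteq C$ for the relevant $d$; I would nonetheless keep the hypotheses as stated, since this is how the lemma gets applied — to $C$ a pointed full-dimensional closed cone arising from Proposition~\ref{prop:bd_soc}.
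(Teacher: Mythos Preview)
Your proof is correct and follows essentially the same approach as the paper: pick a one-dimensional direction $d$ in $\ker\pi$, use the no-lines hypothesis to escape $C$ along $x+\R d$, and locate a boundary point on that line with the same image under $\pi$. The paper's version is terser (it only treats interior $x$, the boundary case being trivial), while you handle all $x\in C$ uniformly via the interval $J$ and give a cleaner justification that the endpoint lies on $\partial C$; but the underlying idea is identical.
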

	
	\begin{proof}
		Let $x$ be a point in the interior of $C$. It suffices to show $(x+\ker\pi)\cap \partial C\ne\emptyset$, as $\pi(x)=\pi(z)$ for all $z\in x+\ker\pi$. Let $\ell$ be any one-dimensional linear subspace of $\ker \pi$. Since $C$ does not contain lines, there exists $y\in x+\ell$ that is not contained in $C$. Thus there exists $z$ on the segment $[x,y]$ that is on boundary of $C$. 
	\end{proof}
	
	\begin{remark}
		This statement may not be true when $C$ contains a line. For example let $C=[-1,1]\times \R\subseteq \R^2$ and $\pi$ be projection onto the first coordinate. Then $\pi(\partial C)=\{-1,1\}$, while $\pi(C)=[-1,1]$. 
	\end{remark}
	
	We now show that the following special class of quadratic maps have hidden hyperplane convexity: one of the maps is given by the identity matrix, and the rest are all products of a fixed variable with linear forms, and the linear forms do not span $\mathbb{R}^n$. \blue{\cite{argue2023necessary} studies a related problem on symmetric matrices of linear forms. }
	
	\begin{theorem}\label{thm:cvx}
		Fix integers $n\ge 2,m\ge 1$ and consider the following $m+1$ quadratic forms on $\R^n$: $f_0(x_1,\ldots,x_n)=\sum_{i=1}^n x_i^2,f_j(x_1,\ldots,x_n)=x_1 \ell_j$ for all $1\le j\le m$, where each $\ell_j=(v^{(j)})^{\top}x=\sum_{i=1}^n (v^{(j)})_i x_i$ is a linear form in $x_1,\ldots,x_n$. Let $\L=\span\{v^{(1)},\ldots,v^{(m)}\}$. If $\L\ne\R^n$, then the set $\{(f_0(x),\ldots,f_m(x)):x\in\R^n\}\subseteq \R^{m+1}$ is convex. 
	\end{theorem}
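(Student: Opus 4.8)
The plan is to realize $\varphi$ as a linear image of the universal quadratic map from Proposition~\ref{prop:bd_soc} and then apply Lemma~\ref{lem:proj_bdcvx}. First I would introduce the quadratic map $\Phi\colon\R^n\to\R^{n+1}$, $\Phi(x)=\bigl(\sum_{i=1}^n x_i^2,\ x_1x_1,\ x_1x_2,\ \ldots,\ x_1x_n\bigr)$ --- this is precisely the map whose image is computed in Proposition~\ref{prop:bd_soc} --- together with the linear map $\pi\colon\R^{n+1}\to\R^{m+1}$ defined by $\pi(y_0,y_1,\ldots,y_n)=\bigl(y_0,\ \sum_i v^{(1)}_i y_i,\ \ldots,\ \sum_i v^{(m)}_i y_i\bigr)$. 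Since $x_1\ell_j=\sum_{i=1}^n v^{(j)}_i(x_1x_i)$ for each $j$, one checks directly that $\varphi=\pi\circ\Phi$, hence $\im\varphi=\pi(\im\Phi)$, and it suffices to show the right-hand side is convex.

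By Proposition~\ref{prop:bd_soc}, $\im\Phi$ is, after applying a fixed linear isomorphism of $\R^{n+1}$, equal to the boundary of the solid second-order cone $\{z:z_0^2\ge\sum_{i=1}^n z_i^2,\ z_0\ge0\}$. The latter is a full-dimensional, closed, pointed convex cone (it contains no lines), so there is a full-dimensional, closed, pointed convex cone $C\subseteq\R^{n+1}$ --- namely the preimage of the solid Lorentz cone under that isomorphism --- with $\partial C=\im\Phi$. Next I would observe that $\pi$ has nontrivial kernel: $\pi(0,y_1,\ldots,y_n)=0$ exactly when $(y_1,\ldots,y_n)\in\L^{\perp}$, and $\L^{\perp}\ne\{0\}$ precisely because $\L\ne\R^n$. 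Lemma~\ref{lem:proj_bdcvx}, applied to the line-free full-dimensional closed convex set $C$ and the linear map $\pi$ with $\ker\pi\ne\{0\}$, then gives $\pi(\partial C)=\pi(C)$. Therefore $\im\varphi=\pi(\im\Phi)=\pi(\partial C)=\pi(C)$, which is convex as the linear image of the convex set $C$.

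Most of this is routine bookkeeping, and I expect the only load-bearing point to be the verification that $\ker\pi\ne\{0\}$ --- this is the single place the hypothesis $\L\ne\R^n$ enters, and it is exactly the input Lemma~\ref{lem:proj_bdcvx} requires. The hypothesis is sharp here: if $\L=\R^n$ (for instance $m=n$ with $v^{(j)}=e_j$), then $\pi$ is injective on $\im\Phi$ and $\im\varphi$ is literally the boundary of a pointed cone, which is not convex. The one mild subtlety to record carefully is that Proposition~\ref{prop:bd_soc} indeed identifies $\im\Phi$ with the \emph{topological boundary} of a full-dimensional closed convex set containing no lines, which is immediate from the explicit description there together with pointedness of the Lorentz cone.
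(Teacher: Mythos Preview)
Your proposal is correct and follows essentially the same approach as the paper: define the linear map $\pi$ exactly as you do, factor $\varphi=\pi\circ\Phi$, invoke Proposition~\ref{prop:bd_soc} to identify $\im\Phi$ with the boundary of the Lorentz cone, and apply Lemma~\ref{lem:proj_bdcvx} using that $\ker\pi=\{0\}\times\L^{\perp}$ is nontrivial precisely because $\L\ne\R^n$. Your write-up is somewhat more explicit than the paper's (which dispatches the argument in three sentences), but the logical skeleton is identical.
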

	
	\begin{proof}
		Let $\pi:\R^{n+1}\to\R^{m+1}$ be the linear map $\pi(y_0,y_1,\ldots,y_n)=(y_0,z_1,\ldots,z_m)$ where $z_j=\sum_{i=1}^n (v^{(j)})_i y_i$ for all $1\le j\le m$. Then $\ker\pi=\{(0,w):w\in\L^{\perp}\}$ is nontrivial, and $\{(f_0(x),\ldots,f_m(x)):x\in\R^n\}=\pi\{(\sum_{i=1}^n x_i^2,x_1^2,x_1x_2,\ldots,x_1x_n):x\in\R^n\}$. Thus convexity follows from Proposition \ref{prop:bd_soc} and Lemma \ref{lem:proj_bdcvx}. 
	\end{proof}
	
	We have the following immediate Corollary when $m<n$, since in this case the linear forms do not span $\mathbb{R}^n$.
	\begin{corollary}\label{cor:cvx}
		Fix integers $n>m\ge \blue{1}$ and consider the following $m+1$ quadratic forms on $\R^n$: $f_0(x_1,\ldots,x_n)=\sum_{i=1}^n x_i^2,f_j(x_1,\ldots,x_n)=x_1 \ell_j$ for all $1\le j\le m$, where each $\ell_j$ is a linear form in $x_1,\ldots,x_n$. Then the set $\{(f_0(x),\ldots,f_m(x)):x\in\R^n\}\subseteq \R^{m+1}$ is convex. 
	\end{corollary}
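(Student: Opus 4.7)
The plan is to deduce Corollary~\ref{cor:cvx} immediately from Theorem~\ref{thm:cvx}. The theorem guarantees convexity of $\{(f_0(x),\ldots,f_m(x)) : x \in \R^n\}$ under the hypothesis that $\L := \span\{v^{(1)},\ldots,v^{(m)}\}$ is a proper subspace of $\R^n$, so the only thing I need to verify is this subspace hypothesis under the assumption $n > m$ of the corollary.

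To this end, I would write each linear form in the corollary as $\ell_j(x) = (v^{(j)})^\top x$ for some $v^{(j)} \in \R^n$, matching the setup of Theorem~\ref{thm:cvx}. The subspace $\L$ is then spanned by $m$ vectors, so $\dim \L \le m$. Since $n > m$ by hypothesis, $\dim \L < n$, hence $\L$ is a proper subspace of $\R^n$. Applying Theorem~\ref{thm:cvx} directly yields the desired convexity.

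There is essentially no obstacle at the level of the corollary itself. All of the real content lies in Theorem~\ref{thm:cvx}, whose proof combines Proposition~\ref{prop:bd_soc} (identifying the image of the ``universal'' quadratic map $x \mapsto (\sum x_i^2,\ x_1^2,\ x_1x_2,\ldots,x_1x_n)$ with the boundary of a Lorentz-type cone) with Lemma~\ref{lem:proj_bdcvx} (which shows that projecting the boundary of a full-dimensional pointed closed convex cone along any nontrivial direction equals the projection of the whole cone, and hence is convex). The corollary simply isolates the clean special case $m < n$, where the span condition $\L \ne \R^n$ is automatic from a one-line dimension count.
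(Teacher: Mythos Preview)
Your proposal is correct and matches the paper's own proof essentially verbatim: the paper also deduces the corollary from Theorem~\ref{thm:cvx} by the one-line dimension count that $m<n$ forces $\L=\span\{v^{(1)},\ldots,v^{(m)}\}\ne\R^n$.
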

	
	Corollary \ref{cor:cvx} can be extended to quadratic forms
	defined on an arbitrary finite dimensional real vector space, 
	after choosing suitable bases. More generally, we only need for the first quadratic form to be positive definite, and the remaining ones to have a common linear factor. 
	
	\begin{proposition}\label{prop:cvx_quadforms}
		Fix integers $n>m\ge \blue{1}$ and $V$ be any 
		real vector space of dimension $n$. 
		Let $f_0,\ldots, f_m:V\to \R$ be quadratic forms on $V$ such that $f_0$ is positive definite (on $V$), and there exists linear form $\ell:V\to\R$ such that for all $1\le i\le m$, $f_i(x)=\ell(x)\ell_i(x)$ for some linear form $\ell_i:V\to\R$. Then the set $\{(f_0(x),\ldots,f_m(x)):x\in V\}\subseteq \R^{m+1}$ is convex. 
	\end{proposition}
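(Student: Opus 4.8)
The statement is precisely Corollary~\ref{cor:cvx} transported to an abstract vector space $V$ of dimension $n$, with the identity matrix replaced by an arbitrary positive definite form $f_0$ and the common variable $x_1$ replaced by an arbitrary nonzero linear functional $\ell$. The natural approach is to reduce it to Corollary~\ref{cor:cvx} by choosing a good basis for $V$. First I would fix a linear isomorphism $V\cong\R^n$; since $f_0$ is positive definite, by simultaneously diagonalizing (or by Gram--Schmidt with respect to the inner product $f_0$) I can choose coordinates $x_1,\dots,x_n$ on $V$ in which $f_0(x)=\sum_{i=1}^n x_i^2$. I must do this so that the fixed linear form $\ell$ becomes a coordinate, i.e. $\ell(x)=x_1$: this is possible because $\ell\ne 0$ (if $\ell=0$ then all $f_i\equiv 0$ and the image is a ray inside the $y_0$-axis, trivially convex — a degenerate case I would dispose of separately), so I first pick any vector $u$ with $\ell(u)=1$, set $x_1$ to be the coordinate dual to $u/\sqrt{f_0(u)}$ after an $f_0$-orthogonal decomposition $V=\span\{u\}\oplus u^{\perp_{f_0}}$, then choose an $f_0$-orthonormal basis of $u^{\perp_{f_0}}$ for the remaining coordinates. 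In these coordinates $f_0(x)=\sum_i x_i^2$ and $\ell(x)=c\,x_1$ for some constant $c\neq 0$, which I can absorb into the $\ell_i$'s, so that $f_i(x)=x_1\,\ell_i'(x)$ with $\ell_i'$ linear.

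**Key steps, in order.** (1) Handle the degenerate case $\ell\equiv 0$ directly. (2) Produce the coordinate change above so that $f_0=\sum x_i^2$ and $f_i=x_1\ell_i'$. (3) Apply Corollary~\ref{cor:cvx} (valid since $n>m\ge 2$) to conclude that $\{(f_0(x),f_1(x),\dots,f_m(x)):x\in V\}$ — which is literally the image of the quadratic map of step (2) — is convex. Since the coordinate change is a linear isomorphism of $V$ with $\R^n$, the image set is unchanged, so no further transformation of the target $\R^{m+1}$ is needed; convexity transfers verbatim.

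**Main obstacle.** There is no deep obstacle; the only thing requiring care is step (2), verifying that one can simultaneously arrange $f_0$ to be the standard sum of squares \emph{and} $\ell$ to be (a multiple of) the first coordinate. The point is that $f_0$-orthogonality is a genuine inner product, so decomposing $V=\R u \oplus u^{\perp_{f_0}}$ and choosing an $f_0$-orthonormal basis on each piece makes $f_0$ standard while keeping $\ell$ supported only on the $x_1$-coordinate (since $\ell$ vanishes on $u^{\perp_{f_0}}$? — no, that need not hold, so more precisely: I only need $\ell$ to be \emph{some} nonzero linear form, and after making $f_0$ standard I can apply a further orthogonal change of the coordinates $x_2,\dots,x_n$ — which preserves $f_0=\sum x_i^2$ — to rotate the restriction of $\ell$ to the hyperplane, but in fact the cleanest route is: first pick $x_1$ dual to $u/\sqrt{f_0(u)}$ so that $\ell = c x_1$ automatically by construction, then complete to an $f_0$-orthonormal basis). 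Alternatively, and perhaps more transparently, one observes that $f_i(x)=\ell(x)\ell_i(x)$ means every $f_i$ lies in the ideal generated by $\ell$; picking any complement so that $\ell$ is the first coordinate and then diagonalizing $f_0$ in a way that respects this flag. Either way the verification is routine linear algebra, and once it is done the result is immediate from Corollary~\ref{cor:cvx}.
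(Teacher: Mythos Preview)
Your plan is correct and coincides with the paper's proof: dispose of $\ell\equiv 0$, choose coordinates making $f_0=\sum x_i^2$ and $\ell$ a multiple of $x_1$, then invoke Corollary~\ref{cor:cvx}. One clarification on the coordinate change, since your discussion there wanders: the construction ``pick $u$ with $\ell(u)=1$, take $v_1=u/\sqrt{f_0(u)}$, complete with an $f_0$-orthonormal basis of $u^{\perp_{f_0}}$'' does \emph{not} force $\ell=c\,x_1$, because $\ell$ need not vanish on $u^{\perp_{f_0}}$ (you noticed this, but then restated the same flawed route as the ``cleanest'' one). The clean fix---which is what the paper does and what your final ``alternative'' gestures at---is to start from $\ker\ell$: take an $f_0$-orthonormal basis $v_2,\dots,v_n$ of the hyperplane $\ker\ell$ and extend to an $f_0$-orthonormal basis $v_1,\dots,v_n$ of $V$; then automatically $\ell(v_i)=0$ for $i\ge 2$, so $\ell=c\,x_1$. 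Equivalently, first pass to any $f_0$-orthonormal coordinates (making $f_0$ standard) and then apply a full orthogonal rotation of $\R^n$ sending $\ell$ to a multiple of $e_1^{\top}$.
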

	
	\begin{proof}
		If $\ell$ is identically zero on $V$, then we have $\{(f_0(x),\ldots,f_m(x)):x\in V\}=\{(c,0,\ldots,0): c\ge 0 \}$ which is clearly convex. Thus from now on assume $\ell$ is a nonzero linear form. 
		
		Our goal is to choose basis for $V$ in which $f_0$ becomes identity matrix and $\ell(x)$ becomes $x_1$, so that we can apply Corollary \ref{cor:cvx}. Let $B:V\times V\to\R$ be the symmetric bilinear form associated with $f_0$, i.e., $B(x,y)=\frac{1}{4}(f_0(x+y)-f_0(x-y))$. Since $f_0$ is positive definite, $B$ defines an inner product on $V$, by $\langle x,y\rangle_B= B(x,y)$. 
		Consider $\ker\ell$, which is a dimension $n-1$ linear subspace of $V$. Let $\{v_2,\ldots,v_m\}$ be a basis of $\ker \ell$ that is orthonormal with respect to inner product $\langle \cdot,\cdot\rangle_B$, which can be found using Gram-Schmidt in this inner product. Append $v_1$ so that $\{v_1,\ldots,v_m\}$ is an orthonormal basis of $V$ with respect to inner product $\langle \cdot,\cdot\rangle_B$. 
		
		Then by orthonormality we have 
		$$B(v_i,v_j)= \left\{\begin{array}{cc} 1& i = j\\ 0 & i \neq j \end{array}\right.,$$
		and $\ell(v_i)\ne 0$ if and only if $i=1$, since $v_1\notin \ker \ell$ and all other $v_i$ are in the kernel. Let $g_0,\ldots,g_m$ be quadratic forms on $\R^n$ such that $g_i(x_1,\ldots,x_n)=f_i(x_1v_1+\ldots+x_nv_n)$ for all $0\le i\le m$. Then $\{(g_0(x),\ldots,g_m(x)):x\in \R^n\}=\{(f_0(x),\ldots,f_m(x)):x\in V\}$ and one can verify $g_0(x)=\sum_{i=1}^n x_i^2$, $g_i(x)=x_1\ell_i'(x)$ for all $1\le i\le m$, where $\ell_i'(x)=\ell(v)\ell_i(x_1v_1+\ldots+x_nv_n)$ is a linear form in $x$. Thus by Corollary \ref{cor:cvx}, $\{(g_0(x),\ldots,g_m(x)):x\in \R^n\}$ is convex, and same holds true for $\{(f_0(x),\ldots,f_m(x)):x\in V\}$. 
	\end{proof}
	
	Proposition \ref{prop:cvx_quadforms} leads to a quick proof of Theorem~\ref{thm:HHTnontriv} that such quadratic maps also have hidden hyperplane convexity.

	\begin{proof}[Proof of Theorem~\ref{thm:HHTnontriv}]
		Apply Proposition \ref{prop:cvx_quadforms} to all quadratic forms restricted to $H$, and observe that $f_0|_H$ is positive definite, and $f_i(x)=\ell(x)\ell_i(x)$ for all $x\in H$ and $1\le i\le m$, where $\ell,\ell_i$ are viewed as linear forms on $H$. 
	\end{proof}

	
	\section{Proof of Theorem~\ref{thm:more_quadratics}}\label{sec:more_quadratics}
	There are two key results used in proof of Theorem~\ref{thm:more_quadratics} for the three quadratics PDLC case in \cite{dey2021obtaining}, which we will also use. The first is from~\cite{yildiran2009convex}, which characterizes when a homogeneous quadratic function has exactly one negative eigenvalue. 
	
	\begin{theorem}\label{thm:SCC}
		Let $P$ be any $(n+1)\times (n+1)$ symmetric matrix and $\mathcal{P} =\{ x\in \mathbb{R}^{n+1}\, : \, x^{\top} P x < 0\} \neq \emptyset$. The following are equivalent:
		\begin{enumerate}
			\item There exists a linear hyperplane that does not intersect $\mathcal{P}$.
			\item $P$ has one negative eigenvalue.
			\item $\mathcal{P}$ is an open semi-convex cone (SCC), i.e., a union of two disjoint open convex cones which are symmetric reflections of each other with respect to the origin. 
		\end{enumerate}
	\end{theorem}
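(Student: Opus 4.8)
The plan is to prove the two equivalences $(1)\Leftrightarrow(2)$ and $(2)\Leftrightarrow(3)$, which together give all three. Before anything else I would apply the spectral theorem and assume, after an orthogonal change of coordinates, that $P=\operatorname{diag}(\lambda_1,\dots,\lambda_{n+1})$ — this is harmless, since an orthogonal map preserves eigenvalues, sends linear hyperplanes to linear hyperplanes, and preserves convexity and the cone structure. Since $\mathcal{P}\neq\emptyset$, at least one $\lambda_i$ is negative, so the only real issue is to decide between ``exactly one'' and ``two or more''.

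For $(1)\Leftrightarrow(2)$ I would argue purely by inertia and a dimension count. If a linear hyperplane $H$ (dimension $n$) misses $\mathcal{P}$, then $x^{\top}Px\ge0$ for all $x\in H$, i.e.\ $P|_H\succeq0$; if $P$ had two negative eigenvalues, the $2$-dimensional subspace $L$ spanned by two corresponding eigenvectors would be negative definite, and $\dim L+\dim H=n+2>n+1=\dim\mathbb{R}^{n+1}$ forces $L\cap H\neq\{0\}$, giving a nonzero vector of $H$ inside $\mathcal{P}$ — a contradiction; hence there is exactly one negative eigenvalue. Conversely, ordering $\lambda_1<0\le\lambda_2\le\cdots\le\lambda_{n+1}$, the hyperplane $\{x_1=0\}$ satisfies $x^{\top}Px=\sum_{i\ge2}\lambda_i x_i^2\ge0$, so it misses $\mathcal{P}$.

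For $(2)\Rightarrow(3)$ I would make the components explicit: in the diagonalizing coordinates $\mathcal{P}=\{x:\ |\lambda_1|x_1^2>\sum_{i\ge2}\lambda_i x_i^2\}$, so every point of $\mathcal{P}$ has $x_1\neq0$, and writing $h(x_2,\dots,x_{n+1})=\bigl(\sum_{i\ge2}(\lambda_i/|\lambda_1|)x_i^2\bigr)^{1/2}$ (a positively homogeneous convex seminorm), the piece $\mathcal{P}\cap\{x_1>0\}$ equals the strict epigraph $\{x_1>h(x_2,\dots,x_{n+1})\}$, which is an open convex cone, while $\mathcal{P}\cap\{x_1<0\}$ is its reflection through the origin; the two are disjoint, so reversing the change of coordinates presents $\mathcal{P}$ as an SCC. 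For $(3)\Rightarrow(2)$ I would argue by contradiction: writing $\mathcal{P}=C\sqcup(-C)$ with $C$ an open convex cone, if $P$ had two negative eigenvalues then the punctured plane $L\setminus\{0\}$ with $L=\operatorname{span}(e_1,e_2)$ lies in $\mathcal{P}$, and being connected it lies in a single component, say $C$; then $e_1,-e_1\in C$, contradicting $-e_1\in-C$ and $C\cap(-C)=\emptyset$.

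I do not expect a genuine obstacle: the argument is linear algebra (inertia, one dimension count, connectedness of a punctured plane) plus the elementary fact that the strict epigraph of a positively homogeneous convex function is an open convex cone. The one point that needs a little care is the singular case, where some $\lambda_i=0$ and the components of $\mathcal{P}$ are unbounded ``cylindrical'' convex cones rather than solid ones; here I would simply note that the epigraph description in $(2)\Rightarrow(3)$ and the two-component argument in $(3)\Rightarrow(2)$ still go through precisely because $h$ above is only required to be a seminorm, not a norm.
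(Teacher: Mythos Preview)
The paper does not supply its own proof of this theorem; it is stated as a quoted result from \cite{yildiran2009convex} and used as a black box. Your argument is a correct self-contained proof: the diagonalization reduction is harmless for all three conditions, the inertia/dimension count for $(1)\Leftrightarrow(2)$ is the standard one, the epigraph-of-a-seminorm description cleanly gives $(2)\Rightarrow(3)$ (and your remark that only a seminorm is needed correctly handles the case where some $\lambda_i=0$), and the connectedness of a punctured $2$-plane gives the contradiction for $(3)\Rightarrow(2)$. There is nothing to compare against in the paper itself, but your write-up would serve as a perfectly good proof to insert in place of the citation.
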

	
	This result implies the following characterization of elements in $\Omega$. 
	
	\begin{lemma}\label{lem:Omega}Suppose that $S\ne\emptyset$, and let 
		$$\Omega = \{\lambda \in \mathbb{R}^m_+\setminus\{0\} : \conv(S)\subseteq S_{\lambda} \textup{ and }Q_{\lambda} \textup{ has at most one negative eigenvalue}\}.$$ Let $\lambda\in\mathbb{R}^m_+\setminus\{0\}$ be such that $Q_{\lambda}$ has at most one negative eigenvalue.  Then $Q_{\lambda}$ has exactly one negative eigenvalue, and $S_{\lambda}$ is either a convex set or a union of two disjoint convex sets. Furthermore, $\lambda\notin \Omega$ if and only if $S_{\lambda}$ is a union of two disjoint convex sets, and $S$ has nonempty intersection with both components. 
	\end{lemma}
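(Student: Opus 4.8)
The plan is to process the statement in the order the claims appear. First I would pin down that $Q_\lambda$ has \emph{exactly} one negative eigenvalue. By hypothesis $Q_\lambda$ has at most one; so I only need to rule out $Q_\lambda \succeq 0$. But $S \subseteq S_\lambda = \{x : F_\lambda < 0\}$ and $S \ne \emptyset$, so $S_\lambda \ne \emptyset$, i.e., there is a point $(x,1)$ (homogenizing) with $\hf_\lambda(x,1) = F_\lambda(x) < 0$. Hence $Q_\lambda$ cannot be positive semidefinite, so it has exactly one negative eigenvalue. Now I can invoke Theorem~\ref{thm:SCC} with $P = Q_\lambda$ and $\mathcal{P} = (S_\lambda)^h$: since $(S_\lambda)^h \ne \emptyset$ and $Q_\lambda$ has exactly one negative eigenvalue, $(S_\lambda)^h$ is an open semi-convex cone — a union of two disjoint open convex cones $\mathcal{C}, -\mathcal{C}$. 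Intersecting with the affine hyperplane $\{x_{n+1} = 1\}$ and identifying that slice with $\R^n$, I get that $S_\lambda$ is the union of (the slices of) $\mathcal{C}$ and $-\mathcal{C}$ at level $x_{n+1}=1$; each slice is a cross-section of an open convex cone, hence open and convex, and the two slices are disjoint since $\mathcal{C}$ and $-\mathcal{C}$ are. One of these slices may be empty, in which case $S_\lambda$ is a single convex set; otherwise $S_\lambda$ is a union of two disjoint convex sets. This establishes the second sentence.

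For the characterization of $\lambda \notin \Omega$: membership in $\Omega$ fails exactly when $\conv(S) \not\subseteq S_\lambda$. I would argue both directions. If $S_\lambda$ is connected (one convex set), then $S \subseteq S_\lambda$ forces $\conv(S) \subseteq S_\lambda$ by convexity of $S_\lambda$, so $\lambda \in \Omega$; this already shows the "only if" direction contrapositively — if $\lambda \notin \Omega$ then $S_\lambda$ must be two disjoint convex pieces. In that two-piece case, write $S_\lambda = U_1 \sqcup U_2$ with $U_1, U_2$ open, convex, disjoint, and nonempty. Since $S \subseteq S_\lambda = U_1 \sqcup U_2$ and $S$ is the disjoint union $S = (S\cap U_1) \sqcup (S\cap U_2)$: if $S$ met only one component, say $S \subseteq U_1$, then $\conv(S) \subseteq U_1 \subseteq S_\lambda$ and again $\lambda \in \Omega$. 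So $\lambda \notin \Omega$ forces $S$ to meet both $U_1$ and $U_2$. Conversely, if $S$ meets both components, pick $p \in S \cap U_1$ and $q \in S \cap U_2$; the midpoint $\tfrac12(p+q) \in \conv(S)$, and I claim it is not in $S_\lambda$. Indeed, $U_1$ and $U_2$ are the two level-$1$ slices of the antipodal open convex cones $\mathcal{C}$ and $-\mathcal{C}$ in $\R^{n+1}$: lifting, $(p,1) \in \mathcal{C}$ and $(q,1) \in -\mathcal{C}$, so $(q,1) \in \mathcal{C}$ would give $\pm(q,1)\in\mathcal C$ forcing $0\in\mathcal C$, a contradiction; hence $-(q,1) = (-q,-1) \in \mathcal{C}$. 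Then $(p,1) + (-q,-1) = (p - q, 0) \in \mathcal{C}$ (open convex cone), so $(p-q,0)$ and its negatives, together with the points $(p,1),(-q,-1)$, lie in $\mathcal C$; in particular the segment from $(p,1)$ to $(-q,-1)$ lies in $\mathcal C$, and its midpoint $(\tfrac{p-q}{2}, 0)$ lies in $\mathcal C$, consistent — but more to the point, $\hf_\lambda$ restricted to the line $x_{n+1}=1$ through $p$ and $q$: the point $(\tfrac{p+q}{2},1)$ lifts the midpoint, and I want $F_\lambda(\tfrac{p+q}{2}) \ge 0$. The clean way: $\mathcal C \cup \{0\} \cup (-\mathcal C)$ misses a separating hyperplane $x_{n+1}=0$ region issue — better to use that a line joining a point of $\mathcal{C}$ (level $+1$) to a point of $-\mathcal{C}$ (level $-1$) must cross level $0$, and on the level-$0$ hyperplane $\hf_\lambda \ge 0$ by Theorem~\ref{thm:SCC}(1) (the hyperplane $x_{n+1}=0$, or rather the separating one, does not meet $\mathcal P$); combined with convexity of the quadratic $F_\lambda$ along that line, $F_\lambda(\tfrac{p+q}{2})$ cannot be negative, so $\tfrac{p+q}{2} \notin S_\lambda$, hence $\conv(S) \not\subseteq S_\lambda$ and $\lambda \notin \Omega$.

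I expect the last step — showing the midpoint of two points lying in different components escapes $S_\lambda$ — to be the main obstacle, precisely because it requires care about which hyperplane Theorem~\ref{thm:SCC} produces and how the level-$0$ behavior of $F_\lambda^h$ interacts with the affine slice at level $1$; the cleanest route is probably to use the hyperplane $H_0$ guaranteed by Theorem~\ref{thm:SCC}(1) that misses $\mathcal{P} = (S_\lambda)^h$, observe that the lifted segment from $(p,1)$ to $(q,1)$ cannot cross $H_0$ (else it would leave $\mathcal{P}$, but $F_\lambda^h < 0$ at both endpoints forces, by convexity along the line... no) — so in fact I should instead directly use the semi-convex cone structure: $(p,1)\in\mathcal C$, $(q,1)\in -\mathcal C$, the Euclidean segment between them passes through a point with $x_{n+1}=1$, namely the midpoint lift; since $\mathcal C$ is a convex cone not containing $0$ and not all of its antipode, and the segment from a point of $\mathcal C$ to a point of $-\mathcal C$ exits both cones, the midpoint at level $1$ lies outside $\mathcal C \cup (-\mathcal C) = \mathcal P$, so $F_\lambda^h(\tfrac{p+q}{2},1) = F_\lambda(\tfrac{p+q}{2}) \ge 0$. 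The key sub-fact to verify carefully is that the segment between a point of an open convex cone $\mathcal C$ and a point of $-\mathcal C$ (with $0\notin\mathcal C$) meets neither cone except possibly near the endpoints — this follows because $\mathcal C$ convex and $x\in\mathcal C$, $-y\in\mathcal C$ with a convex combination in $\mathcal C$ would force, by scaling, $0\in\mathcal C$. Once that is in hand the characterization follows.
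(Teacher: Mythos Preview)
Your treatment of the first two claims (exactly one negative eigenvalue, and the shape of $S_\lambda$ via Theorem~\ref{thm:SCC} and slicing at $x_{n+1}=1$) matches the paper's argument essentially verbatim. The ``only if'' direction of the characterization is also fine.

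The gap is in the ``if'' direction: your claim that the \emph{midpoint} $\tfrac12(p+q)$ lies outside $S_\lambda$ is false in general. Take $n=1$, $Q_\lambda=\mathrm{diag}(-1,1)$, so $S_\lambda=\{x\in\R:|x|>1\}$ with components $U_1=(1,\infty)$, $U_2=(-\infty,-1)$. With $p=10\in U_1$ and $q=-2\in U_2$, the midpoint $4$ lies in $U_1\subseteq S_\lambda$. Your cone argument (``a convex combination in $\mathcal C$ would force $0\in\mathcal C$'') does not go through: having $x,-y,\alpha x+(1-\alpha)y\in\mathcal C$ gives no contradiction, and indeed the segment can re-enter $\mathcal C$ arbitrarily close to the midpoint.

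The fix is much simpler than anything you attempted and is implicitly what the paper relies on: the segment $[p,q]\subseteq\conv(S)$ is connected, while $U_1$ and $U_2$ are disjoint open subsets of $\R^n$ each meeting the segment. By connectedness, $[p,q]\not\subseteq U_1\cup U_2=S_\lambda$, so \emph{some} point of the segment (not necessarily the midpoint) witnesses $\conv(S)\not\subseteq S_\lambda$. The paper's proof simply asserts this equivalence as evident (``the only way it fails to contain $\conv(S)$ is when\dots'') without writing out the connectedness step.
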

	
	\begin{proof}
		If $Q_{\lambda}$ is PSD then $S\subseteq S_{\lambda}=\emptyset$. When $Q_{\lambda}$ has exactly one negative eigenvalue, using Theorem \ref{thm:SCC} the set $(S_{\lambda})^h=\{(x,x_{n+1}):(x,x_{n+1})^{\top} Q_{\lambda}(x,x_{n+1})<0 \}$ is a union of two disjoint open convex cones. Hence $S_{\lambda}\blue{\times \{1\}}=(S_{\lambda})^h \cap\{(x,x_{n+1}):x_{n+1}=1\}$ is either convex or a union of two \blue{disjoint} convex sets. Since $S_{\lambda}$ always contains $S$, the only way it fails to contain $\conv(S)$ is when $S_{\lambda}$ \blue{is a union of two disjoint convex sets} and $S$ has nonempty intersection with both. 
	\end{proof}
	
	We describe in more detail the set defined by a single quadratic inequality, whose matrix has exactly one negative eigenvalue. 
	
	\begin{lemma}\label{lem:single_quadratic}
		Let $f(x)=x^{\top}Ax+2b^{\top}x+c$ such that $Q=\begin{bmatrix}
			A & b\\b^{\top} & c
		\end{bmatrix}$ has exactly one negative eigenvalue. Let $S=\{x:f(x)<0\}$. Then $S$ is convex if $A$ is PSD, and $S\subsetneq \R^n$ unless $A=0,b=0$, and $c<0$. If $A$ is not PSD, then $S$ is union of two convex sets and $\conv(S)=\R^n$. 
	\end{lemma}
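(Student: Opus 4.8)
The plan is to reduce everything to the homogeneous picture provided by Theorem \ref{thm:SCC}. Since $Q$ has exactly one negative eigenvalue and $\P:=\{(x,x_{n+1})\in\R^{n+1}: (x,x_{n+1})^\top Q (x,x_{n+1})<0\}$ is nonempty (it contains $(x_0,1)$ for any $x_0\in S$, provided $S\neq\emptyset$; if $S=\emptyset$ all claims are vacuous or immediate), Theorem \ref{thm:SCC} tells us $\P$ is a disjoint union $\P = \P^+\cup\P^-$ of two open convex cones with $\P^- = -\P^+$. Then $S = \P\cap\{x_{n+1}=1\}$, so $S = S^+\cup S^-$ where $S^\pm = \P^\pm\cap\{x_{n+1}=1\}$, each of which is convex (possibly empty). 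This already shows $S$ is either convex (if one of $S^+,S^-$ is empty, or they happen to merge — but they cannot merge since $\P^+\cap\P^-=\emptyset$) or a union of two disjoint convex sets.

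Next I would pin down which case occurs in terms of $A$. The key observation: $S$ has two nonempty components iff the hyperplane $\{x_{n+1}=0\}$ separates $\P^+$ from $\P^-$ meeting both "halves," which is governed by the recession directions. A direction $(d,0)$ with $d\neq 0$ lies in $\overline{\P}$ iff $d^\top A d \le 0$; and $(d,0)\in\P$ (strictly) iff $d^\top A d<0$. If $A\succeq 0$, then no nonzero $(d,0)$ lies in $\P$, so $\P$ lies strictly on one side of $\{x_{n+1}=0\}$ except possibly touching it, hence the affine slice $\{x_{n+1}=1\}$ meets only one of the two cones — giving $S$ convex. Conversely, if $A\not\succeq 0$, pick $d$ with $d^\top A d<0$; then $(d,0)\in\P$, so the line $t\mapsto (x_0+td, 1)$ for $x_0\in S^+$ eventually... — more carefully, $(x_0,1)+t(d,0)\in\P$ for all large $|t|$ since the quadratic form in $t$ has negative leading coefficient, and for $t\to+\infty$ it lies in the same component as a ray direction $(d,0)$ while for $t\to-\infty$ it lies in $-$(that component); since the slice is connected to both, both $S^+$ and $S^-$ are nonempty, so $S$ has two components. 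Then $\conv(S)\supseteq\conv(S^+\cup S^-)$; because $S^+$ and $S^-$ contain the opposite rays $\pm d$ directions and are symmetric through... actually one shows $\conv(S)=\R^n$ by noting $S$ contains, for any point $p$, points $p+td$ and $p-td$ for large $t$ (both in $S$), hence $p\in\conv(S)$ — wait, need $p+td\in S$, which holds once $f(p+td)<0$, true for $|t|$ large since leading coefficient $d^\top Ad<0$. So $\conv(S)=\R^n$.

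Finally, the claim $S\subsetneq\R^n$ when $A\succeq 0$ unless $A=0,b=0,c<0$: if $A\succeq 0$ and $S=\R^n$ then $f<0$ everywhere; letting $\|x\|\to\infty$ along a direction $d$ with $d^\top A d>0$ forces a contradiction, so $A=0$; then $f(x)=2b^\top x+c<0$ for all $x$ forces $b=0$ (else unbounded above) and then $c<0$. Conversely if $A=0,b=0,c<0$ then $f\equiv c<0$ and $S=\R^n$. The main obstacle I anticipate is the bookkeeping in the two-component case: carefully justifying that the affine slice $\{x_{n+1}=1\}$ genuinely meets \emph{both} cones $\P^+,\P^-$ when $A\not\succeq0$ (equivalently, that $S^+$ and $S^-$ are both nonempty), since a priori the slice could miss one cone entirely even though $\P$ has points on both sides of $\{x_{n+1}=0\}$; the resolution is the observation above that a single line through any point of $S$ in direction $d$ with $d^\top A d<0$ enters $S$ for both $t\to+\infty$ and $t\to-\infty$, and these two tails lie in opposite cones.
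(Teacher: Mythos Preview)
Your argument is correct, but it takes a different route from the paper's. For the case $A\not\succeq 0$ the paper does not unpack the SCC structure at all; instead it invokes Y{\i}ld{\i}ran's two-quadratic convex hull theorem as a black box, pairing $f$ with the trivially satisfied constraint $f_2(x)=-\|x\|_2^2-1$ (so $Q_2=-I_{n+1}$), and observes that $\lambda A+(1-\lambda)(-I_n)$ is never PSD on $[0,1]$, whence $\conv(S)=\conv(S\cap S_2)=\R^n$ by that theorem. Your approach is more elementary and self-contained: you only use Theorem~\ref{thm:SCC} and the one-line observation that for any $p\in\R^n$ and any $d$ with $d^\top A d<0$, the quadratic $t\mapsto f(p+td)$ has negative leading coefficient, so $p\pm td\in S$ for large $t$ and $p=\tfrac12(p+td)+\tfrac12(p-td)\in\conv(S)$. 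This avoids importing the full two-quadratic machinery; the paper's version, on the other hand, is a one-line appeal to a result it is already citing elsewhere.

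Two small remarks on presentation. First, your worry about showing that \emph{both} $S^+$ and $S^-$ are nonempty is not actually needed for the statement: once you have $\conv(S)=\R^n$ and $S\ne\R^n$ (which follows since $Q$ has exactly one negative eigenvalue and hence is not negative semidefinite unless of rank one, and in the rank-one case $S$ is the complement of a hyperplane), $S$ cannot be convex, so the SCC decomposition forces two nonempty pieces. Second, the aside ``if $S=\emptyset$ all claims are vacuous or immediate'' is a little loose in the $A\not\succeq 0$ case, since there the claim $\conv(S)=\R^n$ is certainly not vacuous; but your own line-argument already shows $S\ne\emptyset$ in that case, so this is only a matter of ordering.
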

	
	\begin{proof}
		If $A$ is PSD the result is straightforward. If $A$ is not PSD we apply Theorem 1 in \cite{yildiran2009convex}, the characterization of convex hull defined by 2 quadratics, where $f_1=f$ and $f_2=-\|x\|_2^2-1$ with $Q_2=-I_{n+1}$. Then $\lambda A_1+(1-\lambda)A_2=\lambda A-(1-\lambda)I_n$ is never PSD for any $0\le\lambda\le 1$, which means $\conv(S_1)=\conv(S_1\cap S_2)=\R^n$.   
	\end{proof}
	
	The next result is a homogeneous separation lemma which was proved in \cite{dey2021obtaining}, which holds for arbitrary quadratics. Recall that the homogenization $S^h$ of $S$ is defined as follows:
	$$
	S^h=\{(x,x_{n+1}): \hf_i(x,x_{n+1})=x^{\top}A_ix+2(b_i^{\top}x)x_{n+1}+c_ix_{n+1}^2<0,i\in [m] \}. 
	$$
	
	\begin{lemma}[Lemma 5.4 in \cite{dey2021obtaining}]\label{lem:homog_separation}
		Let $\alpha^{\top}x<\beta$ be a valid inequality for $\conv(S)$. If $\conv(S)\ne\R^n$, then $\{(x,x_{n+1}): \alpha^{\top}x=\beta x_{n+1} \}\cap S^h=\emptyset$. 
	\end{lemma}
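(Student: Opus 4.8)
\textbf{Proof proposal for Lemma~\ref{lem:homog_separation}.}

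The plan is to argue by contradiction: suppose $\alpha^\top x < \beta$ is valid for $\conv(S)$, yet there exists a point $(\bar x, \bar x_{n+1}) \in S^h$ lying on the hyperplane $\{\alpha^\top x = \beta x_{n+1}\}$. I will use the fact that $S^h$ is a cone (it is defined by homogeneous strict quadratic inequalities), so $(\bar x, \bar x_{n+1})$ and $(-\bar x, -\bar x_{n+1})$ both lie in $S^h$; hence I may assume $\bar x_{n+1} \ge 0$. I first dispose of the case $\bar x_{n+1} > 0$: after rescaling by $1/\bar x_{n+1}$ we get a genuine point $y = \bar x / \bar x_{n+1} \in S$ with $\alpha^\top y = \beta$, contradicting strict validity of $\alpha^\top x < \beta$ on $S \subseteq \conv(S)$. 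So the remaining case is $\bar x_{n+1} = 0$, i.e. there is a nonzero direction $d := \bar x \in \R^n$ with $f_i^h(d,0) = d^\top A_i d < 0$ for all $i \in [m]$ and $\alpha^\top d = 0$.

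The core step is then to show that such a recession-type direction $d$ forces $\conv(S) = \R^n$, contradicting the hypothesis $\conv(S) \ne \R^n$. The idea is that $d$ is a direction in which all the quadratic forms $f_i$ eventually decrease: for any basepoint $x_0$ and any $t$, $f_i(x_0 + t d) = t^2 (d^\top A_i d) + t(\text{linear in } t) + f_i(x_0)$, and since the leading coefficient $d^\top A_i d$ is strictly negative, $f_i(x_0 + t d) \to -\infty$ as $t \to \pm\infty$. More carefully, I would take any fixed point $p \in S$ (which exists since $S \ne \emptyset$); then for $|t|$ large enough, $f_i(p + t d) < 0$ for every $i$ simultaneously, so $p + t d \in S$ for all $|t| \ge T$ for some threshold $T$. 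This gives a full line's worth of points (two opposite rays) in $S$, and taking convex combinations of $p + Td$ and $p - T' d$ for large $T, T'$ we cover the whole line $p + \R d$ inside $\conv(S)$. Since $\alpha^\top d = 0$, the linear functional $\alpha^\top x$ is constant equal to $\alpha^\top p$ along this entire line, and $\alpha^\top p < \beta$ since $p \in S$; so far no contradiction from boundedness alone. The contradiction instead comes from concluding $\conv(S) = \R^n$: I would argue that the presence of the unbounded recession direction $d$ with $d^\top A_i d < 0$ for \emph{all} $i$, combined with $S$ being open and nonempty, actually makes $\conv(S)$ all of $\R^n$ — because one can slide the line $p + \R d$ around (perturbing $p$ within the open set $S$) to sweep out a full-dimensional unbounded convex set whose recession cone contains both $d$ and $-d$, and then, using that every $f_i$ has the same strictly-decreasing behavior along $d$, show no halfspace can contain $\conv(S)$.

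The main obstacle, and the step to handle with care, is exactly this last implication: deducing $\conv(S) = \R^n$ from the existence of a single direction $d \ne 0$ with $d^\top A_i d < 0$ for all $i$. The cleanest route is probably to show directly that the homogeneous set $S^h$ contains a nonzero vector of the form $(\bar x, 0)$, and then invoke that $\conv(S) \ne \R^n$ is equivalent (via earlier homogenization machinery, e.g. the discussion surrounding $S^h$ and Lemma~\ref{lem:single_quadratic}/Lemma~\ref{lem:Omega}) to the nonexistence of such a direction: informally, $\conv(S) \ne \R^n$ means $\conv(S)$ has a supporting hyperplane, whose homogenization must be disjoint from $S^h$, and a disjoint linear hyperplane in turn constrains the recession behavior. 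I would look to reuse the recession-cone / homogenization correspondence already implicit in the paper rather than reprove it; if the excerpt attributes this to \cite{dey2021obtaining}, the argument is a short packaging of: (nonzero $(\bar x,0) \in S^h$) $\Rightarrow$ ($\pm \bar x$ are recession directions of $\conv(S)$ that also lie in the lineality direction forced by $\alpha^\top \bar x = 0$) $\Rightarrow$ ($\conv(S)$ unbounded in a way incompatible with $\conv(S) \ne \R^n$ unless the separating functional is orthogonal to $\bar x$, but then $\alpha$ and the separating functional together over-determine things) $\Rightarrow$ contradiction. The bookkeeping of which functional is which, and ensuring strictness of inequalities is preserved throughout (since everything here is with open inequalities), is where the proof needs to be written precisely.
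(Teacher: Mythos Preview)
The paper does not actually prove this lemma; it is quoted from \cite{dey2021obtaining}. So there is no ``paper's proof'' to compare against, only the question of whether your argument is correct.

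Your overall structure is right: contradiction, split on the sign of $\bar x_{n+1}$, and the case $\bar x_{n+1}>0$ is handled exactly as you say. The gap is in the case $\bar x_{n+1}=0$. You correctly observe that for \emph{any} basepoint $x_0\in\R^n$,
\[
f_i(x_0+td)=t^2(d^\top A_i d)+(\text{linear in }t)+f_i(x_0)\to -\infty \quad\text{as }|t|\to\infty,
\]
since $d^\top A_i d<0$. But you then retreat to a single $p\in S$, get only the line $p+\R d\subseteq\conv(S)$, and spend the rest of the argument trying to leverage $\alpha^\top d=0$ and ``sliding $p$'' to reach $\conv(S)=\R^n$. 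None of that is needed, and as written it does not close the argument.

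The missing one-line step is: apply your own observation to an \emph{arbitrary} $x_0\in\R^n$. For $M$ large enough, both $x_0+Md$ and $x_0-Md$ lie in $S$, hence their midpoint $x_0$ lies in $\conv(S)$. Since $x_0$ was arbitrary, $\conv(S)=\R^n$, contradicting the hypothesis. This is precisely the argument used in the proof of Proposition~\ref{prop:Rn} in the paper. The condition $\alpha^\top d=0$ plays no role in this case; you can drop that thread entirely.
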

	
	We are now ready to prove our main theorem of this section.
	
	\begin{proof}[Proof of Theorem \ref{thm:more_quadratics}]
		By definition of $\Omega$ we automatically have $\conv(S)\subseteq \bigcap_{\lambda\in \Omega} S_{\lambda}$. For the other direction \blue{$\bigcap_{\lambda\in\Omega}S_{\lambda}\subseteq \conv(S)$, it suffices to show that for any fixed} $\tilde{x}\notin \conv(S)$, \blue{$\tilde{x}\notin \bigcap_{\lambda\in\Omega}S_{\lambda}$, or in other words, }there exists $\lambda\in \Omega$ where $\blue{\tilde{x}}\notin S_{\lambda}$. 
		
		By separation theorem for convex sets, there exist $\alpha\in\R^n,\beta\in\R$ so that $\alpha^{\top}x<\beta$ for all $x\in\conv(S)$ and $\alpha^{\top}\blue{\tilde{x}}=\beta$. Lemma \ref{lem:homog_separation} states that $H\cap S^h=\emptyset$ where $H=\{(x,x_{n+1}): \alpha^{\top}x=\beta x_{n+1} \}$ is a hyperplane in the homogenized space. 
		
		Let $\hf:\R^{n+1}\to\R^m$ be the associated quadratic map, i.e., $\hf=(\hf_1,\ldots,\hf_m)$ where $\hf_i(x,x_{n+1})=x^{\top}A_ix+2(b_i^{\top}x)x_{n+1}+c_ix_{n+1}^2$. By definition of hidden hyperplane convexity, $\im \hf|_H$ is convex, and $H\cap S^h=\emptyset$ means that $\im \hf|_H$ does not intersect the open negative orthant $\{y\in\R^m: y_i<0,i\in [m]\}$. Since both sets are convex, there exists a separating hyperplane, that is, there exists $\lambda\in\R^m\setminus \{0\},\mu\in\R$ such that
		\begin{equation*}\begin{aligned}
				\im \hf|_H & \subseteq \{y\in\R^m :  \lambda^{\top}y\ge \mu \}\\
				\{y\in\R^m:  y_i<0, i\in [m] \} & \subseteq \{y\in\R^m : \lambda^{\top}y \le \mu \}. 
		\end{aligned}\end{equation*}
		
		Then note that
		\begin{itemize}
			\item $\lambda\in\R^m_+$. Otherwise suppose there exists some $j$ where $\lambda_j<0$ for contradiction. Then for any $M>0$ let $v_M=-Me_j-\sum_{i = 1}^m e_i$, where $e_i$ is the $i$-th standard basis vector. Then $v_M\in\{y\in\R^m: y_i<0, i\in [m] \}$ for any $M>0$, and $\lambda^{\top}v_M=-M\lambda_j-\sum_{i =1}^m \lambda_i v_i$. Since $\lambda_j<0$ and $\sum_{i =1}^m \lambda_i v_i,\mu$ is constant, for sufficiently large $M$ we have $\lambda^{\top}v_M>\mu$, which is contradiction. 
			
			\item $\mu=0$. Since $0\in \im \hf|_H$ we have $\mu\le 0$. For the other inequality, for any $M>0$ let $w_M=-\frac{1}{M}\sum_{i = 1}^m e_i\in \{y\in\R^m: y_i<0, i\in [m] \}$. We have $\mu\ge \lambda^{\top} w_M =-\frac{1}{M}\sum_{i = 1}^m \lambda_i $. Let $M\to\infty$ we get $\mu\ge 0$. 
		\end{itemize}
		
		Thus $\im \hf|_H\subseteq \{y\in\R^m\, : \, \lambda^{\top}y\ge 0 \}$ where $\lambda\in\R^m_+\setminus\{0\}$. This means for any $(x,x_{n+1})\in H$, $$\sum_{i=1}^m \lambda_i\hf_i(x_{n+1})=\sum_{i =1}^m \lambda_i \begin{bmatrix}
			x^{\top} & x_{n+1}
		\end{bmatrix}\begin{bmatrix}
			A_i & b_i\\b_i^{\top} & c_i
		\end{bmatrix}\begin{bmatrix}
			\blue{x} \\ x_{n+1}
		\end{bmatrix}\ge 0.$$ Theorem \ref{thm:SCC} then implies that the matrix $Q_{\lambda}=\sum_{i =1}^m \lambda_i \begin{bmatrix}
			A_i & b_i\\b_i^{\top} & c_i
		\end{bmatrix}$ has exactly one negative eigenvalue, and $(S_{\lambda})^h=\{(x,x_{n+1}):(x,x_{n+1})^{\top} Q_{\lambda}(x,x_{n+1})<0 \}$ where $Q_{\lambda}$ consists of two disjoint convex cones separated by $H=\{(x,x_{n+1}) : \alpha^{\top}x=\beta x_{n+1}\}$. Thus $(S_{\lambda})^h\cap \{(x,x_{n+1}) :\alpha^{\top}x<\beta x_{n+1}\}$ is convex, which geometrically is simply half of $(S_{\lambda})^h$. Also note that $S\times \{1\}=\{(x,1) \, :\, x\in S \}$ is contained in both $(S_{\lambda})^h$ and $\{\hat{x}\, : \, \alpha^{\top}x<\beta x_{n+1}\}$. Thus $\conv(S)\times \{1\}\subseteq (S_{\lambda})^h\cap \{\hat{x}\, : \, \alpha^{\top}x<\beta x_{n+1}\}$ as the right side is convex. Since $S_{\lambda}\blue{\times \{1\}}=(S_{\lambda})^h\cap \{\hat{x}\, : \,  x_{n+1}=1\}$, we have $\conv(S)\subseteq S_{\lambda}$, which concludes the proof. 
	\end{proof}

	
	\section{Certificates using aggregations when $S=\emptyset$ or $\conv(S)=\R^n$}\label{sec:emptyrn}
	We first examine the case where $S=\emptyset$ and prove Proposition \ref{prop:empty}.
	\begin{proof}[Proof of Proposition \ref{prop:empty}]
		$(\Leftarrow)$ If $Q_{\lambda}\succeq 0$ then $S_{\lambda}=\emptyset$, and the result follows since $S\subseteq S_{\lambda}$. 
		
		$(\Rightarrow)$ Assume $S=\emptyset$. We first show $S^h=\emptyset$. Since $S=\emptyset$, for any $x\in\R^n$, $f_i(x)\ge 0$ for some $i$. Thus if $t\ne 0$, then $\hf_i(x,t)=t^2 f_i(x/t)$ and thus $\hf_i$ cannot be simultaneously negative for all $i$. This means $S^h\subseteq\{(x,x_{n+1})\in\R^{n+1}: x_{n+1}=0 \}$, but $S^h$ is open and full dimensional if nonempty. Therefore we must have $S^h=\emptyset$. 
		
		This means intersection of $\im\hf$ with negative orthant is empty. Thus there exists $\lambda\in\R^m\setminus \{0\},\mu\in\R$ such that
		\begin{equation*}\begin{aligned}
				\im \hf &\subseteq \{y\in\R^m :  \lambda^{\top}y\ge \mu \}\\
				\{y\in\R^m:  y_i<0, i\in [m] \}&\subseteq \{y\in\R^m : \lambda^{\top}y \le \mu \}. 
		\end{aligned}\end{equation*}
		
		Rest of the proof is exactly the same (other than here we have $\im \hf$ instead of $\im\hf|_H$) as the step in proof of Theorem \ref{thm:more_quadratics} where we show $\lambda\in\R^m_+$ and $\mu=0$, which means $\sum_{i = 1}^m \lambda_i \hf_i(x,x_{n+1})\ge 0$ for all $(x,x_{n+1})\in\R^{n+1}$, and $Q_{\lambda}\succeq 0$. Namely: 
		
		\begin{itemize}
			\item $\lambda\in\R^m_+$. Otherwise suppose there exists some $j$ where $\lambda_j<0$ for contradiction. Then for any $M>0$ let $v_M=-Me_j-\sum_{i = 1}^m e_i$, where $e_i$ is the $i^{th}$ standard basis vector. Then $v_M\in\{y\in\R^m: y_i<0, i\in [m] \}$ for any $M>0$, and $\lambda^{\top}v_M=-M\lambda_j-\sum_{i =1}^m \lambda_i v_i$. Since $\lambda_j<0$ and $\sum_{i =1}^m \lambda_i v_i,\mu$ is constant, for sufficiently large $M$ we have $\lambda^{\top}v_M>\mu$, which is contradiction. 
			
			\item $\mu=0$. Since $0\in \im \blue{\hf}$ we have $\mu\le 0$. For the other inequality, for any $M>0$ let $w_M=-\frac{1}{M}\sum_{i = 1}^m e_i\in \{y\in\R^m: y_i<0, i\in [m] \}$. We have $\mu\ge \lambda^{\top} w_M =-\frac{1}{M}\sum_{i = 1}^m \lambda_i $. Let $M\to\infty$ we get $\mu\ge 0$. 
		\end{itemize}
	\end{proof}
	
	We now examine the case $\textup{conv}(S) = \mathbb{R}^n$ and prove Proposition \ref{prop:Rn}.
	
	\begin{proof}[Proof of Proposition \ref{prop:Rn}]
		Let $g_i(x)=x^{\top}A_i x,i\in [m]$ be the quadratic parts of $f_i$. Then $g_i$ are homogeneous, and we have $\hat{g}_i=g_i$ and $\im \hat{g}=\im \hf|_{\mathcal{E}}$. Using Proposition \ref{prop:empty} for $g_i$ we have $\{x\in\R^n: g_i(x)<0,i\in [m] \}=\emptyset$ if and only if $\sum_{i =1}^m \lambda_i A_i\succeq 0$ for some nonzero $\lambda\ge 0$. 
		
		Thus if such $\lambda$ does not exist, then there exists some $v\in \R^n$ where $v^{\top}A_iv=g_i(v)<0$ for all $1\le i\le m$. We now show that for any fixed $x\in\R^n$, $x+Mv,x-Mv\in S$ for some $M>0$, which then implies $x\in\conv(S)$ and hence $\conv(S)=\R^n$. We have \begin{equation*}\begin{aligned}
				f_i(x+Mv)&=M^2(v^{\top}A_iv)+2M(x^{\top}A_iv+b_i^{\top}v)+f_i(x),\\
				f_i(x-Mv)&=M^2(v^{\top}A_iv)-2M(x^{\top}A_iv+b_i^{\top}v)+f_i(x).
		\end{aligned}\end{equation*}
		
		Since $v^{\top}A_iv<0$ for all $i$, the leading coefficient is negative, and the function values become negative for sufficiently large $M$. To be more precise, in order for $f_i(x+Mv)<0,f_i(x-Mv)<0$ for all $i$, it suffices to take
		$$
		M>\max_i\left\{ \frac{|x^{\top}A_iv+b_i^{\top}v|+\sqrt{(x^{\top}A_iv+b_i^{\top}v)^2-(v^{\top}A_i v)f_i(x)}}{-v^{\top}A_iv} \right\}
		$$
		
		where $i$ ranges over wherever the expression inside square root is non-negative. If such $i$ does not exist then $M$ can take any positive real value. 
	\end{proof}
	
	\section{Application to sets defined by spheres and halfspaces}\label{sec:cheese}
	
	Using results from Section \ref{sec:HHC} we now study sets defined by linear and sphere constraints: consider quadratic functions $f_i(x)=x^{\top}A_i x+2b_i^{\top}x+c_i, \,\, 1\le i\le m$ where each $A_i$ is either $I,-I$ or zero, and let $S=\{x\in\R^n: f_i(x)<0,1\le i\le m \}$. We call such a set $S$ defined by spheres and halfspaces, since each $f_i$ is either  an affine linear function, or it defines the interior or exterior of a sphere. Let $P,Z,N$ be the index sets of interior of the sphere constraints, exterior of the sphere constraints and affine linear constraints respectively: $P=\{i\in [m]:A_i=I \}$, $Z=\{i\in [m]:A_i=0 \}$, $N=\{i\in [m]:A_i=-I \}$. 
	
	
	

	In terms of matrices, after taking $\ell(x)=x_1$ Theorem~\ref{thm:HHTnontriv} can be restated as follows. Note we may use Lemma \ref{lem:HHC_preserved} to change basis for the matrices, corresponding to choosing different linear function $\ell(x)$. 
	
	\begin{corollary}\label{cor:hyp_cvx_matrices}
		Fix integers $n>m+1,m\ge 2$ and let $Q_0,\ldots,Q_m$ be $n\times n$ symmetric matrices such that $Q_0$ is positive definite, and $Q_i$ has nonzero entries only in first row and column for all $1\le i\le m$. Then $Q_0,\ldots,Q_m$ satisfy hidden hyperplane convexity. 
	\end{corollary}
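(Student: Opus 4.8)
The plan is to read Corollary~\ref{cor:hyp_cvx_matrices} off directly from Theorem~\ref{thm:HHTnontriv}; the only bridge needed is the elementary observation that a symmetric matrix supported on its first row and column produces a quadratic form divisible by the linear form $x_1$.

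\textbf{Step 1: unpack the hypothesis on the $Q_i$.} Suppose $Q_i$ has all entries zero except possibly in row $1$ and column $1$, and write $a_j := (Q_i)_{1j} = (Q_i)_{j1}$ for $j \in [n]$. Expanding the quadratic form,
$$x^\top Q_i x = a_1 x_1^2 + 2\sum_{j=2}^n a_j x_1 x_j = x_1\Bigl(a_1 x_1 + 2\sum_{j=2}^n a_j x_j\Bigr),$$
so $x^\top Q_i x = x_1\,\ell_i(x)$ where $\ell_i$ is the linear form in the parentheses. (Here one uses that every diagonal entry $(Q_i)_{jj}$ with $j\ge 2$ and every off-diagonal entry $(Q_i)_{jk}$ with $2\le j<k$ vanishes, which is why no other monomials appear.)

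\textbf{Step 2: check the hypotheses of Theorem~\ref{thm:HHTnontriv}.} Take the linear form $\ell(x):=x_1$. The integer constraints $n>m+1$ and $m\ge 2$ are the same in both statements. Set $f_0(x):=x^\top Q_0 x$, which is a positive definite quadratic form since $Q_0\succ 0$, and $f_i(x):=x^\top Q_i x = \ell(x)\ell_i(x)$ for $i\in[m]$ by Step 1. Then $\varphi=(f_0,\dots,f_m)$ satisfies the hypotheses of Theorem~\ref{thm:HHTnontriv}, hence $\varphi$ satisfies HHC, and by the definition of HHC for matrices this is exactly the assertion that $Q_0,\dots,Q_m$ satisfy HHC.

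The main (in fact only) obstacle is purely bookkeeping: confirming that ``nonzero entries only in the first row and column'' is precisely the matrix encoding of ``quadratic form that factors as $x_1$ times a linear form,'' which is the one-line computation in Step 1. If one wanted the analogous statement for a family of matrices whose quadratic forms share an arbitrary common linear factor $\ell$ rather than $x_1$, one would additionally invoke the invariance of HHC under the invertible change of coordinates sending $\ell$ to $x_1$, i.e. Lemma~\ref{lem:HHC_preserved}(1); for the corollary as stated this extra step is unnecessary.
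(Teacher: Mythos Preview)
Your proof is correct and follows exactly the paper's approach: the paper introduces this corollary simply as the matrix reformulation of Theorem~\ref{thm:HHTnontriv} obtained by taking $\ell(x)=x_1$, and your Step~1 supplies the one-line computation making that translation explicit.
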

	
	We can use Theorem \ref{thm:more_quadratics} to show that the convex hull of a set defined by constraints described above is always given by good aggregations.
	
	\begin{proposition}\label{prop:spheres}
		Let $f_i,i\in[m]$ be linear or sphere constraints such that one of the two following conditions holds: 
		
		\begin{itemize}
			\item $\dim \span(Q_1,\ldots,Q_m)\le n-1$. 
			\item $\dim \span(Q_1,\ldots,Q_m)= n$ and PDLC condition holds. 
		\end{itemize}
		
		Suppose $\emptyset\subsetneq\conv(S)\subsetneq\R^n$. Then $\conv(S)$ is defined by good aggregations, i.e., $\conv(S)=\bigcap_{\lambda\in\Omega}S_{\lambda}$, where $\Omega=\{\lambda\in\R_+^m\setminus\{0\}:\sum_{i=1}^m \lambda_i A_i\succeq 0 \}$ and $S_{\lambda}=\{x:\sum_{i=1} \lambda_i f_i(x)<0 \}$. 
	\end{proposition}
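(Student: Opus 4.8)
The plan is to deduce the result from Theorem~\ref{thm:more_quadratics}. We will verify that the homogeneous quadratic map $\hf=(\hf_1,\ldots,\hf_m)\colon\R^{n+1}\to\R^m$ satisfies hidden hyperplane convexity, apply Theorem~\ref{thm:more_quadratics} to obtain $\conv(S)=\bigcap_{\lambda\in\Omega}S_\lambda$ for the good-aggregation set $\Omega$ of Section~\ref{sec:pre}, and finally check that in this setting $\Omega=\{\lambda\in\R_+^m\setminus\{0\}:\sum_{i=1}^m\lambda_iA_i\succeq 0\}$.

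The main work is the HHC verification for the $(n+1)\times(n+1)$ matrices $Q_1,\ldots,Q_m$. The key structural observation is that, since each $A_i\in\{I_n,-I_n,0\}$, the map sending a matrix to its leading $n\times n$ block has at most one-dimensional image on $\span(Q_1,\ldots,Q_m)$; hence the subspace $\mathcal{L}'$ of matrices in $\span(Q_1,\ldots,Q_m)$ with vanishing leading block has codimension at most $1$ there, and every element of $\mathcal{L}'$ has the form $\begin{bmatrix}0 & b\\ b^\top & c\end{bmatrix}$, i.e.\ is a quadratic form divisible by the linear form $x_{n+1}$. I would then produce a positive definite matrix $Q_0$ and a basis $M_1,\ldots,M_k$ of a space of such ``$x_{n+1}$-divisible'' forms with $\span(Q_1,\ldots,Q_m)\subseteq\span(Q_0,M_1,\ldots,M_k)$ and $k\le n-1$: in the case $\dim\span(Q_1,\ldots,Q_m)\le n-1$, take $Q_0=I_{n+1}$, pick one $Q_{i_0}$ with nonzero leading block (if any), and write $Q_{i_0}=\pm I_{n+1}+(Q_{i_0}\mp I_{n+1})$ to absorb it, which yields $k\le n-1$; in the PDLC case take $Q_0=\sum_i\theta_iQ_i\succ 0$ and let $M_1,\ldots,M_{n-1}$ be a basis of $\mathcal{L}'$, so $k=n-1$. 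Since HHC is invariant under change of basis by Lemma~\ref{lem:HHC_preserved}(1), I may permute the coordinates of $\R^{n+1}$ so that $x_{n+1}$ becomes the first coordinate; then $Q_0$ remains positive definite and each $M_j$ has nonzero entries only in its first row and column, so Corollary~\ref{cor:hyp_cvx_matrices} shows that $Q_0,M_1,\ldots,M_k$ satisfy HHC (the hypothesis ``$n>m+1$'' there reads $n+1>k+1$, which holds since $k\le n-1$; the degenerate cases $k\le 1$ are handled by Corollary~\ref{cor:m23}(1) or trivially). By Lemma~\ref{lem:HHC_preserved}(2), $Q_1,\ldots,Q_m$ then also satisfy HHC, since they lie in $\span(Q_0,M_1,\ldots,M_k)$. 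I expect the dimension bookkeeping---confirming $k\le n-1$ in every case, using that $\dim\mathcal{L}'=\dim\span(Q_1,\ldots,Q_m)-1$ whenever $P\cup N\ne\emptyset$---to be the step requiring the most care.

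Granting HHC, Theorem~\ref{thm:more_quadratics} applies, since $\emptyset\subsetneq\conv(S)$ forces $S\ne\emptyset$ and $\conv(S)\ne\R^n$ is assumed, giving $\conv(S)=\bigcap_{\lambda\in\Omega}S_\lambda$. To identify $\Omega$, fix $\lambda\in\R_+^m\setminus\{0\}$ and note that the leading $n\times n$ block of $Q_\lambda$ equals $A_\lambda:=\big(\sum_{i\in P}\lambda_i-\sum_{i\in N}\lambda_i\big)I_n$, a scalar multiple of $I_n$; since $n\ge 2$, this block has at most one negative eigenvalue if and only if it is positive semidefinite. If $A_\lambda\succeq 0$, then $F_\lambda=\sum_i\lambda_if_i$ is a convex function, so $S_\lambda$ is convex and, containing $S$, contains $\conv(S)$; moreover $Q_\lambda$ is positive semidefinite on the hyperplane $\{x_{n+1}=0\}$ and hence has at most one negative eigenvalue (a symmetric matrix positive semidefinite on a hyperplane admits no two-dimensional negative-definite subspace), so $\lambda\in\Omega$. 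Conversely, if $\lambda\in\Omega$, then $Q_\lambda$ has at most one negative eigenvalue, and by Cauchy interlacing its principal submatrix $A_\lambda$ has at most one negative eigenvalue, forcing $A_\lambda\succeq 0$. Therefore $\Omega=\{\lambda\in\R_+^m\setminus\{0\}:\sum_{i=1}^m\lambda_iA_i\succeq 0\}$, which completes the proof.
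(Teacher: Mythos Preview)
Your proof is correct and follows essentially the same route as the paper: verify HHC for $Q_1,\ldots,Q_m$ by exhibiting a positive definite $Q_0$ together with at most $n-1$ matrices of the form $\begin{bmatrix}0&b\\b^\top&c\end{bmatrix}$ whose span contains $\span(Q_1,\ldots,Q_m)$, invoke Corollary~\ref{cor:hyp_cvx_matrices} (after the coordinate swap) and Lemma~\ref{lem:HHC_preserved}, and then apply Theorem~\ref{thm:more_quadratics}. The only cosmetic differences are that the paper reduces the first bullet to the second by appending the trivial constraint $f_0=-1-\|x\|^2$ (so that $-Q_0=I_{n+1}$ supplies PDLC), whereas you take $Q_0=I_{n+1}$ directly; and you carry out the identification of $\Omega$ with $\{\lambda\ge 0:\sum_i\lambda_iA_i\succeq 0\}$ inside the proof, while the paper records it in the paragraph following the proposition.
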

	
	Note that given the special structure of the constraints, the description of $\Omega$ can be greatly simplified compared to the general case. This is due to the fact that each $A_i$ is either $I,-I$ or 0, and therefore $Q_{\lambda}=\sum_{i=1}^m \lambda_i\begin{bmatrix}
		A_i & b_i\\b_i^{\top} & c_i
	\end{bmatrix}$ has at most one negative eigenvalue if and only if $\lambda_i A_i\succeq 0$. In this case $S_{\lambda}$ is automatically convex. In fact, the set $\Omega\cup\{0\}$ has a polyhedral description: $\Omega\cup\{0\}=\{\lambda\ge 0:\sum_{i\in P} \lambda_i \ge \sum_{i\in N} \lambda_i\}$. 
	
	\begin{proof}[Proof of Proposition \ref{prop:spheres}]
		When PDLC condition does not hold and there are $m\le n-1$ constraints, we can always add a trivial constraint $f_0=-1-\sum_{i=1}^n x_i^2$ with $Q_0=-I$, which does not change $S$. Thus from now on we assume $\dim \span(Q_1,\ldots,Q_m)\le n$ and PDLC holds. We show that there exist symmetric matrices $Q'_0,\ldots,Q'_{m-1}$ such that $\span(Q_1,\ldots,Q_m)= \span(Q'_0,\ldots,Q'_{m-1})$, $Q'_0\succ 0$ and $A'_i=0$ for all $1\le i\le m-1$. Then $Q_1,\ldots,Q_m$ satisfy HHC by Corollary \ref{cor:hyp_cvx_matrices} and Lemma \ref{lem:HHC_preserved}. 
		
		Such $Q'_0,\ldots,Q'_{m-1}$ can be chosen as follows: $Q'_0$ is the linear combination of $Q_1,\ldots,Q_m$ that is positive definite. Since $A_i=I,-I$ or 0, upon rescaling we may assume $A'_0=I$. Upon relabeling assume coefficient of $Q_m$ in the linear combination of $Q_1, \dots, Q_m$ that produces $Q'_0$ is nonzero. Now the $Q'_i$s (other than $Q'_0$) can be chosen to be
		
		$$Q'_i=\begin{cases}
			Q_i & \textup{ if }A_i=0\\
			Q_i-Q'_0 & \textup{ if }A_i=I\\
			Q_i+Q'_0 & \textup{ if }A_i=-I
		\end{cases},1\le i\le m-1$$
		
		Then clearly $A'_i=0$ for all $1\le i\le m$, and each $Q'_i$ is a linear combination of $Q_1,\ldots,Q_m$. Conversely, all $Q_1,\ldots,Q_{m-1}$ are linear combinations of $Q'_0,\ldots,Q'_{m-1}$, and $Q_m$ is a linear combination of $Q'_0,Q_1,\ldots,Q_{m-1}$ and hence $Q'_0,\ldots,Q'_{m-1}$. 
	\end{proof}

	
	

	Recall from Proposition \ref{prop:empty} that $S=\emptyset$ if and only if there exists nonzero $\lambda\ge 0$ such that $Q_{\lambda}\succeq 0$, which can be checked using an semidefinite program. In general, we do not have a necessary and sufficient condition for $\conv(S)=\R^n$. In the special case of sphere and linear constraints, it is easy to determine whether $\conv(S)=\R^n$ by checking the types of sphere constraints. Recall $P,Z,N$ denote index sets where $A_i=I,0,-I$ respectively. 
	
	\begin{lemma}\label{lem:Rn_sphere}
		$\conv(S)\ne\R^n$ if and only if either $P\ne\emptyset$ or there exists $i\in Z$ such that $b_i\ne 0$ or $b_i=0$ and $c_i\ge 0$. 
	\end{lemma}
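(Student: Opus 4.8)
The plan is to prove both implications by an elementary case analysis, reducing everything to three observations: a bounded set and a proper halfspace each have convex hull different from $\R^n$, whereas the complement of finitely many closed balls has convex hull equal to $\R^n$. No hidden hyperplane convexity is needed; the statement is purely about the coarse geometry of spheres and halfspaces.

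\textbf{The ``if'' direction.} I would split according to which part of the hypothesis holds. If $P\ne\emptyset$, pick $i\in P$: then $f_i(x)=\|x+b_i\|^2-(\|b_i\|^2-c_i)$, so $f_i(x)<0$ forces $x$ into an open Euclidean ball (possibly empty), whence $S$, and therefore $\conv(S)$, is bounded and $\ne\R^n$. If some $i\in Z$ has $b_i=0$ and $c_i\ge 0$, then $f_i\equiv c_i\ge 0$, so $S=\emptyset$ and $\conv(S)=\emptyset\ne\R^n$. If some $i\in Z$ has $b_i\ne 0$, then $f_i(x)=2b_i^\top x+c_i$ and $S$ is contained in the proper open halfspace $\{x:2b_i^\top x+c_i<0\}$; since this halfspace is convex it also contains $\conv(S)$, so $\conv(S)\ne\R^n$.

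\textbf{The ``only if'' direction.} I would argue the contrapositive: assume $P=\emptyset$ and that every $i\in Z$ satisfies $b_i=0$ and $c_i<0$. Each such $Z$-constraint then reads $f_i\equiv c_i<0$ and is vacuous, so $S=\{x:f_i(x)<0,\ i\in N\}$ with $A_i=-I_n$ for all $i\in N$. Rewriting $f_i(x)<0$ as $\|x-b_i\|^2>\|b_i\|^2+c_i$ shows that $\R^n\setminus S=\bigcup_{i\in N}\overline{B}(b_i,r_i)$ is a finite union of closed balls (the $i$-th one being empty when $\|b_i\|^2+c_i<0$), hence a bounded set. Consequently, for every $y\in\R^n$ and every nonzero $v\in\R^n$ the points $y\pm Mv$ lie outside this bounded set, hence in $S$, once $M$ is large; since $y$ is their midpoint, $y\in\conv(S)$, so $\conv(S)=\R^n$. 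Alternatively, after discarding the vacuous $Z$-constraints one has $\sum_i\lambda_iA_i=-(\sum_{i\in N}\lambda_i)I_n\succeq 0$ only for $\lambda=0$, and the degree-two parts $g_i(x)=-\|x\|^2$ all coincide so that $\im\hf|_{\mathcal E}$ is a ray and hence convex; Proposition~\ref{prop:Rn} then yields $\conv(S)=\R^n$ directly (with the trivial subcase $N=\emptyset$, where $S=\R^n$ outright, handled separately).

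\textbf{Main difficulty.} There is no substantial obstacle here; the argument is routine. The only points demanding care are parsing the ``there exists $i\in Z$'' clause correctly and covering the degenerate subcases, namely $S=\emptyset$, an empty ball occurring in $\R^n\setminus S$ when $\|b_i\|^2+c_i<0$, and $N=\emptyset$, so that the stated equivalence holds with no non-degeneracy assumption on the constraints.
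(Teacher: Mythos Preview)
Your proposal is correct and follows essentially the same approach as the paper: for the ``if'' direction you (like the paper) observe that a single constraint $S_i$ with $i\in P\cup Z$ is already convex and proper, and for the ``only if'' direction you argue the contrapositive by noting that when $P=\emptyset$ and all $Z$-constraints are vacuous the complement of $S$ is a bounded union of balls, forcing $\conv(S)=\R^n$. The paper's write-up is terser (it does not split the ``if'' case and simply says ``the complement of each $S_i$ is bounded''), but the content is the same; your alternative route through Proposition~\ref{prop:Rn} is a minor variation that the paper does not use.
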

	
	\begin{proof}
		$(\Rightarrow)$ We show the contrapositive. Suppose $P=\emptyset$ and for all $i\in Z$ we have $b_i=0$ and $c_i<0$. Then complement of each $S_i$ is bounded, and thus complement of $S$ is also bounded, which means $\conv(S)=\R^n$. 
		
		$(\Leftarrow)$ If such $i$ exists then $S_i$ is convex and not $\R^n$. Thus $\conv(S)\ne\R^n$ as $S\subseteq S_i$. 
	\end{proof}
	
	\subsection{Finiteness for number of good aggregations}
	
	Unlike in the general case where we need further conditions to ensure that the convex hull is given by finitely many good aggregations, for linear and sphere constraints if the convex hull is given by good aggregations, then finitely many will always suffice. The number of aggregations needed will depend on size of $P,Z,N$, which count the number of constraints where $A_i=I,0,-I$ respectively. 
	
	\begin{proposition}\label{prop:finiteness_sphere}
		Let $S$ be defined using linear and sphere constraints such that $\conv(S)$ is given by good aggregations. Then $\conv(S)$ can be described by at most $|P||N|+|P|+|Z|$ aggregations. Furthermore each good aggregation is either of the form $f_i$ for some $i\in P\cup Z$ or $f_i+f_j$ for some $i\in P,j\in N$.  
	\end{proposition}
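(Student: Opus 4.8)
The plan is to exploit the especially rigid structure of $\Omega$ in the sphere/halfspace setting, which was observed just before the statement: $Q_\lambda$ has at most one negative eigenvalue if and only if $\sum_{i\in P}\lambda_i\ge\sum_{i\in N}\lambda_i$, and in that case $S_\lambda$ is automatically convex (its quadratic part $\sum_i\lambda_iA_i=(\sum_{i\in P}\lambda_i-\sum_{i\in N}\lambda_i)I$ is PSD). So $\Omega\cup\{0\}=\{\lambda\ge 0:\sum_{i\in P}\lambda_i\ge\sum_{i\in N}\lambda_i\}$ is a polyhedral cone, and $\conv(S)=\bigcap_{\lambda\in\Omega}S_\lambda$ by Proposition~\ref{prop:spheres}. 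The key point is that $S_\lambda=\{x:F_\lambda(x)<0\}$ where $F_\lambda=\sum_i\lambda_if_i$, and when $\mu=\sum_{i\in P}\lambda_i-\sum_{i\in N}\lambda_i>0$, the set $S_\lambda$ is a (possibly empty) open ball $B_\lambda$, whose defining inequality is $\mu\|x\|^2+2(\sum_i\lambda_ib_i)^\top x+\sum_i\lambda_ic_i<0$; when $\mu=0$, $S_\lambda$ is an open halfspace (or $\emptyset$ or $\R^n$). The claim is that in the intersection $\bigcap_{\lambda\in\Omega}S_\lambda$, only finitely many $\lambda$'s matter, and they can be taken among the listed forms.

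First I would reduce to extreme rays of the relevant cone. The cone $\Omega\cup\{0\}$ has extreme rays of exactly two types: the rays $\R_+e_i$ for $i\in Z\cup P$ (these satisfy $\sum_P\ge\sum_N$ trivially, the $Z$-case with $0\ge 0$ and the $P$-case with $\lambda_i\ge 0$), and the rays $\R_+(e_i+e_j)$ for $i\in P$, $j\in N$ (here $\lambda_i=\lambda_j$ so $\sum_P=\sum_N$ on the boundary facet $\{\sum_P=\sum_N\}$). Crucially, each $N$-index $j$ alone is \emph{not} in $\Omega$, nor is any ray supported only on $N$ and $Z$ unless it also meets $P$. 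So every $\lambda\in\Omega$ is a nonnegative combination $\lambda=\sum_{i\in Z\cup P}\alpha_i e_i+\sum_{i\in P,j\in N}\beta_{ij}(e_i+e_j)$. The count $|P||N|+|P|+|Z|$ is precisely the number of these extreme rays. I would then argue that $S_\lambda\supseteq\bigcap_{\text{extreme rays }\rho}S_\rho$ for every $\lambda\in\Omega$: write $\lambda$ as such a combination, and show $F_\lambda<0$ is implied by $\{F_\rho<0:\rho\text{ extreme}\}$, i.e. that if $x$ lies in all the extreme-ray sets then $F_\lambda(x)=\sum(\text{coeff})F_\rho(x)<0$ — but this is immediate from linearity of $\lambda\mapsto F_\lambda$ and positivity of the coefficients, \emph{provided} each $F_\rho(x)<0$. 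This gives $\bigcap_{\lambda\in\Omega}S_\lambda=\bigcap_{\rho\text{ extreme}}S_\rho$, and the latter is an intersection of at most $|P||N|+|P|+|Z|$ sets of the stated form ($f_i$ for $i\in P\cup Z$, $f_i+f_j$ for $i\in P$, $j\in N$); the $Z$-type constraints $f_i$ are halfspaces, the $P$-type $f_i$ are balls, and the $P\times N$-type $f_i+f_j$ have zero quadratic part (since $I+(-I)=0$), hence are halfspaces.

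Two technical points need care, and the second is the main obstacle. (1) One must discard the vacuous extreme rays: if some $S_\rho=\R^n$ it contributes nothing, and if some $S_\rho=\emptyset$ then $S=\emptyset$, contradicting $\emptyset\subsetneq\conv(S)$ (a possibility already excluded by hypothesis, since $\conv(S)$ is given by good aggregations and is nonempty). (2) The subtle step is the direction $\bigcap_{\lambda\in\Omega}S_\lambda\subseteq\bigcap_{\rho}S_\rho$ together with the reverse: the reverse inclusion is the content of the previous paragraph and needs the decomposition of a general $\lambda$ into extreme rays to have \emph{nonnegative} coefficients that are \emph{bounded away from contributing positively}, which is fine since extreme rays of a polyhedral cone give such decompositions (Carathéodory for cones); the forward inclusion is trivial since each $\rho\in\Omega$. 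The genuinely delicate part is handling the boundary of $\Omega$: an aggregation $\lambda$ on the facet $\sum_P=\sum_N$ has $S_\lambda$ a halfspace, and I must confirm that such $\lambda$ decompose into the $e_i$ $(i\in P)$ and $e_i+e_j$ $(i\in P,j\in N)$ rays only — i.e. that no $e_j$ with $j\in N$ appears alone — which follows because dropping the pairing would violate $\sum_P\ge\sum_N$; I would spell this out by a small linear-algebra/LP argument on the cone $\{\lambda\ge 0:\sum_P\lambda_i\ge\sum_N\lambda_i\}$, identifying its extreme rays explicitly. Once the extreme rays are pinned down, the bound and the explicit forms drop out immediately.
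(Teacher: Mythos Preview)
Your proof is correct and follows essentially the same route as the paper: both arguments show that every $\lambda\in\Omega$ decomposes as a nonnegative combination of the special aggregations $e_i$ ($i\in P\cup Z$) and $e_i+e_j$ ($i\in P,\ j\in N$), whence $S_\lambda\supseteq\bigcap_{\rho}S_\rho$ and the intersection over $\Omega$ collapses to the finite intersection over these $|P||N|+|P|+|Z|$ rays. The only difference is presentational: the paper writes down explicit coefficients for this decomposition (splitting into the cases $\sum_{j\in N}\lambda_j=0$ and $\sum_{j\in N}\lambda_j>0$), whereas you identify the special aggregations as exactly the extreme rays of the pointed polyhedral cone $\Omega\cup\{0\}=\{\lambda\ge 0:\sum_{i\in P}\lambda_i\ge\sum_{i\in N}\lambda_i\}$ and invoke Minkowski--Weyl.
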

	
	\begin{proof}
		First observe that $f_i,i\in P\cup Z$ and $f_i+f_j,i\in P,j\in N$ are good aggregations, as their leading $n\times n$ principal submatrices are PSD. Let $f_{\lambda}=\sum_{i=1}^m \lambda_i f_i$ be any good aggregation. We claim that there exist nonnegative coefficients $\{\gamma_{ij},i\in P,j\in N\},\{\alpha_i,i\in P\},\{\beta_k, k\in Z\}$ such that \begin{equation}\label{eqn:pzn}
			f_{\lambda}=\sum_{i\in P,j\in N} \gamma_{ij} (f_i+f_j)+\sum_{i\in P} \alpha_i f_i+\sum_{k\in Z}\beta_k f_k.
		\end{equation} The above equation would show that $f_{\lambda}$ is an aggregation of $f_i,i\in P\cup Z$ and $f_i+f_j,i\in P,j\in N$. This implies that the constraint $f_{\lambda} < 0$ is dominated by the intersection of the constraints $f_i + f_j < 0,$ $(i,j) \in P \times N$, $f_i, i \in P $, and $f_k < 0$, $k \in Z$, giving us the upper bound of $|P||N|+|P|+|Z|$.
		
		Now we prove the claim about \eqref{eqn:pzn}. Since $f_{\lambda}$ is a good aggregation, we must have $\lambda\ge 0$ and $\sum_{i\in P}\lambda_i\ge \sum_{j\in N}\lambda_j$. If $\sum_{j\in N}\lambda_j=0$ then $\lambda_j=0$ for all $j\in N$, and we may choose $\gamma_{ij}=0$ for all $i\in P,j\in N$,$\alpha_i=\lambda_i$ for all $i\in P$ and $\beta_k=\lambda_k$ for all $k\in Z$. 
		
		From now on assume $\sum_{j\in N}\lambda_j>0$. For all $i\in P$ we let $$\mu_i=\lambda_i\frac{\sum_{j\in N}\lambda_j}{\sum_{i\in P}\lambda_i},$$ which satisfies $0 \le \mu_i\le \lambda_i$ for all $i\in P$ and $\sum_{i\in P}\mu_i=\sum_{j\in N}\lambda_j$. Then (\ref{eqn:pzn}) holds true if we let $$\gamma_{ij}=\frac{\mu_i\lambda_j}{\sum_{j\in N}\lambda_j},i\in P,j\in N, \quad \alpha_i=\lambda_i-\mu_i,i\in P, \quad \beta_k=\lambda_k,k\in Z. $$\end{proof}
	
	\section{Diagonal Inequalities}\label{sec:diag}
	For $x,y\in\R^n$, we let $x\circ y\in\R^n$ denote the element-wise product $(x_1y_1,...,x_ny_n)$. Given a vector $u \in\R^k$, we let $\textup{Diag}(u)$ be the $k \times k$ diagonal matrix with $(i,i)$ diagonal entry equal to $u_i$. 
	
	Let $Q_1,\ldots, Q_m$ be diagonal $(n+1)\times (n+1)$ matrices, and consider the set $$S=\{x\in\R^n: \begin{bmatrix}
		x\\1
	\end{bmatrix}^{\top} Q_i\begin{bmatrix}
		x\\1
	\end{bmatrix}< 0, 1 \leq i\le m \}.$$ 
	
	Our goal is to describe $\conv(S)$, and it is natural to consider the following ``open-polyhedron": let $A\in\R^{m \times n},b\in\R^m$ be defined such that $Q_i=\textup{Diag}(a_{i1},...,a_{in},-b_i)$, then clearly $S=\{x\in\R^n: x\circ x\in \P \}$ where $\P=\{x\in\R^n: Ax < b \}$ is a open-polyhedron, and in particular it is open in the topological sense, since it is given by intersection of finitely many open halfspaces. The usual notion of polyhedron given by closed linear inequalities is sometimes referred to as closed polyhedron. 
	
	Now we define a new set $\P'=\{y \in\R^n: \exists x,Ax<b,y  < x \}$. We first show that $\P'$ describe the element-wise squares of the convex hull. 
	
	\begin{proposition}\label{prop:conv}
		$\conv(S)=\{x\in\R^n: x\circ x\in\P'\}$ and $\P'\supseteq \P$. 
	\end{proposition}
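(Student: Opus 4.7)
The plan is to prove the two assertions separately: first the inclusion $\P'\supseteq\P$, then the equality $\conv(S)=\{x:x\circ x\in\P'\}$.

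For $\P'\supseteq\P$, the approach is a simple openness argument. Given $y\in\P$, since the constraints $Ay<b$ are strict, I would perturb by $x=y+\epsilon\mathbf{1}$ for sufficiently small $\epsilon>0$; this still satisfies $Ax<b$ (use the fact that the slack $b-Ay$ is strictly positive componentwise) while $y<x$ componentwise, certifying $y\in\P'$ with witness $x$.

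For the inclusion $\conv(S)\subseteq T$, where $T:=\{x:x\circ x\in\P'\}$, I would show that $T$ is convex and contains $S$. Containment of $S$ in $T$ follows immediately from the first assertion. For convexity, given $x,y\in T$ with witnesses $u,v\in\P$ (so $x\circ x<u$ and $y\circ y<v$), the key step is a coordinatewise application of convexity of $t\mapsto t^2$, which yields
\[
(\lambda x+(1-\lambda)y)\circ(\lambda x+(1-\lambda)y)\le \lambda(x\circ x)+(1-\lambda)(y\circ y)<\lambda u+(1-\lambda)v;
\]
the right-hand side lies in $\P$ by convexity of $\P$, certifying $\lambda x+(1-\lambda)y\in T$ with witness $\lambda u+(1-\lambda)v$.

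For the reverse inclusion $T\subseteq\conv(S)$: given $x\in T$ with witness $u\in\P$, the strict inequality $x\circ x<u$ forces $u_i>0$, so $w_i:=\sqrt{u_i}$ is well-defined. The $2^n$ sign-choice points $c^\epsilon=(\epsilon_1 w_1,\ldots,\epsilon_n w_n)$ for $\epsilon\in\{-1,+1\}^n$ all satisfy $c^\epsilon\circ c^\epsilon=u\in\P$, so each lies in $S$. Since $|x_i|<w_i$, the point $x$ sits in the open box $\prod_i(-w_i,w_i)$, which is contained in the convex hull of the $c^\epsilon$; explicitly, the product weights $\lambda_\epsilon=\prod_i (1+\epsilon_i x_i/w_i)/2>0$ sum to $1$ (as a product of pairs each summing to $1$) and satisfy $\sum_\epsilon \lambda_\epsilon c^\epsilon=x$ (each coordinate factoring through the identity $\sum_{\epsilon_i\in\{-1,+1\}}\epsilon_i(1+\epsilon_i x_i/w_i)/2=x_i/w_i$), exhibiting $x\in\conv(S)$. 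The main conceptual step I anticipate is this last direction: the insight is that the slack in the strict witness $u$ lets one surround $x$ by $2^n$ sign-flip copies of $(\sqrt{u_1},\ldots,\sqrt{u_n})$ all lying in $S$, after which the convex-combination identity is routine bookkeeping.
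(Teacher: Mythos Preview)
Your proof is correct and follows essentially the same approach as the paper: the same $\epsilon\mathbf{1}$-perturbation for $\P\subseteq\P'$, the same coordinatewise convexity-of-$t^2$ argument for convexity of $T$, and the same sign-flip idea for $T\subseteq\conv(S)$. The paper is terser on the last step (it simply asserts that $|y_i|<|x_i|$ for all $i$ with $x\in S$ implies $y\in\conv(S)$, using that all sign-flips of $x$ lie in $S$), whereas you supply the explicit product weights---a welcome elaboration, but not a different route.
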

	\begin{proof}
		\blue{Let $\bar{1}\in\R^n$ be the all-one vector.} First observe that $\P\subseteq \P'$: If $\hat{x} \in \P$, then there exists $\epsilon >0$ such that $\hat{x} + \epsilon \cdot \bar{1} \in \P$, since $\P$ is open.  Since $\hat{x} < \hat{x} + \epsilon \cdot \bar{1}$, this implies $\hat{x} \in \P'$. 
		
		Since $\P\subseteq \P'$ we have $S\subseteq \{y\in\R^n: y\circ y\in\P'\}$. Also observe that if $x\in S$, then $u\in S$ where $|u_i|=|x_i|$ for all $i\in [n]$. Thus, given $x \in S$, if $|y_i| < |x_i|$ for all $i \in [n]$ (or equivalently $y \circ y < x\circ x$), then $y \in \textup{conv}(S)$. This shows that $\{y\in\R^n: y\circ y\in\P'\}\subseteq \conv(S)$, i.e. ,
		$$S \subseteq \{y\in\R^n: y\circ y\in\P'\}\subseteq \conv(S).$$
		
		It remains to show that $\{y\in\R^n: y\circ y\in\P'\}$ is convex. Let $u, v$ be such that $u\circ u, v\circ v\in \P'$. This means there exists $u',v'$ such that $u \circ u < u'\circ u', v\circ v < v'\circ v'$ and $u'\circ u', v'\circ v'\in \ P$. Note that $u \circ u < u' \circ u', v\circ v < v'\circ v'$ is equivalent to $|u_i| < |u'_i|, |v_i|< |v'_i|$ for all $i \in [n]$. 
		
		Fix any $0\le \lambda\le 1$, and let $w=\lambda u+(1-\lambda) v$. We now show $w\circ w\in \P'$ which then completes the proof. Since $f(t)=t^2$ is a convex function, we have
		\begin{equation*}
			\begin{aligned}
				(w\circ w)_i & =(\lambda u_i+(1-\lambda)v_i)^2\\
				& \le \lambda u_i^2+(1-\lambda)v_i^2\\
				& < \lambda (u'_i)^2+(1-\lambda)(v'_i)^2,
			\end{aligned}
		\end{equation*}
		
		which shows $w \circ w < \lambda( u'\circ u')+(1-\lambda)(v'\circ v')$. Since $u'\circ u', v'\circ v' \in \P$ and $\P$ is convex, we have $\lambda( u'\circ u')+(1-\lambda)(v'\circ v')\in\P$ which shows $w\circ w\in \P'$.
	\end{proof}
	
	The following observation about $\P$ and $\P'$ \blue{is} useful in further analysis.
	
	\begin{observation}\label{obs:diagonal_closure}
		$\P$ and $\P'$ are always open. If $\P\ne\emptyset$ then its closure is given by
		$\overline{\P}:= \{ x \in \mathbb{R}^n : Ax \le b\}$, in which case $\overline{\P'}=\{y \in\R^n: \exists x,Ax\le b,y \le x \}$ is a closed polyhedron given by $\overline{\P'}=\{x\in\R^n:Gx\le h\}$ for some $G\in\R^{k\times n},h\in\R^k$ where $k$ is some positive integer. Then $\P'=\text{int}(\overline{\P'})=\{x\in\R^n:Gx<h\}$ is an open-polyhedron. 
	\end{observation}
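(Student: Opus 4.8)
The plan is the following. First, the openness of $\P$ and $\P'$ is immediate: $\P=\{x:Ax<b\}$ is a finite intersection of open halfspaces, and if $y_0\in\P'$ with a witness $x_0$ satisfying $Ax_0<b$ and $y_0<x_0$, then $y<x_0$ is an open condition on $y$, so a whole neighborhood of $y_0$ is contained in $\P'$ (using the same witness $x_0$); in particular if $\P=\emptyset$ then also $\P'=\emptyset$, and there is nothing more to prove. Next, when $\P\ne\emptyset$, I would establish $\overline{\P}=\{x:Ax\le b\}$ in the usual way: the inclusion $\subseteq$ holds because $\{Ax\le b\}$ is closed and contains $\P$, while for the reverse inclusion, given $\bar x$ with $A\bar x\le b$ and any fixed $x^*\in\P$, the points $(1-t)x^*+t\bar x$ for $t\in[0,1)$ satisfy $A\big((1-t)x^*+t\bar x\big)<b$ (componentwise, since $(1-t)Ax^*<(1-t)b$ strictly and $tA\bar x\le tb$) and converge to $\bar x$, so $\bar x\in\overline{\P}$.

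The heart of the argument is to introduce $R:=\{y\in\R^n:\exists x,\ Ax\le b,\ y\le x\}$ and to prove $\P'=\textup{int}(R)$ and $\overline{\P'}=R$. I would first note that $R$ is the image of the polyhedron $\{(x,y)\in\R^{2n}:Ax\le b,\ y\le x\}$ under the coordinate projection $(x,y)\mapsto y$, hence itself a polyhedron $R=\{y:Gy\le h\}$ by Fourier--Motzkin elimination; moreover $\P\subseteq\P'\subseteq R$, so $R$ has nonempty interior and is full-dimensional. The inclusion $\P'\subseteq R$ is trivial (relax $Ax<b,\ y<x$ to $Ax\le b,\ y\le x$), and since $\P'$ is open this already yields $\P'\subseteq\textup{int}(R)$. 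For the reverse inclusion, I would take $y_0\in\textup{int}(R)$, so that $y_0+\eta\bar{1}\in R$ for some small $\eta>0$, witnessed by some $x$ with $Ax\le b$ and $y_0+\eta\bar{1}\le x$; then I would perturb toward a fixed $x^*\in\P$ by setting $x_\epsilon:=(1-\epsilon)x+\epsilon x^*$, which satisfies $Ax_\epsilon<b$ for every $\epsilon\in(0,1]$, and which also satisfies $x_\epsilon>y_0$ once $\epsilon$ is small enough that $\epsilon\,\|x^*-x\|_\infty<\eta$ (because $x_\epsilon=x+\epsilon(x^*-x)\ge y_0+\eta\bar{1}+\epsilon(x^*-x)$). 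This shows $y_0\in\P'$, so $\P'=\textup{int}(R)$; and since $R$ is a full-dimensional closed convex set, $\overline{\textup{int}(R)}=R$, hence $\overline{\P'}=R$ and $\textup{int}(\overline{\P'})=\textup{int}(R)=\P'$.

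Finally, to obtain the explicit open-polyhedral description, I would fix a representation $R=\{y:Gy\le h\}$ in which no row $G_i$ is zero (if some $G_i=0$, then $h_i\ge 0$ since $R\ne\emptyset$, and the constraint is vacuous and can be discarded) and invoke the standard fact that for such a representation $\textup{int}(R)=\{y:Gy<h\}$: this set is open and contained in $R$, hence contained in $\textup{int}(R)$, while if $G_iy=h_i$ for some $i$ then $y+tG_i^{\top}\notin R$ for all $t>0$, so $y\notin\textup{int}(R)$. Combining with the previous paragraph gives $\P'=\textup{int}(\overline{\P'})=\{y:Gy<h\}$, as claimed. I expect the one genuinely nontrivial point to be the inclusion $\textup{int}(R)\subseteq\P'$: the perturbation toward $x^*$ must simultaneously restore strictness of the linear inequalities $Ax<b$ and of the domination $y_0<x$, and it is this double requirement that forces first enlarging $y_0$ to $y_0+\eta\bar{1}$ inside $R$ and then taking $\epsilon$ small relative to $\eta$.
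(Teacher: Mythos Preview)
Your proof is correct and follows essentially the same approach as the paper: both identify $R=\{y:\exists x,\ Ax\le b,\ y\le x\}$ as a projected polyhedron and use a convex-combination perturbation toward a fixed point of $\P$ to pass between the strict and non-strict versions. The only organizational difference is that the paper first shows $R\subseteq\overline{\P'}$ directly (via a pair $u<v$ in $\P$) and then argues $\P'=\{Gy<h\}$ from the fact that two open convex sets with the same closure coincide, whereas you first show $\textup{int}(R)\subseteq\P'$ (via your $\eta$-enlargement then $\epsilon$-perturbation) and deduce $\overline{\P'}=R$ from $\overline{\textup{int}(R)}=R$; your handling of the final step, including the removal of zero rows of $G$, is in fact slightly more careful than the paper's.
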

	
	\begin{proof}$\P$ is clearly open since it is intersection of finitely many open halfspaces. $\P'$ is open since it is the projection of an open set. 
		
		From now on assume $\P\ne\emptyset$. Clearly $\{ x \in \mathbb{R}^n : Ax \le b\}$ is closed and contains $\P$ and hence $\overline{\P}$. For the reverse inclusion, Let $\hat{x}$ satisfies $A\hat{x}\le b$ and fix arbitrary $u \in \P$. Then note that for $\lambda \in (0,1)$ we have $\lambda \hat{x} + ( 1- \lambda) u \in \P$. By selecting $\lambda$ arbitrarily close to $1$, we can obtain a point arbitrarily close to $\hat{x}$, which means it is a limit point of $\P$ and hence belongs to its closure. 
		
		We now study closure of $\P'$. Clearly $\{y \in\R^n: \exists x,Ax\le b,y \le x \}$ contains $\P'
		$, and is a closed polyhedron since it is the projection of a closed polyhedron $\{(x,y)\in\R^{2n}:Ax\le b,y\le x\}$ onto the $y$ coordinates. To show $\{y\in\R^n:\exists x,Ax\le b,y\le x\}\subseteq \overline{\P'}$, take any $\hx,\hy\in\R^n$ with $A\hx\le b,\hy\le \hx$. Since $\P\ne\emptyset$ there exists $u\in\P$. Since $\P$ is open there exists $\epsilon >0$ such that $v=u + \epsilon \cdot \bar{1} \in \P$, which satisfies $Av<b,u<v$. Therefore for all $0<\lambda<1$ we have $$A(\lambda\hx+(1-\lambda)v)=\lambda A\hx+(1-\lambda)Av<b,\lambda\hy+(1-\lambda)u<\lambda\hx+(1-\lambda)v. $$Therefore $\lambda\hy+(1-\lambda)u\in\P'$. Letting $\lambda\to 1$ we get $\hy\in \overline{\P'}$. 
		
		Now $\P'\supseteq \P\ne\emptyset$ is open, and its closure $\overline{\P'}$, say $\{x:Gx\le h\}$, is a closed polyhedron with nonempty interior. First half of observation states that closure of $\{x:Gx<h\}$ is $\{x:Gx\le h\}$ if $\{x:Gx<h\}\ne\emptyset$. Furthermore $\{x:Gx<h\}$ cannot be empty otherwise $\{x:Gx\le h\}$ is not full-dimensional. Since both $\P'$ and $\{x:Gx<h\}$ are (topologically) open with same closure, they must coincide. 
	\end{proof}

	
	Now we state the necessary and sufficient condition for $\P'\supseteq\R^n_+$. Geometrically, the condition states that the recession cone of $\P$, same as the recession cone of $\overline{\P}$, contains a strictly positive vector. 
	
	\begin{proposition}\label{prop:diagonal_Rn}
		Suppose $\P \cap \mathbb{R}^n_{+} \ne\emptyset$. Then $\P' \supseteq \R^n_+$ if and only if $Ax \leq 0,x>0$ is feasible, or equivalently there does not \blue{exist} $u\ge 0$ such that $u^{\top}A\ge 0$ and is nonzero. 
	\end{proposition}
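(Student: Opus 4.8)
The plan is to prove both directions by relating the condition "$Ax \le 0$, $x > 0$ feasible" to the recession cone of $\overline{\P}$ and using the polyhedral description of $\overline{\P'}$ from Observation~\ref{obs:diagonal_closure}. Recall that $\P' = \{y : \exists x,\ Ax \le b,\ y \le x\}$ has closure $\overline{\P'} = \{y : \exists x,\ Ax \le b,\ y \le x\}$ (already a closed polyhedron), and that $\P' = \operatorname{int}(\overline{\P'})$. Since $\P \cap \R^n_+ \ne \emptyset$ guarantees $\overline{\P}$ and hence $\overline{\P'}$ are nonempty full-dimensional polyhedra, the statement $\P' \supseteq \R^n_+$ is essentially equivalent to $\overline{\P'} \supseteq \R^n_+$ together with a genuinely-interior argument at the boundary of $\R^n_+$, which we handle by a small perturbation (replace a target point $y \ge 0$ by $y$ itself which already lies in the open polyhedron once we know $y + \varepsilon\bar{1}$-type shifts stay inside).

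For the direction ($\Leftarrow$): suppose $Ax_0 \le 0$ with $x_0 > 0$, and pick any $u_0 \in \P \cap \R^n_+$, so $Au_0 < b$ and $u_0 \ge 0$. Given an arbitrary $y \ge 0$, I want to exhibit $x$ with $Ax \le b$ and $y \le x$. Take $x = u_0 + t x_0$ for $t \ge 0$ large: then $Ax = Au_0 + t\,Ax_0 \le Au_0 < b$ for all $t \ge 0$ (the recession direction $x_0$ only helps), and since $x_0 > 0$ and $u_0 \ge 0$, for $t$ large enough $x = u_0 + t x_0 \ge y$ coordinatewise (because every coordinate of $x_0$ is strictly positive, so $tx_0$ eventually dominates $y - u_0$). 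This shows $y \in \overline{\P'}$; to get $y \in \P' = \operatorname{int}(\overline{\P'})$, note we actually produced $Ax < b$ strictly and $x > y$ strictly (choose $t$ slightly larger), so a whole neighborhood of $y$ still admits the same witness $x$, giving $y \in \P'$. Hence $\R^n_+ \subseteq \P'$.

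For the direction ($\Rightarrow$): suppose $\P' \supseteq \R^n_+$; I must produce $x > 0$ with $Ax \le 0$. Since $\P'$ is open and contains the closed ray $\{t\bar{1} : t \ge 0\}$, in particular $t\bar{1} \in \P'$ for all $t \ge 0$, so for each $t$ there is $x^{(t)}$ with $Ax^{(t)} \le b$ and $t\bar{1} \le x^{(t)}$. The natural move is to normalize: set $z^{(t)} = x^{(t)}/t$ for $t \to \infty$; then $z^{(t)} \ge \bar{1} > 0$ and $A z^{(t)} \le b/t \to 0$. The main obstacle — and where I expect to spend the most care — is extracting a limit: $z^{(t)}$ need not be bounded, so I cannot directly take a convergent subsequence. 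The fix is to work with the recession cone: the set $R = \{d : Ad \le 0\}$ is a closed polyhedral cone, and I claim $R \cap \{d > 0\} \ne \emptyset$. One clean way is LP duality / Farkas: the system $Ad \le 0,\ d \ge \bar{1}$ (equivalently $d \ge \bar{1}$, $-Ad \ge 0$) — actually better to argue: since $t\bar 1 \in \overline{\P'}$ for all $t$ and $\overline{\P'} = \{y : \exists x, Ax\le b, y \le x\}$ is a polyhedron whose recession cone is $\{y' : \exists x', Ax' \le 0, y' \le x'\}$, the unbounded ray $\{t\bar{1}\}$ lying in $\overline{\P'}$ forces $\bar{1}$ to be in this recession cone, i.e. there exists $x'$ with $Ax' \le 0$ and $\bar{1} \le x'$; then $x' > 0$ is the desired vector. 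Finally, the equivalence "$Ax \le 0,\ x > 0$ feasible $\iff$ no nonzero $u \ge 0$ with $u^\top A \ge 0$" is a direct application of a Motzkin-type transposition / Stiemke's lemma (or Proposition~\ref{prop:empty} applied to the diagonal forms $g_i(x) = a_i^\top(x\circ x)$, mirroring the proof of Proposition~\ref{prop:Rn}), so I would cite that rather than reprove it.
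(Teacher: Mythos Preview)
Your argument is correct. The $(\Leftarrow)$ direction is essentially identical to the paper's: pick $w\in\P\cap\R^n_+$ and a strictly positive recession direction $v$ with $Av\le 0$, and observe that $w+tv\in\P$ dominates any nonnegative $y$ strictly for large $t$, so $y\in\P'$.

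For $(\Rightarrow)$ you take a genuinely different route. The paper argues by contrapositive: given a nonzero $u\ge 0$ with $u^{\top}A\ge 0$, it picks a coordinate $i$ with $(u^{\top}A)_i>0$ and exhibits the explicit point $\lambda e_i\in\R^n_+$ (for large $\lambda$) that fails to belong to $\P'$, via a short chain of inequalities. You instead argue directly and structurally: the unbounded ray $\{t\bar{1}:t\ge 0\}\subseteq \overline{\P'}$ forces $\bar{1}$ into the recession cone of the polyhedron $\overline{\P'}$, and since that recession cone is the projection of $\{(x',y'):Ax'\le 0,\ y'\le x'\}$, you immediately obtain $x'\ge \bar{1}$ with $Ax'\le 0$. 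Your approach is cleaner conceptually and avoids the case analysis on a specific coordinate; the paper's approach is more elementary in that it does not invoke the (standard but nontrivial) fact that the recession cone of a polyhedral projection is the projection of the recession cone.

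Two minor points. First, your opening line misquotes $\P'$ with non-strict inequalities (you wrote the same formula for $\P'$ and $\overline{\P'}$); this is just a typo since you use the correct definitions later. Second, your suggested alternative of invoking Proposition~\ref{prop:empty} for the Farkas-type equivalence does not quite fit---that proposition concerns emptiness of a set defined by \emph{strict} quadratic inequalities, whereas here you need the alternative for $Ax\le 0,\ x>0$. Citing Motzkin's transposition theorem (as you also do) is the right call, and is exactly what the paper does via Farkas' lemma.
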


	\begin{proof}
		
		Observe $Ax\le 0,x>0$ is infeasible if and only if $Ax\le 0,-x\le -\bar{1}$ is infeasible. By Farkas lemma, this happens if and only if there exist $u,v\ge 0$ such that $u^{\top}A-v^{\top}=0,-v^{\top}\bar{1}<0$, or equivalently there exists $u\ge 0$ such that $u^{\top}A\ge 0$ and is nonzero. 
		
		$(\Rightarrow)$ We show the contrapositive and assume there exists $u\ge 0$ such that $u^{\top}A\ge 0$ and is nonzero. Fix index $i$ where $i^{th}$ coordinate of $u^{\top}A$ is strictly positive. Then for sufficiently large $\lambda$ we have $\lambda u^{\top}Ae_i>u^{\top}b$. We now claim $\lambda e_i\notin \P'$ which completes the proof of this direction. By definition of $\P'$ it suffices to show for any $w>\lambda e_i$ we have $w\notin \P$. Suppose for contradiction that $w\in \P$ for some $w>\lambda e_i$. Then we would have $Aw<b$ since $w\in\P$, and $u^{\top}Aw\le u^{\top}b$ since $u\ge 0$. But then we would also have $u^{\top}A (\lambda e_i)\le u^{\top}b$ since $w>\lambda e_i$ and $u^{\top}A\ge 0$, contradicting $\lambda u^{\top}A e_i> u^{\top}b$. 
		
		

		$(\Leftarrow)$ Choose $w\in \P\cap\R^n_+$ and $v>0$ such that $Av\le 0$. Then for all $\lambda>0$, $A(w+\lambda v)<0$, i.e., $w+\lambda v\in \P$. Observe for any $x\ge 0$ we have $x < w+\lambda v$ for sufficiently large $\lambda$, which shows $x\in\P'$. 
	\end{proof}

	Observation \ref{obs:diagonal_closure} states that $\P'$ is an open polyhedron when $\P\ne \emptyset$. The following proposition states that when $\P\cap\R_+^n\ne\emptyset$, then the defining inequalities of $\P'$ are precisely aggregations of defining inequalities of $\P$, where all coefficients in the aggregation are nonnegative. 
	
	\begin{proposition}\label{prop:facets} Let 
		$\P \cap \mathbb{R}^n_{+} \neq \emptyset$.  Then $\P' = \{ y : \blue{(\alpha^{(i)})^{\top}}y < \beta^i, i \in [k]\}$ for some integer $k$ and 
		\begin{itemize}
			\item $\alpha^{(i)} \geq 0$ and $b^{(i)} \geq 0$ for all $i \in [k]$. 
			\item There exists $\lambda^{(i)} \geq 0$, such that $(\alpha^{(i)})^{\top} = (\lambda^{(i)})^{\top}A$ and $\beta^{(i)} = (\lambda^{(i)})^{\top}b$.
		\end{itemize}
	\end{proposition}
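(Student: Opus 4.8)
The plan is to give an explicit description of the closed polyhedron $\overline{\P'}$ by Fourier–Motzkin elimination, and then descend to $\P'$ using the identity $\P'=\textup{int}(\overline{\P'})$ established in Observation~\ref{obs:diagonal_closure}. Since $\P\cap\R^n_+\ne\emptyset$, in particular $\P\ne\emptyset$, so by Observation~\ref{obs:diagonal_closure} the set $\overline{\P'}=\{y:\exists x,\ Ax\le b,\ y\le x\}$ is a closed polyhedron, $\P'=\textup{int}(\overline{\P'})$, and $\overline{\P'}$ is full-dimensional because it contains the nonempty open set $\P$. Observe that $\overline{\P'}$ is the image of the polyhedron $R=\{(x,y)\in\R^n\times\R^n:Ax\le b,\ y-x\le 0\}$ under the coordinate projection $(x,y)\mapsto y$.

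Now I would apply the projection (Fourier–Motzkin) theorem: $\overline{\P'}$ is described by the inequalities obtained as nonnegative combinations of the rows of $R$ in which the $x$-variables cancel. A combination with weights $\lambda\in\R^m_+$ on $Ax\le b$ and $\mu\in\R^n_+$ on $y-x\le 0$ reads $(A^{\top}\lambda-\mu)^{\top}x+\mu^{\top}y\le b^{\top}\lambda$, and the $x$-terms vanish exactly when $\mu=A^{\top}\lambda$, which is an admissible (nonnegative) choice iff $A^{\top}\lambda\ge 0$. Hence $\overline{\P'}=\{y:(A^{\top}\lambda)^{\top}y\le b^{\top}\lambda\ \text{for all }\lambda\in\Lambda\}$, where $\Lambda=\{\lambda\in\R^m_+:A^{\top}\lambda\ge 0\}$ is a polyhedral cone; letting $\lambda^{(1)},\dots,\lambda^{(k)}$ generate its finitely many extreme rays and setting $\alpha^{(i)}=A^{\top}\lambda^{(i)}$ and $\beta^{(i)}=b^{\top}\lambda^{(i)}$ gives $\overline{\P'}=\{y:(\alpha^{(i)})^{\top}y\le\beta^{(i)},\ i\in[k]\}$. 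By construction $\alpha^{(i)}\ge 0$ and the second bullet of the statement holds. For the first bullet it remains to see $\beta^{(i)}\ge 0$, and this is where $\P\cap\R^n_+\ne\emptyset$ is used: fixing $w\in\P\cap\R^n_+$, so $Aw<b$, we get $\beta^{(i)}=(\lambda^{(i)})^{\top}b\ge(\lambda^{(i)})^{\top}Aw=(\alpha^{(i)})^{\top}w\ge 0$, the last inequality because $\alpha^{(i)}\ge 0$ and $w\ge 0$.

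Finally I would pass from $\overline{\P'}$ to $\P'=\textup{int}(\overline{\P'})$. The step requiring care is to check that no inequality degenerates, i.e. that we never have both $\alpha^{(i)}=0$ and $\beta^{(i)}=0$: if $A^{\top}\lambda^{(i)}=0$ and $b^{\top}\lambda^{(i)}=0$ then for any $u\in\P$ we have $(\lambda^{(i)})^{\top}Au=0=(\lambda^{(i)})^{\top}b$, yet $Au<b$ would force strict inequality unless $\lambda^{(i)}=0$, contradicting extremality of the ray. Consequently each set $\{y:(\alpha^{(i)})^{\top}y<\beta^{(i)}\}$ is an open halfspace (when $\alpha^{(i)}\ne 0$) or all of $\R^n$ (when $\alpha^{(i)}=0$, $\beta^{(i)}>0$), so $\bigcap_{i\in[k]}\{y:(\alpha^{(i)})^{\top}y<\beta^{(i)}\}$ is open and contained in $\overline{\P'}$, hence in $\textup{int}(\overline{\P'})$; conversely a point lying on any bounding hyperplane $(\alpha^{(i)})^{\top}y=\beta^{(i)}$ has $\alpha^{(i)}\ne 0$ and so is not interior. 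This yields $\P'=\{y:(\alpha^{(i)})^{\top}y<\beta^{(i)},\ i\in[k]\}$, as desired. I expect this last bookkeeping — ensuring that replacing $\le$ by $<$ in the possibly redundant description of $\overline{\P'}$ genuinely produces its interior — to be the only delicate point; the Fourier–Motzkin computation itself is routine, and it is precisely here that $\P\ne\emptyset$ (excluding a degenerate $0\le 0$ constraint) and full-dimensionality of $\overline{\P'}$ enter.
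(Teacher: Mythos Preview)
Your proof is correct and follows essentially the same route as the paper: pass to the closed polyhedron $\overline{\P'}$ via Observation~\ref{obs:diagonal_closure}, use polyhedral projection theory to identify the valid inequalities as nonnegative combinations $(\lambda^{\top}A,\lambda^{\top}b)$ with $A^{\top}\lambda\ge 0$, and then return to $\P'$ by taking interiors. The only cosmetic difference is that the paper fixes a facet-defining inequality $\alpha^{\top}y\le\beta$ of $\overline{\P'}$ and applies LP duality to the program $\max\{\alpha^{\top}y:Ax\le b,\ y\le x\}$ to extract the multipliers $\lambda$, whereas you apply Fourier--Motzkin directly and enumerate the extreme rays of the cone $\Lambda=\{\lambda\ge 0:A^{\top}\lambda\ge 0\}$; these are two phrasings of the same projection argument. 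Your treatment of the passage from $\le$ to $<$ is in fact more careful than the paper's, which simply invokes Observation~\ref{obs:diagonal_closure} without checking that a possibly redundant description behaves correctly.
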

	\begin{proof}
		Consider the lifted open-polyhedron $Q$ and its closure given by Observation \ref{obs:diagonal_closure}: 
		$$Q := \{(x,y)\in\R^{2n}: Ax < b, y < x\},\overline{Q} := \{(x,y)\in\R^{2n}: Ax \le b, y \le x\}.$$
		
		Observation \ref{obs:diagonal_closure} states that $\overline{\P'}=\{y\in\R^n: \exists x:Ax\le b,y\le x\}$ is the projection of $\overline{Q}$ onto the $y$ coordinates, which means $\overline{\P'}$ has description $\overline{P'}=\{ y : (\alpha^{(i))^{\top}}y \le \beta^i, i \in [k]\}$ for some integer $k$, where each inequality is facet defining. Observation \ref{obs:diagonal_closure} also shows that $\P'$ is given by the corresponding open inequalities $\P' = \{ y : (\alpha^{(i))^{\top}}y < \beta^i, i \in [k]\}$. 
		
		Now it remains to show these inequalities have the desired form. Consider a facet defining inequality $\alpha ^{\top}x \leq \beta$ for  $\overline{\P'}=\textup{proj}_y(\overline{Q})$. We can consider the following associated LP and its dual:
		\begin{eqnarray*}
			&\textup{max} &\alpha^{\top} y \\
			&\textup{s.t.} &Ax \leq b \\
			&& y - x \leq 0. 
		\end{eqnarray*}
		\begin{eqnarray*}
			&\textup{min} & \lambda^{\top}b \\
			&\textup{s.t.} & \lambda^{\top}A  - \mu^{\top}I  = 0\\
			&& \mu^{\top}I = \alpha^{\top}\\
			&& \lambda \geq 0, \mu \geq 0.
		\end{eqnarray*}
		Note that since the primal is feasible and bounded, this implies the dual is feasible and bounded. In particular, let  $(\lambda^*, \mu^*)$ be the optimal dual solution. Then, from the second constraint of the dual $\alpha = \mu^* \geq 0$. Since $\P \cap \mathbb{R}^n_{+} \neq \emptyset$, we have that $\beta \geq 0$. 
		
		Also note the first constraint of the dual now shows that $(\lambda^*)^{\top} A = \mu^* = \alpha$ and the objective of the dual optimal solution shows that $(\lambda^*)^{\top}b = \beta$. 
	\end{proof}
	
	Now we are ready to describe the convex hull when all quadratics are diagonal. 
	
	
		
		
	
	
	\begin{proof}[Proof of Theorem \ref{thm:diagonal}]
		Let $A\in\R^{m \times n},b\in\R^m$ be defined such that $Q_i=\textup{diag}(a_{i1},...,a_{in},-b_i)$. In this definition $S=\{x\in\R^n: x\circ x\in\P \}$ where $\P=\{x\in\R^n: Ax < b \}$. Let $\P'=\{y\in\R^n: \exists x\in \P,y < x\}$. As shown in Proposition \ref{prop:conv}, $\conv(S)=\{x\in\R^n: x\circ x\in\P'\}$. We have three cases
		
		\begin{itemize}
			\item $S = \emptyset$. Since $x\circ x\ge 0$ for all $x$, $S=\emptyset$ if and only if $Ax<b,x\ge 0$ is infeasible. By Motzkin's transposition theorem \cite{schrijver1998theory}, this happens if and only if there exists $u,v\ge 0,u^{\top}A-v^{\top}=0$ such that $u\ne 0$ and $u^{\top}b\le 0$, i.e. there exists nonzero $u\ge 0$ such that $u^{\top}A\ge 0,u^{\top}b\le 0$. Note if $u=0$ then $u^{\top}b\ge 0$ is automatically true. Now observe $u^{\top}A\ge 0,u^{\top}b\le 0$ is equivalent to $\sum_{i=1}^m u_i Q_i\succeq 0$.

			\item $\textup{conv}(S) = \mathbb{R}^n$. Suppose $S\ne\emptyset$, or equivalently $\P\cap\R_+^n\ne\emptyset$. Then $\conv(S)=\R^n$ if and only if $\P'\supseteq \R_+^n$, and by Proposition \ref{prop:diagonal_Rn} this is equivalent to existence of $u\ge 0$ such that $u^{\top}A\ge 0$ and is nonzero. 
			
			\item $\emptyset \subsetneq \textup{conv}(S) \subsetneq \mathbb{R}^n$: \blue{In this case} $\P\cap \R^n_+\ne\emptyset$. Therefore, using Proposition \ref{prop:conv} and Proposition \ref{prop:facets},  we obtain that $\textup{conv}(S)$ is given by finitely many aggregations where the aggregated constraint is convex, as each defining inequality of $\P'$ has the form $(\lambda^{(j)})^{\top}Ay<(\lambda^{(j)})^{\top}b$, for some $\lambda^{(j)}\ge 0$, corresponding to the following aggregation $$S_{\lambda^{(j)}}=\{x\in\R^n: \begin{bmatrix}
				x\\1
			\end{bmatrix}^{\top} \left(\sum_{i=1}^j \lambda_i^{(j)}Q_i\right)\begin{bmatrix}
				x\\1
			\end{bmatrix}< 0 \}.$$ 
			
			The leading $n\times n$ principal submatrix of $Q_{\lambda^{(j)}}\sum_{i=1}^j \lambda_i^{(j)}Q_i$ is PSD as $(\lambda^{(j)})^{\top}A\ge 0$ from Proposition \ref{prop:facets}, so $Q_{\lambda^{(j)}}$ has at most one negative eigenvalue and $S_{\lambda^{(j)}}$ is convex, which means $\lambda^{(j)}\in\Omega$. 
		\end{itemize}
	\end{proof}

	
	\section{Finite number of aggregations sufficient to obtain the convex hull}\label{sec:finite}

	
	We let $\Theta=\{\theta\in\R^m: \sum_{i=1}^m\theta_iQ_i\succeq 0 \}$ denote the set of linear combinations that gives rise to a PSD matrix. Our first observation is that one may always improve an aggregation $\lambda\in\Omega$ (where $S_{\lambda}\ne\R^n$) by elements in $\Theta$ as long as it still stays inside nonnegative orthant. \blue{Recall that $\Omega = \{\lambda \in \mathbb{R}^m_+\setminus\{0\} : \conv(S)\subseteq S_{\lambda} \textup{ and }Q_{\lambda} \textup{ has at most one negative eigenvalue}\}$. }
	
	\begin{proposition}\label{prop:improve}
		Assume $S\ne\emptyset$ and $\conv(S)\ne\R^n$. Let $\Omega_1=\Omega\setminus\{\lambda\in\Omega:S_{\lambda}=\R^n \}$. Let $\lambda\in\Omega_1$ and $\theta\in\Theta$ so that $\lambda'=\lambda+\theta\in\R^n_+\setminus\{0\}$. Then $\lambda'\in\Omega_1$ and $S_{\lambda'}\subseteq S_{\lambda}$.  
	\end{proposition}
	
	\blue{Note that Proposition \ref{prop:improve} is for arbitrary quadratics, and in particular does not assume HHC.}
	
	\begin{proof}
		Since $Q_{\lambda'}\succeq Q_{\lambda}$ we have $f_{\lambda'}\ge f_{\lambda}$ and $S_{\lambda'}\subseteq S$, which also implies $S_{\lambda'}\ne\R^n$ since $S_{\lambda}\ne\R^n$. The next step is to show $Q_{\lambda'}$ has exactly one negative eigenvalue. Since $Q_{\lambda'}\succeq Q_{\lambda}$, $Q_{\lambda'}$ cannot have more negative eigenvalues than $Q_{\lambda}$, due to Weyl's inequality on eigenvalues \cite{Horn1985matrix}. Since $Q_{\lambda}$ has at most one negative eigenvalue, same must be true for $Q_{\lambda'}$. On the other hand $Q_{\lambda'}$ is not PSD since $S\ne\emptyset$ and $S\subseteq S_{\lambda'}$. Thus it has exactly one negative eigenvalue. 
		
		It remains to show $\conv(S)\subseteq S_{\lambda'}$. We consider several cases based on whether $S_{\lambda'}$ and $S_{\lambda}$ are convex or unions of two disjoint convex sets. 
		
		\begin{itemize}
			\item $S_{\lambda'}$ is convex then we are done since $S\subseteq S_{\lambda'}$. 
			
			\item $S_{\lambda'}$ is union of two disjoint convex sets, i.e., $S_{\lambda'}=C_1'\cup C_2'$ where $C_1',C_2'$ are disjoint and convex. There are two subcases about $S_{\lambda}$. 
			\begin{itemize}
				\item $S_{\lambda}$ is convex. From Lemma \ref{lem:single_quadratic} $\conv(S_{\lambda'})=\R^n$. Since $S_{\lambda'}\subseteq S_{\lambda}$ we must have $S_{\lambda}=\R^n$, contradicting our assumption that $\lambda\in\Omega_1$. 
				
				\item $S_{\lambda}$ is union of two disjoint convex sets, i.e., $S_{\lambda}=C_1\cup C_2$ where $C_1,C_2$ are disjoint and convex. Since $\conv(S)\subseteq S_{\lambda}$, \blue{$\conv(S)$} must lie entirely in one of \blue{$C_1$ or $C_2$}, say $C_1$ upon relabeling, which means $\conv(S)\cap C_2=\emptyset$ since $C_1,C_2$ are disjoint. Since $S_{\lambda'}\subseteq S_{\lambda}$, upon relabeling assume $C_1'\subseteq C_1$, $C_2'\subseteq C_2$. This means $\conv(S)\cap C_2'=\emptyset$ and $\conv(S)\subseteq C_1'\subseteq S_{\lambda'}$. Hence $\lambda'\in \Omega_1$. 
			\end{itemize}
	\end{itemize}\end{proof}
	
	We now prove \blue{under the assumption that every triple of quadratics satisfies the PDLC condition, elements in $\Omega_1$ with support at most 2 describe the same set as all elements in $\Omega_1$.} The idea is to repeatedly improve along positive definite linear combinations to reduce the support of a good aggregation.

	\begin{proposition}\label{prop:2aggregation}
		Assume $S\ne\emptyset$ and $\conv(S)\ne\R^n$. \blue{Let} $\Omega_1=\Omega\setminus\{\lambda\in\Omega:S_{\lambda}=\R^n \}$. Furthermore assume for all distinct $i,j,k\in [m]$ there exist scalars $p_{ijk},q_{ijk},r_{ijk}\in\R$ such that $p_{ijk} Q_i+q_{ijk}Q_j+r_{ijk}Q_k\succ 0$. Let $\Omega_2=\{\lambda\in\Omega_1: |\{i:\lambda_i>0\}|\le 2 \}$. Then $\bigcap_{\lambda\in\Omega_2}S_{\lambda}=\bigcap_{\lambda\in\Omega_1}S_{\lambda}$. 
	\end{proposition}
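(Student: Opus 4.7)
The plan is to prove $\bigcap_{\lambda\in\Omega_2}S_{\lambda}=\bigcap_{\lambda\in\Omega_1}S_{\lambda}$ by iteratively refining any $\lambda\in\Omega_1$ into a $\lambda^*\in\Omega_2$ with $S_{\lambda^*}\subseteq S_{\lambda}$. The inclusion $\bigcap_{\lambda\in\Omega_1}S_{\lambda}\subseteq\bigcap_{\lambda\in\Omega_2}S_{\lambda}$ is immediate from $\Omega_2\subseteq\Omega_1$. For the reverse, given $y\notin\bigcap_{\lambda\in\Omega_1}S_{\lambda}$, I would pick $\lambda\in\Omega_1$ with $y\notin S_{\lambda}$ and iteratively invoke Proposition~\ref{prop:improve}: each step produces $\theta\in\Theta$ such that $\lambda'=\lambda+\theta\in\Omega_1$, $S_{\lambda'}\subseteq S_{\lambda}$, and $|\textup{supp}(\lambda')|<|\textup{supp}(\lambda)|$; the process terminates at some $\lambda^*\in\Omega_2$ with $y\notin S_{\lambda^*}$.

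The core step is a single support reduction. Given $\lambda\in\Omega_1$ with $|\textup{supp}(\lambda)|\ge 3$, fix three indices $i,j,k\in\textup{supp}(\lambda)$. By the triple PDLC hypothesis there exist $p,q,r\in\R$ with $Q_{\theta^*}\succ 0$ for $\theta^*:=pe_i+qe_j+re_k\in\Theta$. Two sub-cases arise. If at least one of $p,q,r$ is strictly negative, set $t:=\min\{\lambda_s/|\theta^*_s|:\theta^*_s<0\}>0$ and $\theta:=t\theta^*\in\Theta$; then $\lambda+\theta\ge 0$ and $(\lambda+\theta)_s=0$ for some $s$, reducing the support by at least one. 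If instead $p,q,r\ge 0$, pick $t$ large enough that $tQ_{\theta^*}-Q_{\lambda}\succeq 0$ (which holds for $t\ge\lambda_{\max}(Q_\lambda)/\lambda_{\min}(Q_{\theta^*})$ by Weyl's inequality on eigenvalues) and set $\theta:=t\theta^*-\lambda\in\Theta$; then $\lambda+\theta=t\theta^*\ge 0$ is supported in $\{i,j,k\}$, so the support drops to at most $3$.

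Combining these sub-cases, the support decreases from any $k\ge 4$ to at most $k-1$ per step, terminating at support at most $3$. The remaining step, from support $3$ to support $2$, is more delicate. Here I would use the observation that $S\ne\emptyset$ together with $\lambda\in\Omega_1$ forces $Q_\lambda$ to have exactly one negative eigenvalue (otherwise $Q_\lambda\succeq 0$, contradicting $\emptyset\ne S\subseteq S_\lambda$), so at least one $Q_{l^*}$ with $l^*\in\textup{supp}(\lambda)$ must fail to be PSD. Using this indefiniteness, I would show that the closed PSD cone $\overline{\Theta}_{l^*jk}:=\{(p,q,r)\in\R^3:pQ_{l^*}+qQ_j+rQ_k\succeq 0\}$ is not contained in $\R^3_{\ge 0}$; this produces a PSD direction with a strictly negative coordinate, which via the first sub-case drops the support to $2$.

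The main obstacle will be making this last step rigorous, namely proving that $\overline{\Theta}_{l^*jk}$ escapes the nonnegative orthant whenever $Q_{l^*}\not\succeq 0$. My proposed approach is to perturb a PDLC direction $(p^*,q^*,r^*)$ along $(-1,0,0)$ and argue that the matrix $M-sQ_{l^*}$, where $M:=p^*Q_{l^*}+q^*Q_j+r^*Q_k\succ 0$, remains PSD for $s$ large enough to make $p^*-s<0$, by using a vector $v$ with $v^{\top}Q_{l^*}v<0$ to control the inequality in the direction where $Q_{l^*}$ is negative. The indefinite case in which $Q_{l^*}$ has both positive and negative eigenvalues may require a two-parameter perturbation jointly in $(p,q)$ rather than a single line, leveraging the positive contributions of $Q_j$ or $Q_k$ in the directions where $Q_{l^*}$ is positive; this will be the most delicate part of the argument.
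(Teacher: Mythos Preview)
Your overall strategy---iteratively reducing the support of $\lambda\in\Omega_1$ via Proposition~\ref{prop:improve}---matches the paper exactly, and your sub-case~1 is precisely the move the paper makes. The genuine gap is that you treat sub-case~2 ($p,q,r\ge 0$) as a possibility requiring its own handling, when in fact it \emph{cannot occur}. If $\theta^*=pe_i+qe_j+re_k\ge 0$ with $Q_{\theta^*}\succ 0$, then $\theta^*$ is itself a nonnegative aggregation weight and $S_{\theta^*}=\{x:F_{\theta^*}(x)<0\}=\emptyset$; since $S\subseteq S_{\theta^*}$ this contradicts $S\ne\emptyset$. This one-line observation is the paper's key step, and it applies uniformly whenever $|\textup{supp}(\lambda)|\ge 3$, including the $3\to 2$ reduction.

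Because you miss this, two problems cascade. First, your proposed handling of sub-case~2 is actually broken: setting $\lambda':=t\theta^*$ gives $Q_{\lambda'}\succ 0$, hence $S_{\lambda'}=\emptyset$, so $\conv(S)\not\subseteq S_{\lambda'}$ and $\lambda'\notin\Omega$; you have not produced an element of $\Omega_1$ (equivalently, the hypotheses of Proposition~\ref{prop:improve} are met but its conclusion $\lambda'\in\Omega_1$ would force $S=\emptyset$, so that very application is a proof by contradiction that sub-case~2 is vacuous). Second, your entire final paragraph---the ``delicate'' argument that $\overline{\Theta}_{l^*jk}$ escapes $\R^3_{\ge 0}$ via perturbations of PDLC directions and indefiniteness of $Q_{l^*}$---is trying to manufacture a PSD combination with a strictly negative coordinate, which is exactly what the impossibility of sub-case~2 already gives you for free. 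Once you observe that every triple PDLC direction must have at least one strictly negative entry, sub-case~1 alone handles every step of the descent, and no separate analysis of the $3\to 2$ case is needed.
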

	
	\begin{proof}
		Clearly $\bigcap_{\lambda\in\Omega_1}S_{\lambda}\subseteq\bigcap_{\lambda\in\Omega_2}S_{\lambda}$ since $\Omega_2\subseteq\Omega_1$. For reverse inclusion we show that for any $\lambda\in\Omega_1$ with $|\{i:\lambda_i>0 \}|\ge 3$ there exists $\lambda'\in\Omega_1$ with $|\{i:\lambda'_i>0 \}|<|\{i:\lambda_i>0 \}|$ with $S_{\lambda'}\subseteq S_{\lambda}$. Then repeatedly applying this subroutine whenever possible, we eventually get $\lambda''\in\Omega_1$ with $|\{i:\lambda''_i>0 \}|\le 2$ and $S_{\lambda''}\subseteq S_{\lambda}$. 
		
		Fix $i,j,k\in \{l:\lambda_l>0 \}$ and $p_{ijk},q_{ijk},r_{ijk}\in\R$ such that $p_{ijk} Q_i+q_{ijk}Q_j+r_{ijk}Q_k\succ 0$. 
		If $\lambda$ is a multiple of $v=p_{ijk}e_i+q_{ijk}e_j+r_{ijk}e_k$, we can perturb $(p_{ijk},q_{ijk},r_{ijk})$ so that $\lambda$ is not a multiple of $v=p_{ijk}e_i+q_{ijk}e_j+r_{ijk}e_k$ and $p_{ijk} Q_i+q_{ijk}Q_j+r_{ijk}Q_k\succ 0$. 
		Also note that $p_{ijk},q_{ijk},r_{ijk}$ cannot be all nonnegative, since otherwise we have that $S\subseteq S_v=\emptyset$.
		Now let $\alpha_0=\max\{\alpha>0:\lambda+\alpha v\in\R^m_+ \}$ and $\lambda'=\lambda+\alpha_0 v$. More explicitly $\alpha_0$ is the minimum between $\frac{\lambda_i}{-p_{ijk}},\frac{\lambda_j}{-q_{ijk}},\frac{\lambda_k}{-r_{ijk}}$ where we only consider the terms where denominator is positive. Then clearly $|\{i:\lambda'_i>0 \}|<|\{i:\lambda_i>0 \}|$, and $\lambda'\in \Omega_1,S_{\lambda'}\subseteq S_{\lambda}$ due to Proposition \ref{prop:improve}. 
	\end{proof}

	Now we study the structure of aggregations with fixed support of size two \blue{that contain $\conv(S)$, and show they are either empty or form one or two intervals, whose endpoints are the same as the intervals described in \cite{yildiran2009convex}. Thus} for the description of the convex hull it suffices to take either the two outermost endpoints, or two endpoints of the same interval. The key ingredient in our proof is the geometry of the set defined by two quadratic inequalities, which was studied in \cite{yildiran2009convex}. Here we list the results that are needed for our proof, and describe their implications. Note that the versions stated in our paper differ by a sign compared to \cite{yildiran2009convex}, as we study the set of points where the quadratic inequalities are negative (instead of positive). 
	
	In words, these results show that convex combinations of any two quadratics contain at most two intervals of matrices that have at most one negative eigenvalue. The endpoints of the intervals can be recognized as points where the rank drops. Furthermore, homogenized good aggregations lying in the same interval have an additional geometric property described in Lemma \ref{lem:yildiran_hyperplane}. 
	
	Now we formally state these results. Let $\nu(M)$ denote the number of negative eigenvalues of a matrix $M$. 
	
	\begin{theorem}[Lemma 2 of \cite{yildiran2009convex}]\label{thm:yildiran_intervals}
		Let $Q_1,Q_2$ be two symmetric matrices, and let $\Lambda=\{0\le \alpha\le 1: \nu(\alpha Q_1+(1-\alpha)Q_2)=1 \}$. If $\Lambda\ne \emptyset$, then there exists $n_c\in\{1,2\}$ and $\Lambda=\bigcup_{1\le j\le n_c}\{\I_j\}$, where each $\I_j$ is a closed interval of $[0,1]$ and $\I_j,\I_k$ are disjoint if $j\ne k$. Furthermore, the endpoints of $\I_j$ are real roots of $\det(\alpha Q_1+(1-\alpha)Q_2)=0$, known as generalized eigenvalues (GEVs) of $Q_1$ and $Q_2$. 
	\end{theorem}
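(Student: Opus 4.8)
The plan is to study the integer step function $\phi(\alpha):=\nu(M(\alpha))$, where $M(\alpha):=\alpha Q_1+(1-\alpha)Q_2$, via the polynomial $p(\alpha):=\det M(\alpha)$ of degree $\le n$. Since the eigenvalues of $M(\alpha)$ vary continuously with $\alpha$, an eigenvalue can change sign only by passing through $0$, so $\phi$ is locally constant on $\{\alpha\in[0,1]:p(\alpha)\ne 0\}$. If $p\not\equiv 0$, its finitely many roots cut $[0,1]$ into relatively open subintervals on each of which $\phi$ is constant; hence $\Lambda=\{\phi=1\}$ is a finite union of relative intervals, and every endpoint of a maximal one lying in $(0,1)$ is a root of $p$ --- that is, a generalized eigenvalue of $(Q_1,Q_2)$, which gives the last sentence of the statement. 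If $p\equiv 0$ the pencil $\{sQ_1+tQ_2\}$ is singular, and I would first reduce to the regular case by restricting $Q_1,Q_2$ to a complement of its common nullspace (this leaves $\nu(M(\alpha))$ unchanged). What remains is the bound $n_c\le 2$.

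For that, consider first the main case in which some $\gamma Q_1+(1-\gamma)Q_2$ is positive definite (automatic whenever this lemma is applied via PDLC). Then $Q_1,Q_2$ are simultaneously diagonalizable by congruence, and since congruence preserves inertia I may assume $M(\alpha)=\textup{diag}(h_1(\alpha),\dots,h_n(\alpha))$ with each $h_i$ affine in $\alpha$. Each $\{\alpha\in[0,1]:h_i(\alpha)<0\}$ is then empty, all of $[0,1]$, a \emph{left piece} $[0,c_i)$, or a \emph{right piece} $(c_i,1]$. Writing $c$ for the number of $h_i$ negative throughout, $D(\alpha)$ for the number of active left pieces at $\alpha$ (non-increasing in $\alpha$) and $U(\alpha)$ for the number of active right pieces (non-decreasing), one obtains
\[
\phi(\alpha)=c+D(\alpha)+U(\alpha).
\]
If $c\ge 2$ then $\Lambda=\emptyset$; if $c=1$ then $\Lambda=\{D=0\}\cap\{U=0\}$ is a single interval; and if $c=0$ then
\[
\Lambda=\big(\{D=1\}\cap\{U=0\}\big)\cup\big(\{D=0\}\cap\{U=1\}\big),
\]
a union of at most two intervals, because every set $\{D=k\}$ or $\{U=k\}$, being the difference of two nested super-level sets of a monotone integer step function, is an interval, and an intersection of intervals is an interval. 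Thus $n_c\in\{1,2\}$, the endpoints are among the $c_i$ (roots of $p$ in $(0,1)$) and the points $0,1$, and evaluating $\phi$ at each $c_i$ decides whether it lies in $\Lambda$ --- it does precisely when the pieces switching at $c_i$ are left/right pieces (so the neighbouring $\phi$-value is $\ge 2$), which is the correct reading of ``closed interval'' apart from endpoints at $0$ or $1$.

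The general case (no definite combination) would be handled analogously using the canonical form of a regular symmetric matrix pencil under real congruence: $M(\alpha)$ decomposes into blocks, one for each real or complex-conjugate-pair generalized eigenvalue; a complex-pair block is nonsingular throughout and adds a constant to $\phi$, while a real-eigenvalue block is singular at a single $\alpha$ and, crossing it, changes its contribution to $\phi$ monotonically by a fixed amount. Grouping the increasing and decreasing block-contributions gives $\phi=c+D+U$ with $D$ non-increasing and $U$ non-decreasing, and the argument of the previous paragraph then applies verbatim. The hard part throughout is the bound $n_c\le 2$: continuity of eigenvalues by itself yields only $n_c\le\lfloor n/2\rfloor+1$, and the sharpening to $2$ rests entirely on the monotone splitting $\phi=c+D+U$, which is essentially free once a definite combination is present but in general requires the (routine but tedious) block-by-block check that each canonical block contributes a monotone step function. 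A cleaner route, which I would also pursue, is to show directly that whenever $\Lambda$ is disconnected the pencil must contain a definite element, thereby reducing everything to the diagonalizable case.
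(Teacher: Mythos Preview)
The paper does not prove this statement; it is quoted verbatim from Yildiran (2009) and used as a black box in Section~8. So there is no ``paper's own proof'' to compare against, and the question is simply whether your argument stands on its own.

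Your treatment of the simultaneously diagonalizable case (some $\gamma Q_1+(1-\gamma)Q_2\succ 0$) is correct and is the natural argument: the splitting $\phi=c+D+U$ with $D$ non-increasing and $U$ non-decreasing gives $n_c\le 2$ immediately. This already covers every use of the lemma in the present paper, since Theorem~\ref{thm:finite} assumes pairwise PDLC.

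The remaining cases, however, are not handled. Your reduction of the singular pencil case by ``restricting to a complement of the common nullspace'' is insufficient: a symmetric pencil can have $\det(sQ_1+tQ_2)\equiv 0$ with $\ker Q_1\cap\ker Q_2=\{0\}$. For instance
\[
Q_1=\begin{pmatrix}0&0&1\\0&0&0\\1&0&0\end{pmatrix},\qquad
Q_2=\begin{pmatrix}0&0&0\\0&0&1\\0&1&0\end{pmatrix}
\]
is singular but has trivial common kernel. The correct reduction requires the Kronecker canonical form for singular symmetric pencils, whose singular blocks must be analyzed separately. Likewise, for the general regular case you assert that each real-GEV block ``changes its contribution to $\phi$ monotonically'' across its unique singular point, but for a size-$k$ block this is exactly the nontrivial signature computation that Yildiran carries out; it is not a consequence of continuity and is not established by your sketch. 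Your proposed alternative route---showing that disconnected $\Lambda$ forces a definite combination---would be clean if true, but you give no argument for it.

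One small further point: the intervals need not be closed at an interior endpoint where $\phi$ drops to $0$ (take $Q_1=I_2$, $Q_2=\textup{diag}(-1,1)$, giving $\Lambda=[0,\tfrac12)$). This is a wrinkle in the statement as quoted rather than in your reasoning, but your paragraph on closedness does not resolve it.
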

	
	\begin{lemma}[Lemma 7 of \cite{yildiran2009convex}]\label{lem:yildiran_hyperplane}
		Let $\I$ be one interval in the previous theorem. Then there exists a linear hyperplane $L$ that does not intersect $\{x:x^{\top}(\alpha Q_1+(1-\alpha)Q_2)x<0 \}$ for any $\alpha\in\I$. 
	\end{lemma}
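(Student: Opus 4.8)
The plan is to reduce the statement to a condition involving only the two endpoints of $\I$, and then to settle that condition using the structure of the pencil. Write $M_\alpha=\alpha Q_1+(1-\alpha)Q_2$ and $\mathcal{P}_\alpha=\{x\in\R^n:x^{\top}M_\alpha x<0\}$, and record the trivial equivalence: a linear hyperplane $w^{\perp}$ misses $\mathcal{P}_\alpha$ if and only if $M_\alpha$ restricted to $w^{\perp}$ is positive semidefinite; Theorem~\ref{thm:SCC} guarantees such a $w$ exists for each individual $\alpha$, so the whole task is to find one $w$ valid for \emph{all} $\alpha\in\I$ simultaneously. Writing $\I=[\alpha_0,\alpha_1]$ (the degenerate case $\alpha_0=\alpha_1$ being exactly Theorem~\ref{thm:SCC}), the key point is that $\alpha\mapsto M_\alpha$ is affine, so for every $\alpha\in[\alpha_0,\alpha_1]$ the matrix $M_\alpha$ is a convex combination of $M_{\alpha_0}$ and $M_{\alpha_1}$; restriction to a fixed subspace $w^{\perp}$ preserves convex combinations, and a convex combination of positive semidefinite matrices is positive semidefinite. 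Hence it suffices to find a single nonzero $w$ with $M_{\alpha_0}|_{w^{\perp}}\succeq 0$ and $M_{\alpha_1}|_{w^{\perp}}\succeq 0$, i.e. one hyperplane avoiding both $\mathcal{P}_{\alpha_0}$ and $\mathcal{P}_{\alpha_1}$.

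For the endpoints I would use Theorem~\ref{thm:yildiran_intervals}: an endpoint of $\I$ lying in the interior of $[0,1]$ is a generalized eigenvalue, so the corresponding $M_{\alpha_i}$ is singular (an endpoint equal to $0$ or $1$ needs no extra hypothesis and is handled with the nonsingular version of what follows). Diagonalizing $M_{\alpha_i}$ orthogonally and using that it has exactly one negative eigenvalue, one obtains the explicit description: $M_{\alpha_i}|_{w^{\perp}}\succeq 0$ holds precisely when $w\perp\ker M_{\alpha_i}$ and $w^{\top}M_{\alpha_i}^{+}w\le 0$, where $M^{+}$ is the Moore--Penrose pseudoinverse; thus the admissible normals for $\mathcal{P}_{\alpha_i}$ form one half of the (non-convex) ``light cone'' of $M_{\alpha_i}^{+}$ inside $(\ker M_{\alpha_i})^{\perp}$. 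To produce a common admissible normal I would exploit that $\I$ is a \emph{maximal} interval with $\nu(M_\alpha)=1$: just outside each endpoint $\nu$ must jump \emph{up} (to at least $2$), which fixes the sign with which the vanishing eigenvalue crosses zero at $\alpha_i$, hence the orientation of $M_{\alpha_i}^{+}$ relative to the family $M_\alpha^{-1}$ for interior $\alpha$. Tracking the (simple) negative eigenvector $v(\alpha)$ continuously across $\I$ together with a normal $w_\alpha$ in the light cone of $M_\alpha^{-1}$, and passing to the limits $\alpha\downarrow\alpha_0$ and $\alpha\uparrow\alpha_1$, yields normals in the two endpoint admissible sets; their compatibility is ``coherent orientation'' of the two components of $\mathcal{P}_\alpha$ along $\I$, which holds because for any fixed $x$ the map $\alpha\mapsto x^{\top}M_\alpha x$ is affine, hence $\le 0$ on all of $\I$ whenever it is $\le 0$ at both endpoints, so $x$ cannot lie in the component of $\mathcal{P}_{\alpha_0}$ containing $v(\alpha_0)$ and in the \emph{other} component of $\mathcal{P}_{\alpha_1}$.

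The cleanest self-contained route, however, is to put the pair $(Q_1,Q_2)$ into the simultaneous (Weierstrass--Kronecker) canonical form of a symmetric pencil: the interval $\I$ is governed by a single indefinite block of size $1$ or $2$, on which the required common hyperplane can be written down directly, while on every other block the pencil is semidefinite and imposes no constraint; this is Yildiran's argument, and since we only need the statement as quoted we may simply invoke \cite[Lemma~7]{yildiran2009convex}. The main obstacle in a hands-on proof is precisely the control of the \emph{common isotropic directions} of the pencil (nonzero $z$ with $z^{\top}Q_1z=z^{\top}Q_2z=0$): these are the directions that could a priori make the convex hull of the bundle of negative cones $\bigcup_{\alpha\in\I}\mathcal{P}_\alpha$ contain a line and thereby obstruct a separating hyperplane, and maximality of $\I$ (the ``jump up'' of $\nu$ at both ends) is exactly what rules that out.
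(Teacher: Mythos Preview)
The paper does not prove this lemma: it is quoted verbatim as Lemma~7 of \cite{yildiran2009convex} and used as a black box. Your proposal ultimately lands in the same place, explicitly invoking \cite[Lemma~7]{yildiran2009convex}, so in that sense you and the paper agree.

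That said, your reduction to the two endpoints is a genuine and correct contribution beyond what the paper states: since $\alpha\mapsto M_\alpha$ is affine and restriction to a fixed subspace preserves convex combinations, it really does suffice to find one $w$ with $M_{\alpha_0}|_{w^\perp}\succeq 0$ and $M_{\alpha_1}|_{w^\perp}\succeq 0$. This is a clean observation that the paper does not isolate.

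Where your proposal remains a sketch is in the middle step, the actual existence of a common $w$ for the two endpoints. The continuity argument (``tracking the simple negative eigenvector $v(\alpha)$ continuously across $\I$'' and passing to limits) is not a proof as written: you have not shown the negative eigenvalue stays simple on the interior of $\I$, nor that the limiting normals at the two ends coincide rather than merely lying in the respective admissible cones, nor how the ``coherent orientation'' statement about components of $\mathcal{P}_{\alpha_0}$ and $\mathcal{P}_{\alpha_1}$ converts into a single normal vector. You are right that the Weierstrass--Kronecker canonical form of the symmetric pencil is the clean way to finish this; you correctly identify the obstruction (common isotropic directions) and the role of maximality of $\I$, but you do not carry the argument through. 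Since the paper itself simply cites Yildiran here, your proposal is adequate for the paper's purposes, but if the goal were a self-contained proof, the gap is the existence of the common hyperplane for the two endpoint matrices.
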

	
	Using these results we prove the following proposition about aggregations of homogeneous quadratics with fixed support of size two: if all aggregations contain a given set $S_0$ and one aggregation with one negative eigenvalue contains $\conv(S_0)$ (so that this aggregation is good), then all aggregations in the same interval must also contain $\conv(S_0)$, i.e. all aggregations in the same interval are good. \blue{The main idea is to use the hyperplane $L$ as promised by Lemma \ref{lem:yildiran_hyperplane}, and show that $\conv(S_0)$ must lie entirely in one of the halfspaces separated by $L$, and be disjoint from the other halfspace. }

	\begin{proposition}\label{prop:whole_interval}
		Let $Q_1,Q_2$ be $(n+1)\times (n+1)$ symmetric matrices. Let $S_0\subseteq \R^{n+1}$ and assume $\emptyset\subsetneq \conv(S_0)\subsetneq \R^{n+1}$. Let $\Lambda=\{0\le \alpha\le 1: \nu(\alpha Q_1+(1-\alpha)Q_2)=1 \}$. Suppose $\Lambda\ne\emptyset$ and $S_0\subseteq \{\hx\in\R^{n+1}:\hx^{\top}(\alpha Q_1+(1-\alpha)Q_2)\hx<0 \}$ for all $\alpha\in\Lambda$. 
		
		Based on Theorem \ref{thm:yildiran_intervals} we have $\Lambda=\bigcup_{1\le j\le n_c}\{\I_j\}$, $n_c\in\{1,2\}$, where each $\I_j$ is a closed interval of $[0,1]$ and $\I_j,\I_k$ are disjoint if $j\ne k$. Fix $1\le j\le n_c$ and assume $$\conv(S_0)\subseteq \{\hx\in\R^{n+1}:\hx^{\top}(\alpha_0 Q_1+(1-\alpha_0)Q_2)\hx<0 \}$$ for some $\alpha_0\in \I_j$. Then $$\conv(S_0)\subseteq \{\hx\in\R^{n+1}:\hx^{\top}(\alpha Q_1+(1-\alpha)Q_2)\hx<0 \}$$ for all $\alpha\in \I_j$. 
	\end{proposition}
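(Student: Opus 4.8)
The plan is to exploit the single \emph{interval-wide} separating hyperplane provided by Lemma~\ref{lem:yildiran_hyperplane}, together with the semi-convex-cone structure of Theorem~\ref{thm:SCC}. Write $P_\alpha = \alpha Q_1 + (1-\alpha)Q_2$ and $\mathcal{P}_\alpha = \{x\in\R^{n+1} : x^\top P_\alpha x < 0\}$. For every $\alpha\in\I_j$ we have $\nu(P_\alpha)=1$ (since $\I_j\subseteq\Lambda$), and $\mathcal{P}_\alpha\neq\emptyset$ because $S_0\subseteq\mathcal{P}_\alpha$ and $S_0\neq\emptyset$. Applying Lemma~\ref{lem:yildiran_hyperplane} to the interval $\I_j$ yields a fixed linear hyperplane $L=\{x:\ell^\top x=0\}$ that misses $\mathcal{P}_\alpha$ for \emph{all} $\alpha\in\I_j$; write $L^+=\{\ell^\top x>0\}$ and $L^-=\{\ell^\top x<0\}$ for its two open sides.

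First I would analyze the position of $\mathcal{P}_\alpha$ relative to $L$. By Theorem~\ref{thm:SCC}, $\mathcal{P}_\alpha = C\sqcup(-C)$ for some open convex cone $C$ disjoint from $-C$; since $\mathcal{P}_\alpha$ avoids $L$, each of the connected sets $C$ and $-C$ lies entirely in $L^+$ or entirely in $L^-$, and since $-L^+=L^-$, exactly one of them lies in $L^+$ and the other in $L^-$. Hence $\mathcal{P}_\alpha\cap L^+$ equals a single open convex cone (possibly empty), so in particular it is convex, for every $\alpha\in\I_j$. Next I would pin down the side on which $S_0$ lives, using the hypothesis at $\alpha_0$: the set $\conv(S_0)$ is connected, does not contain the origin (as $0\notin\mathcal{P}_{\alpha_0}$), and is contained in $\mathcal{P}_{\alpha_0}=C\sqcup(-C)$, so it lies in one of the two components; after replacing $\ell$ by $-\ell$ if necessary we may assume that component is contained in $L^+$, whence $S_0\subseteq\conv(S_0)\subseteq L^+$.

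To finish, fix an arbitrary $\alpha\in\I_j$. The hypothesis of the proposition gives $S_0\subseteq\mathcal{P}_\alpha$, and combined with $S_0\subseteq L^+$ we obtain $S_0\subseteq\mathcal{P}_\alpha\cap L^+$, which is convex by the previous step; therefore $\conv(S_0)\subseteq\mathcal{P}_\alpha\cap L^+\subseteq\mathcal{P}_\alpha$, which is exactly the claim. The one genuinely delicate point is the reflection argument: one must verify that the two convex cones composing $\mathcal{P}_\alpha$ sit on \emph{opposite} sides of the fixed hyperplane $L$ (not both on the same side), since this is precisely what turns the interval-wide hyperplane of Lemma~\ref{lem:yildiran_hyperplane} into interval-wide convexity of the half $\mathcal{P}_\alpha\cap L^+$ that actually contains $S_0$. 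The remaining ingredients---nonemptiness of $\mathcal{P}_\alpha$, the fact that $0\notin\conv(S_0)$, and that a connected set contained in a disjoint union of two open sets lies in one of them---are routine.
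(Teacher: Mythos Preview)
Your proof is correct and follows essentially the same approach as the paper: both arguments hinge on the interval-wide separating hyperplane $L$ from Lemma~\ref{lem:yildiran_hyperplane} and the fact that, for every $\alpha\in\I_j$, the two convex cones comprising $\mathcal{P}_\alpha$ lie on opposite sides of $L$, so that the hypothesis at $\alpha_0$ pins $S_0$ to one side once and for all. The only cosmetic difference is that the paper argues by contradiction (supposing $\conv(S_0)$ meets both components at some $\alpha_1$ and deriving that $S_0$ straddles $L$, contradicting the situation at $\alpha_0$), whereas you give the direct version by showing $\mathcal{P}_\alpha\cap L^+$ is convex and contains $S_0$.
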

	
	\blue{
		
		\begin{proof}
			From Theorem \ref{thm:SCC}, the set $\{\hx\in\R^{n+1}:\hx^{\top}(\alpha_0 Q_1+(1-\alpha_0)Q_2)\hx<0 \}$ is an SCC, i.e., a union of two disjoint open convex cones that are symmetric reflections of each other across the origin. Let $\{\hx\in\R^{n+1}:\hx^{\top}(\alpha_0 Q_1+(1-\alpha_0)Q_2)\hx<0 \}=C_{\alpha_0}^+\cup C_{\alpha_0}^-$, where $C_{\alpha_0}^+, C_{\alpha_0}^-$ are the disjoint open convex cones. Since $\conv(S_0)\subseteq \{\hx\in\R^{n+1}:\hx^{\top}(\alpha_0 Q_1+(1-\alpha_0)Q_2)\hx<0 \}$, it is fully contained in one of the convex cones and disjoint from the other. Upon relabeling we may assume $\conv(S_0)\subseteq C_{\alpha_0}^+$ and $\conv(S_0)\cap C_{\alpha_0}^-=\emptyset$. 
			
			From Lemma \ref{lem:yildiran_hyperplane}, there exists linear hyperplane $L$ that does not intersect $\{\hx\in\R^{n+1}:\hx^{\top}(\alpha Q_1+(1-\alpha)Q_2)\hx<0 \}$ for all $
			\alpha\in\I_j$. Let $L^+,L^-$ be the two open halfspaces separated by $L$. Upon relabeling we may assume $C_{\alpha_0}^+\subseteq L^+,C_{\alpha_0}^-\subseteq L^-$. 
			
			Now fix arbitrary $\alpha'\in \I_j$. The set $\{\hx\in\R^{n+1}:\hx^{\top}(\alpha' Q_1+(1-\alpha')Q_2)\hx<0 \}$ is an SCC. Let $\{\hx\in\R^{n+1}:\hx^{\top}(\alpha' Q_1+(1-\alpha')Q_2)\hx<0 \}=C_{\alpha'}^+\cup C_{\alpha'}^-$, where $C_{\alpha'}^+, C_{\alpha'}^-$ are the disjoint open convex cones. Note $L\cap \{\hx\in\R^{n+1}:\hx^{\top}(\alpha' Q_1+(1-\alpha')Q_2)\hx<0 \}=\emptyset$, which means each of $C_{\alpha'}^+, C_{\alpha'}^-$ is fully contained in one of the open halfspaces separated by $L$. Upon relabeling we may assume $C_{\alpha'}^+\subseteq L^+,C_{\alpha'}^-\subseteq L^-$. 
			
			Since $\conv(S_0)\subseteq C_{\alpha_0}^+\subseteq L^+$ and $L^+\cap L^-=\emptyset$, we have $\conv(S_0)\cap L^-=\emptyset$ and hence $S_0\cap L^-=\emptyset$. Therefore $S_0\cap C_{\alpha'}^-=\emptyset$ as $C_{\alpha'}^-\subseteq L^-$, and $S_0\subseteq C_{\alpha'}^+$. Since $C_{\alpha'}^+$ is convex, we have $\conv(S_0)\subseteq C_{\alpha'}^+\subseteq \{\hx\in\R^{n+1}:\hx^{\top}(\alpha' Q_1+(1-\alpha')Q_2)\hx<0 \}$ as desired. 
		\end{proof}
		
	}
	

	Recall that $S=\{x\in\R^n: x^{\top}A_i x+2b_i^{\top}x+c_i<0,i\in [m] \}$ and $\Omega$ is the set of good aggregations which have at most one negative eigenvalue and contain $\conv(S)$. Proposition \ref{prop:whole_interval} implies the following result about pairwise aggregations that contain the convex hull.

	\begin{proposition}\label{prop:pairwise_endpoints}
		Assume $S\ne\emptyset$, $\conv(S)\ne\R^n$ and hidden hyperplane convexity holds for the associated quadratic map $\hf$, so that Theorem \ref{thm:more_quadratics} holds and $\conv(S)=\bigcap_{\lambda\in\Omega}S_{\lambda}$. Fix $i,j\in [m]$ and let $$\Omega_{ij}=\{\lambda\in\Omega: \lambda_k=0, \forall k\notin \{i,j\} \}$$ be aggregations in $\Omega$ that have support in $\{i,j\}$. Then either $\Omega_{ij}=\emptyset$, or there exists $\lambda',\lambda''\in \Omega_{ij}$ such that  $\bigcap_{\lambda\in\Omega_{ij}}S_{\lambda}=S_{\lambda'}\cap S_{\lambda''}$, where $\lambda',\lambda''$ can be written as $\lambda'=\alpha'e_i+(1-\alpha')e_j,\lambda''=\alpha''e_i+(1-\alpha'')e_j$, where $\alpha',\alpha''$ are roots of $\det (\alpha Q_i+(1-\alpha)Q_j)=0$. 
	\end{proposition}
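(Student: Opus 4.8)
The plan is to reparametrize $\Omega_{ij}$ by a single scalar and then import the structure theory of two quadratics. Because $S_\lambda$, the eigenvalue signature of $Q_\lambda$, and the condition $\conv(S)\subseteq S_\lambda$ are all unchanged when $\lambda$ is replaced by a positive multiple, every $\lambda\in\Omega_{ij}$ is a positive multiple of a unique $\lambda(\alpha):=\alpha e_i+(1-\alpha)e_j$ with $\alpha\in[0,1]$, and $S_\lambda=S_{\lambda(\alpha)}$. So it is enough to analyze the scalar set $A:=\{\alpha\in[0,1]:\lambda(\alpha)\in\Omega_{ij}\}$, show that it is a nonempty finite union of closed intervals whose extreme points $\alpha':=\min A$, $\alpha'':=\max A$ are roots of $\det(\alpha Q_i+(1-\alpha)Q_j)=0$, and then prove $\bigcap_{\alpha\in A}S_{\lambda(\alpha)}=S_{\lambda(\alpha')}\cap S_{\lambda(\alpha'')}$.

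First I would locate $A$ inside $\Lambda:=\{0\le\alpha\le1:\nu(\alpha Q_i+(1-\alpha)Q_j)=1\}$: if $\alpha\in A$, then $Q_{\lambda(\alpha)}=\alpha Q_i+(1-\alpha)Q_j$ has at most one negative eigenvalue by definition of $\Omega$, and it is not positive semidefinite since $\emptyset\ne S\subseteq S_{\lambda(\alpha)}$, hence $\nu(Q_{\lambda(\alpha)})=1$ and $\alpha\in\Lambda$. If $A=\emptyset$ we are in the case $\Omega_{ij}=\emptyset$ and there is nothing to prove; otherwise $\Lambda\ne\emptyset$ and Theorem~\ref{thm:yildiran_intervals} gives $\Lambda=\bigcup_{1\le l\le n_c}\I_l$, a disjoint union of $n_c\le2$ closed intervals whose endpoints are roots of $\det(\alpha Q_i+(1-\alpha)Q_j)=0$.

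The heart of the proof is to upgrade ``$A$ meets $\I_l$'' to ``$\I_l\subseteq A$'', which I would do with Proposition~\ref{prop:whole_interval} applied to $Q_i,Q_j$ and $S_0:=S\times\{1\}$. Its hypotheses hold: $\conv(S_0)=\conv(S)\times\{1\}$ is nonempty (as $S\ne\emptyset$) and properly contained in $\R^{n+1}$ (it lies in $\{x_{n+1}=1\}$); for every $\alpha\in\Lambda\subseteq[0,1]$ the vector $\lambda(\alpha)$ is nonnegative and nonzero, so $S\subseteq S_{\lambda(\alpha)}$, which says exactly $S_0\subseteq\{\hat x:\hat x^\top(\alpha Q_i+(1-\alpha)Q_j)\hat x<0\}$; and choosing any $\alpha_0\in A\cap\I_l$, the membership $\lambda(\alpha_0)\in\Omega$ gives $\conv(S)\subseteq S_{\lambda(\alpha_0)}$, i.e. $\conv(S_0)\subseteq\{\hat x:\hat x^\top(\alpha_0 Q_i+(1-\alpha_0)Q_j)\hat x<0\}$. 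Proposition~\ref{prop:whole_interval} then yields $\conv(S)\subseteq S_{\lambda(\alpha)}$ for every $\alpha\in\I_l$, and since $\nu(Q_{\lambda(\alpha)})=1$ (as $\I_l\subseteq\Lambda$) and $\lambda(\alpha)\ge0$ is nonzero with support in $\{i,j\}$, we get $\lambda(\alpha)\in\Omega_{ij}$, i.e. $\I_l\subseteq A$. Hence $A=\bigcup_{l\in J}\I_l$ with $J:=\{l:A\cap\I_l\ne\emptyset\}\ne\emptyset$, $|J|\le2$; in particular $\alpha':=\min A$ and $\alpha'':=\max A$ exist, lie in $A$, and are endpoints of the $\I_l$, hence roots of $\det(\alpha Q_i+(1-\alpha)Q_j)=0$.

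Finally I would verify $\bigcap_{\alpha\in A}S_{\lambda(\alpha)}=S_{\lambda(\alpha')}\cap S_{\lambda(\alpha'')}$. The inclusion ``$\subseteq$'' is immediate because $\alpha',\alpha''\in A$. For ``$\supseteq$'', every $\alpha\in A$ satisfies $\alpha'\le\alpha\le\alpha''$, so $\alpha=t\alpha'+(1-t)\alpha''$ for some $t\in[0,1]$; as $\lambda\mapsto F_\lambda$ is linear and $\alpha\mapsto\lambda(\alpha)$ is affine, $F_{\lambda(\alpha)}=tF_{\lambda(\alpha')}+(1-t)F_{\lambda(\alpha'')}$ pointwise, so any $x$ with $F_{\lambda(\alpha')}(x)<0$ and $F_{\lambda(\alpha'')}(x)<0$ also satisfies $F_{\lambda(\alpha)}(x)<0$. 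Setting $\lambda'=\alpha'e_i+(1-\alpha')e_j$ and $\lambda''=\alpha''e_i+(1-\alpha'')e_j$ (both in $\Omega_{ij}$) finishes the argument. I expect the main obstacle to be the bookkeeping around Proposition~\ref{prop:whole_interval}: one must use the lifted set $S\times\{1\}$ rather than the full homogenization $S^h$ (whose convex hull meets both of the two disjoint convex cones making up a semi-convex cone, so it never lies inside a single aggregation), and carefully pass the containment hypotheses back and forth between the affine statement $x\in S_{\lambda(\alpha)}$ and its homogeneous form $\hat x^\top(\alpha Q_i+(1-\alpha)Q_j)\hat x<0$.
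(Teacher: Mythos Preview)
Your proposal is correct and follows essentially the same approach as the paper's proof: reparametrize by $\alpha\in[0,1]$, invoke Theorem~\ref{thm:yildiran_intervals} to get the interval structure of $\Lambda$, apply Proposition~\ref{prop:whole_interval} with $S_0=S\times\{1\}$ to show each interval is either contained in or disjoint from $A$, and finish by writing every $\lambda(\alpha)$ as a nonnegative combination of the two outermost endpoints. You are in fact more careful than the paper in checking the hypotheses of Proposition~\ref{prop:whole_interval} and in stating the final inclusion $S_{\lambda(\alpha')}\cap S_{\lambda(\alpha'')}\subseteq S_{\lambda(\alpha)}$ in the correct direction.
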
 
	
	\begin{proof}
		\blue{Throughout this proof, we reparametrize aggregations with support $\{i,j\}$ by the unit length interval $[0,1]$, and view each $0\le\alpha\le 1$ as the aggregation $\alpha e_i+(1-\alpha) e_j$.}
		
		Assume $\Omega_{ij}\ne\emptyset$. Clearly $\bigcap_{\lambda\in\Omega_{ij}}S_{\lambda}\subseteq S_{\lambda'}\cap S_{\lambda''}$ since $\lambda',\lambda''\in \Omega_{ij}$. For reverse inclusion, from Theorem \ref{thm:yildiran_intervals}, $\{\lambda: \lambda_k=0,\forall k\notin \{i,j\}\}$ where $Q_{\lambda}$ has at most one negative eigenvalues forms one or two closed intervals \blue{(contained in the reparametrized $[0,1]$ interval)}. Furthermore, each interval either lies entirely in $\Omega_{ij}$ or is disjoint from it, by applying Proposition \ref{prop:whole_interval} to $S_0=\{(x,1)\in\R^{n+1}:x\in S\}$ and observing that $S_0\subseteq S_{\lambda}\blue{\times \{1\}}$ for all nonzero $\lambda\ge 0$.  
		Thus $\Omega_{ij}$ is also one or two closed intervals whose endpoints are GEVs of $Q_i$ and $Q_j$. We let $\lambda',\lambda''$ be the two outermost endpoints of $\Omega_{ij}$, and observe that any $\lambda\in\Omega_{ij}$ is a nonnegative combination of $\lambda'$ and $\lambda''$ and therefore \blue{$S_{\lambda'}\cap S_{\lambda''}\subseteq S_{\lambda}$}, which means $S_{\lambda'}\cap S_{\lambda''}\subseteq \bigcap_{\lambda\in\Omega_{ij}}S_{\lambda}$. 
	\end{proof}
	
	Proposition~\ref{prop:2aggregation} and Proposition ~\ref{prop:pairwise_endpoints} together imply Theorem~\ref{thm:finite}. 
	
	\begin{proof}[Proof of Theorem~\ref{thm:finite}]
		Let $\Omega_2=\{\lambda\in\Omega_1: |\{i:\lambda_i>0\}|\le 2 \}$. Then from Proposition~\ref{prop:2aggregation}, $\bigcap_{\lambda\in\Omega_1}S_{\lambda}=\bigcap_{\lambda\in\Omega_2}S_{\lambda}$. Now $\Omega_2=\bigcup_{i\ne j}\Omega_{ij}\setminus\{\lambda\in\Omega:S_{\lambda}=\R^n \}$. Therefore Theorem~\ref{thm:finite} follows after applying Proposition \ref{prop:pairwise_endpoints} to all $\{i,j\}\subseteq [m]$ and removing any $\lambda'$ where $S_{\lambda'}=\R^n$, which does not change the convex hull. 
	\end{proof}

	\section{Results for closed inequalities}\label{sec:closed}

	\begin{proof}[Proof of Theorem \ref{prop:agg_interior}]
		Recall that $T=\{x:f_i(x)\le 0,i\in [m]\}$ is the set defined by closed inequalities and $G=\textup{int}(\overline{\conv(T)})$.  We remind the reader that $S \subseteq G$ and we assume $G \neq \emptyset$. 
		
		Take any $y\notin G$. Since $G$ is convex and open, there exists $\alpha\in\R^n$ such that $G\subseteq \{x:\alpha^{\top}x>\alpha^{\top}y\}$. Let $H=\{x: \alpha^{\top}x=\alpha^{\top}y\}\subseteq\R^n$ be the separating affine hyperplane and $\hH=\{(x,x_{n+1}):\alpha^{\top}x=(\alpha^{\top} y)x_{n+1}\}\subseteq \R^{n+1}$ be its homogenization. Let $\hH_+=\{(x,x_{n+1}):\alpha^{\top}x \ge(\alpha^{\top} y)x_{n+1}\},\hH_-=\{(x,x_{n+1}):\alpha^{\top}x \le (\alpha^{\top} y)x_{n+1}\}$ be the closed halfspaces created by $\hH$. Then we have $\overline{\conv(T)}\times \{1\} \subseteq \hH_+$. 
		
		Note that Lemma \ref{lem:homog_separation} still applies since $\conv(S) \neq \mathbb{R}^n$. Thus, since $y \notin S$ (because $S \subseteq G$) we have that $S^h \cap  \hat{H} = \emptyset$, i.e., 
		$\{\hx\in\hH: \hx^{\top}Q_i\hx<0, i \in [m]\}=\emptyset$. 
		Proceeding in the same way \blue{as} in the proof \blue{of} Theorem~\ref{thm:more_quadratics}, we use the fact that $Q_i$\blue{'s} satisfy hidden hyperplane convexity, to show that there exists a nonzero vector $\lambda\in\R_+^m$ such that $Q_{\lambda}$ is PSD on $\hH$. By our assumption we also have $Q_{\lambda}\ne 0$.

		
		By the Interlacing Theorem, $Q_{\lambda}$ has at most one negative eigenvalue, and it cannot be PSD otherwise $T\times \{1\} \subseteq \ker Q_{\lambda}$, which would imply $G=\emptyset$. This means $Q_{\lambda}$ has exactly one negative eigenvalue. Therefore $S_{\lambda}$ consists of one or two disjoint open convex sets. Let $T_{\lambda}=\{x:\sum_{i=1}^m \lambda_i f_i(x)\le 0\}$ denote the set defined by the same aggregation with closed inequalities. It is clear that $T_{\lambda}$ is closed, $T\subseteq T_{\lambda}$ and $S_{\lambda}=\textup{int}( T_{\lambda})$. 
		
		We now show that $\overline{\conv(T)}\subseteq T_{\lambda}$ which then implies that $G\subseteq S_{\lambda}$. If $S_{\lambda}$ is convex then we are done as in this case $T_{\lambda}$ is convex and closed. Suppose $S_{\lambda}$ consists of two disjoint open convex connected components. We write $S_{\lambda}=(S_{\lambda})_+\cup (S_{\lambda})_-$ where $(S_{\lambda})_+ ,(S_{\lambda})_-$ are disjoint open convex sets. Since $Q_{\lambda}$ is PSD on $\hH$, these two sets lie in different sides of $\hH$, and upon relabeling we assume $(S_{\lambda})_+\subseteq \hH_+, (S_{\lambda})_-\subseteq \hH_-$. 
		
		Let $(T_{\lambda})_+= \overline{(S_{\lambda})_+},(T_{\lambda})_-= \overline{(S_{\lambda})_-}$. Then it is clear that $T_{\lambda}=(T_{\lambda})_+\cup (T_{\lambda})_-, (T_{\lambda})_+\cap (T_{\lambda})_-\subseteq \hH$, and $(T_{\lambda})_+\subseteq \hH_+, (T_{\lambda})_-\subseteq \hH_-$. 
		
		Recall $T\times \{1\}\subseteq \hH_+$. We claim this implies $T\times \{1\}\subseteq (T_{\lambda})_+$, which completes the proof as $(T_{\lambda})_+$ is closed and convex. Suppose otherwise, then $(T\times \{1\})\cap ((T_{\lambda})_-\setminus (T_{\lambda})_+)\ne\emptyset$, but $(T_{\lambda})_-\setminus (T_{\lambda})_+\subseteq \hH_-\setminus \hH$ which is disjoint from $\hH_+$. 
	\end{proof}


	
	
	
	
	
	\section{Acknowledgements}
	We would like to thanks Gonzalo Mu\~{n}oz and Felipe Serrano for many useful discussions on this topic. 
	
	\bibliographystyle{plain}
	\bibliography{references}

\begin{thebibliography}{10}

\bibitem{bienstock2014polynomial}
Daniel Bienstock and Alexander Michalka.
\newblock Polynomial solvability of variants of the trust-region subproblem.
\newblock In {\em Proceedings of the Twenty-Fifth Annual ACM-SIAM Symposium on
  Discrete Algorithms}, pages 380--390. SIAM, 2014.

\bibitem{bodur2018aggregation}
Merve Bodur, Alberto Del~Pia, Santanu~S Dey, Marco Molinaro, and Sebastian
  Pokutta.
\newblock Aggregation-based cutting-planes for packing and covering integer
  programs.
\newblock {\em Mathematical Programming}, 171(1):331--359, 2018.

\bibitem{brickman}
Louis Brickman.
\newblock On the field of values of a matrix.
\newblock {\em Proceedings of the American Mathematical Society}, 12(1):61--66,
  1961.

\bibitem{Burer2016}
S.~Burer and F.~K{\i}l{\i}n{\c{c}}-Karzan.
\newblock How to convexify the intersection of a second order cone and a
  nonconvex quadratic.
\newblock {\em Mathematical Programming}, pages 1--37, 2016.

\bibitem{burer2020exact}
Samuel Burer and Yinyu Ye.
\newblock Exact semidefinite formulations for a class of (random and
  non-random) nonconvex quadratic programs.
\newblock {\em Mathematical Programming}, 181(1):1--17, 2020.

\bibitem{calabi1982linear}
Eugenio Calabi.
\newblock Linear systems of real quadratic forms. ii.
\newblock {\em Proceedings of the American Mathematical Society},
  84(3):331--334, 1982.

\bibitem{dey2019convexifications}
Santanu~S Dey, Burak Kocuk, and Asteroide Santana.
\newblock Convexifications of rank-one-based substructures in qcqps and
  applications to the pooling problem.
\newblock {\em Journal of Global Optimization}, pages 1--46, 2019.

\bibitem{dey2021obtaining}
Santanu~S Dey, Gonzalo Munoz, and Felipe Serrano.
\newblock On obtaining the convex hull of quadratic inequalities via
  aggregations.
\newblock {\em arXiv preprint arXiv:2106.12629}, 2021.

\bibitem{dines}
Lloyd~L. Dines.
\newblock On the mapping of quadratic forms.
\newblock {\em Bull. Amer. Math. Soc.}, 47(11):494--498, 1941.

\bibitem{gleixner2020generalized}
Ambros Gleixner, Andrea Lodi, and Felipe Serrano.
\newblock On generalized surrogate duality in mixed-integer nonlinear
  programming.
\newblock In {\em Integer Programming and Combinatorial Optimization: 21st
  International Conference, IPCO 2020, London, UK, June 8--10, 2020,
  Proceedings}, volume 12125, page 322. Springer Nature, 2020.

\bibitem{hiriart2002backnforth}
J-B Hiriart-Urruty and Mounir Torki.
\newblock Permanently going back and forth between the``quadratic world'' and
  the``convexity world'' in optimization.
\newblock {\em Applied Mathematics and Optimization}, 45(2):169--184, 2002.

\bibitem{ho2017second}
Nam Ho-Nguyen and Fatma Kilinc-Karzan.
\newblock A second-order cone based approach for solving the trust-region
  subproblem and its variants.
\newblock {\em SIAM Journal on Optimization}, 27(3):1485--1512, 2017.

\bibitem{Horn1985matrix}
Roger Horn and Charles Johnson.
\newblock {\em Matrix analysis}.
\newblock Cambridge University Press, 1985.

\bibitem{ModaresiV2015}
S.~Modaresi and J.~P. Vielma.
\newblock Convex hull of two quadratic or a conic quadratic and a quadratic
  inequality.
\newblock {\em http://www.optimization-online.org/DBHTML/2014/11/4641.html},
  2014.

\bibitem{polik2007survey}
Imre P{\'o}lik and Tam{\'a}s Terlaky.
\newblock A survey of the s-lemma.
\newblock {\em SIAM review}, 49(3):371--418, 2007.

\bibitem{polyak1998}
BT~Polyak.
\newblock Convexity of quadratic transformations and its use in control and
  optimization.
\newblock {\em Journal of Optimization Theory and Applications}, 99:553--583,
  1998.

\bibitem{asterquad}
Asteroide Santana and Santanu~S. Dey.
\newblock The convex hull of a quadratic constraint over a polytope.
\newblock {\em SIAM Journal on Optimization}, 30(4):2983--2997, 2020.

\bibitem{schrijver1998theory}
Alexander Schrijver.
\newblock {\em Theory of linear and integer programming}.
\newblock John Wiley \& Sons, 1998.

\bibitem{sheriff2013convexity}
Jamin~Lebbe Sheriff.
\newblock {\em The Convexity of Quadratic Maps and the Controllability of
  Coupled Systems}.
\newblock PhD thesis, Harvard University, 2013.

\bibitem{yakubovich1977}
VA~Yakubovich.
\newblock S-procedure in nonlinear control theory.
\newblock {\em Vestnick Leningrad Univ. Math.}, 4:73--93, 1977.

\bibitem{yang2018quadratic}
Boshi Yang, Kurt Anstreicher, and Samuel Burer.
\newblock Quadratic programs with hollows.
\newblock {\em Mathematical Programming}, 170(2):541--553, 2018.

\bibitem{yildiran2009convex}
U{\u{g}}ur Yildiran.
\newblock Convex hull of two quadratic constraints is an lmi set.
\newblock {\em IMA Journal of Mathematical Control and Information},
  26(4):417--450, 2009.

\end{thebibliography}

\end{document}